\newtheorem{theorem}{Theorem}[section]
\newtheorem{lemma}[theorem]{Lemma}
\theoremstyle{definition}
\theoremstyle{remark}
\newtheorem{remark}[theorem]{Remark}
\numberwithin{equation}{section}
\newcommand{\spt}[1]{\mbox{\normalfont spt}\Parans{#1}}
\newcommand{\Parans}[1]{\left(#1\right)}
\newcommand{\CBrackets}[1]{\left\{#1\right\}}
\newcommand{\SBrackets}[1]{\left[#1\right]}
\newcommand{\aqprod}[3]{\Parans{#1;#2}_{#3}}
\newcommand{\jacprod}[2]{\SBrackets{#1;#2}_{\infty}}
\newcommand{\Jac}[2]{\left(\frac{#1}{#2}\right)}
\newcommand{\GEta}[3]{\eta_{#1,#2}\Parans{#3}}
\newcommand{\SSeries}[3]{S\Parans{#1,#2,#3}}
\author{CHRIS JENNINGS-SHAFFER}
\address{Department of Mathematics, University of Florida\\
Gainesville, Florida 32611, USA
\endgraf cjenningsshaffer@ufl.edu}
\keywords{Number theory, partitions, partition pairs, smallest parts function, congruences,
ranks, cranks, Bailey pairs, Bailey's Lemma}
\subjclass[2010]{Primary 11P81, 11P82, 11P83, 05A17}
\title{Exotic Bailey-Slater SPT-Functions I: Group A}
\begin{document}

\allowdisplaybreaks

\begin{abstract}
We introduce several spt-type functions that arise from Bailey pairs.
We prove simple Ramanujan type congruences for these functions 
which can be explained by a spt-crank-type function. 
The spt-crank-type functions are constructed by adding an
extra variable $z$ into the generating
functions. We find dissections when $z$ is a certain root of unity, 
as has been done for many rank and crank
difference formulas of various partition type objects. Our formulas require
an identity of Chan \cite{Chan} on generalized Lambert series. 
\end{abstract}

\maketitle

\section{Introduction and statement of Results}
\allowdisplaybreaks

We recall a partition of a positive integer $n$ is a non-increasing sequence 
of positive integers that sum to $n$, we let $p(n)$ denote the number of 
partitions of $n$. For example $p(3)=3$ since the partitions of $3$ are
just $3$, $2+1$, and $1+1+1$. In \cite{Andrews} Andrews introduced
a weighted count on the partitions of $n$ given by counting each partition of
$n$ by the number of times the smallest part occurs. We call this weighted 
count $\spt{n}$ and note $\spt{3}=5$.

In \cite{AGL} Andrews, Garvan, and Liang gave combinatorial refinements of congruences for 
$\spt{n}$ by considering $S(z,q)$, a two variable generalization of the generating 
function of $\spt{n}$. They then applied Bailey's Lemma to recognize $S(z,q)$ as the
difference between the generating functions for the rank and crank of 
ordinary partitions. Based on information on the rank and crank of a partition,
they were able to deduce results for $\spt{n}$.

This method was used again by Garvan and the author in \cite{GarvanJennings} to
give combinatorial refinements and prove new congruences for the number of 
smallest parts in the overpartitions of $n$, the number of smallest parts in 
the overpartitions of $n$ with smallest part even, the number of smallest parts in 
the overpartitions of $n$ with smallest part odd, and the number of smallest parts in 
the partitions of $n$ with smallest part even and distinct odd parts.
The process has two key steps. The first is to use a Bailey pair and Bailey's 
lemma to see the difference of a rank and crank function, the second is to
use dissection formulas for the rank and crank function to deduce congruences.
In the cases of \cite{GarvanJennings}, the rank functions had been previously 
considered by Lovejoy and Osburn in \cite{LO1} and \cite{LO2}.

Here we first look to Bailey pairs to get the two variable generating
function and from that deduce what are the partition functions, rather than
starting with a partition function and trying to find a related Bailey pair.
We apply Bailey's Lemma to the two variable generalizations of each partition function
to get the difference of a series we can dissect and the generating
function for the crank of ordinary partitions. We then dissect the series
at roots of unity using methods similar to those used by Atkin and 
Swinnerton-Dyer in \cite{AS} for the rank of partitions. This method has also
been used by Ekin in studying the crank of partitions \cite{Ekin},
Lewis and Santa-Gadea in studying th rank and crank of partitions 
\cite{LewisSantaGadea}, and
by Lovejoy and Osburn in studying the rank of overpartitions \cite{LO1},
the $M_2$-rank of overpartitions \cite{LO3}, and the $M_2$-rank of partitions
without repeated odd parts \cite{LO2}.

We use the product notation
\begin{align*}
	&\aqprod{z}{q}{n} = \prod_{j=0}^{n-1} (1-zq^j),
	\hspace{20pt}
	\aqprod{z}{q}{\infty} = \prod_{j=0}^\infty (1-zq^j),
	\hspace{20pt}
	\jacprod{z}{q} = \aqprod{z,q/z}{q}{\infty},	
	\\
	&\aqprod{z_1,\dots,z_k}{q}{n} = \aqprod{z_1}{q}{n}\dots\aqprod{z_k}{q}{n},
	\hspace{20pt}
	\aqprod{z_1,\dots,z_k}{q}{\infty} = \aqprod{z_1}{q}{\infty}\dots\aqprod{z_k}{q}{\infty},
	\\
	&\jacprod{z_1,\dots,z_k}{q} = \jacprod{z_1}{q}\dots\jacprod{z_k}{q}
.
\end{align*}

We recall a pair of sequences $(\alpha,\beta)$ is a Bailey pair relative to 
$(a,q)$ if
\begin{align*}
	\beta_n = \sum_{k=0}^n \frac{\alpha_k}{\aqprod{q}{q}{n-k}\aqprod{aq}{q}{n+k}}
.
\end{align*}
A limiting case of Bailey's Lemma gives that if $(\alpha,\beta)$ is a Bailey 
pair relative to $(a,q)$ then
\begin{align*}
	\sum_{n=0}^\infty \aqprod{\rho_1,\rho_2}{q}{n} 
		\left(\frac{aq}{\rho_1\rho_2} \right)^n \beta_n
	&=
	\frac{\aqprod{aq/\rho_1,aq/\rho_2}{q}{\infty}}{\aqprod{aq,aq/\rho_1\rho_2}{q}{\infty}}
	\sum_{n=0}^\infty \frac{
		\aqprod{\rho_1,\rho_2}{q}{n} 
		\left(\frac{aq}{\rho_1\rho_2} \right)^n \alpha_n
		}{\aqprod{aq/\rho_1,aq/\rho_2}{q}{n}}	
.  
\end{align*}

We start with four Bailey pairs. Each $(\beta^i,\alpha^i)$ is a Bailey pairs relative to $(1,q)$,
these are out of group A of \cite{Slater}. For each $i$,
$\beta^i_0 = \alpha^i_0 = 1$ and for $n\ge 1$ the $\alpha^i_n$
and $\beta^i_n$ are defined by
$$\renewcommand{\arraystretch}{2}
\begin{array}{lll}
	\beta^1_n = \frac{1}{\aqprod{q}{q}{2n}},	
	\hspace{20pt}
	&\alpha^1_{3n} =  q^{6n^2-n} + q^{6n^2+n},
	\hspace{20pt}
	&\alpha^1_{3n\pm 1} = -q^{6n^2\pm 5n + 1}
	,\\
	\beta^2_n = \frac{q^n}{\aqprod{q}{q}{2n}},	
	&\alpha^2_{3n} =  q^{6n^2-2n} + q^{6n^2+2n},
	&\alpha^2_{3n\pm 1} = -q^{6n^2\pm 2n}
	,\\
	\beta^3_n = \frac{q^{n^2}}{\aqprod{q}{q}{2n}},	
	&\alpha^3_{3n} =  q^{3n^2-n} + q^{3n^2+n},
	&\alpha^3_{3n\pm 1} = -q^{3n^2\pm n}
	,\\
	\beta^4_n = \frac{q^{n^2-n}}{\aqprod{q}{q}{2n}},	
	&\alpha^4_{3n} =  q^{3n^2-2n} + q^{3n^2+2n},
	&\alpha^4_{3n\pm 1} = -q^{3n^2\pm 4n+1}
.
\end{array}
$$

For each $\beta^i$ we define a corresponding series,
\begin{align*}
	PP_1 (z,q)
	&=
	\frac{\aqprod{q}{q}{\infty}}{\aqprod{z,z^{-1}}{q}{\infty}}
	\sum_{n=0}^\infty \aqprod{z,z^{-1}}{q}{n} \beta_n^1 q^n
	-
	\frac{\aqprod{q}{q}{\infty}}{\aqprod{z,z^{-1}}{q}{\infty}}
	=
	\sum_{n=1}^\infty \frac{q^n\aqprod{q^{2n+1}}{q}{\infty}}
	{\aqprod{zq^n,z^{-1}q^n}{q}{\infty}}
	,\\			
	PP_2 (z,q)
	&=
	\frac{\aqprod{q}{q}{\infty}}{\aqprod{z,z^{-1}}{q}{\infty}}
	\sum_{n=0}^\infty \aqprod{z,z^{-1}}{q}{n} \beta_n^2 q^n
	-
	\frac{\aqprod{q}{q}{\infty}}{\aqprod{z,z^{-1}}{q}{\infty}}
	=
	\sum_{n=1}^\infty \frac{q^{2n}\aqprod{q^{2n+1}}{q}{\infty}}
	{\aqprod{zq^n,z^{-1}q^n}{q}{\infty}}
	,\\			
	PP_3 (z,q)
	&=
	\frac{\aqprod{q}{q}{\infty}}{\aqprod{z,z^{-1}}{q}{\infty}}
	\sum_{n=0}^\infty \aqprod{z,z^{-1}}{q}{n} \beta_n^3 q^n
	-
	\frac{\aqprod{q}{q}{\infty}}{\aqprod{z,z^{-1}}{q}{\infty}}
	=
	\sum_{n=1}^\infty \frac{q^{n^2+n}\aqprod{q^{2n+1}}{q}{\infty}}
	{\aqprod{zq^n,z^{-1}q^n}{q}{\infty}}
	,\\			
	PP_4 (z,q)
	&=
	\frac{\aqprod{q}{q}{\infty}}{\aqprod{z,z^{-1}}{q}{\infty}}
	\sum_{n=0}^\infty \aqprod{z,z^{-1}}{q}{n} \beta_n^4 q^n
	-
	\frac{\aqprod{q}{q}{\infty}}{\aqprod{z,z^{-1}}{q}{\infty}}
	=
	\sum_{n=1}^\infty \frac{q^{n^2}\aqprod{q^{2n+1}}{q}{\infty}}
	{\aqprod{zq^n,z^{-1}q^n}{q}{\infty}}
.
\end{align*}
We then set $z=1$ and simplify to get the series
\begin{align*}
	PP_1 (q)
	&=
	\sum_{n=1}^\infty pp_1(n)q^n
	=
	\sum_{n=1}^\infty \frac{q^n}
	{\aqprod{q^n}{q}{\infty}\aqprod{q^n}{q}{n+1}}
	,\\			
	PP_2 (q)
	&=
	\sum_{n=1}^\infty pp_2(n)q^n
	=
	\sum_{n=1}^\infty \frac{q^{2n}}
	{\aqprod{q^n}{q}{\infty}\aqprod{q^n}{q}{n+1}}
	,\\
	PP_3 (q)
	&=
	\sum_{n=1}^\infty pp_3(n)q^n
	=
	\sum_{n=1}^\infty \frac{q^{n^2+n}}
	{\aqprod{q^n}{q}{\infty}\aqprod{q^n}{q}{n+1}}
	,\\
	PP_4 (q)
	&=
	\sum_{n=1}^\infty pp_4(n)q^n
	=
	\sum_{n=1}^\infty \frac{q^{n^2}}
	{\aqprod{q^n}{q}{\infty}\aqprod{q^n}{q}{n+1}}
.
\end{align*}
We now interpret the $pp_i(n)$ in terms of the smallest parts of certain 
partition pairs and as partition pairs.
For a partition $\pi$ we let $\ell(\pi)$ denote the largest part, $s(\pi)$ the 
smallest part, and $|\pi|$ the sum of the parts. 
We say a pair of partitions $(\pi_1,\pi_2)$ is a partition pair of $n$ if
$|\pi_1|+|\pi_2|=n$.

We note that
\begin{align*}
	\frac{q^n}{(1-q^n)^2}
	&= 
	\sum_{m=1}^\infty m q^{nm}
.
\end{align*}
Thus $\frac{q^n}{(1-q^n)^2\aqprod{q^{n+1}}{q}{\infty}}$ is the generating 
function for the number of occurrences of the smallest part
in partitions with smallest part $n$. 

We see $pp_1(n)$ is the number of partition pairs $(\pi_1,\pi_2)$ of $n$, 
counted by the number of times $s(\pi_1)$ occurs,
where either $\pi_2$ is empty or $ s(\pi_1)< s(\pi_2)$ and 
$\ell(\pi_2)\le 2s(\pi_1)$. 
Alternatively, we can interpret $pp_1(n)$ as the number of
partition pairs $(\pi_1, \pi_2)$ of $n$, with $\pi_2$ allowed to be empty but
if it is not empty then $s(\pi_1)\le s(\pi_2)$ and 
$\ell(\pi_2)\le 2s(\pi_1)$. 

Similarly we see $pp_2(n)$ is the number of partition pairs $(\pi_1,\pi_2)$ of 
$n$ where the smallest part of $\pi_1$ occurs at 
least twice, counted by the number of times $s(\pi_1)$ occurs past the first,
where either $\pi_2$ is empty or $ s(\pi_1)< s(\pi_2)$ and $\ell(\pi_2)\le 2s(\pi_1)$. 
Alternatively, we can interpret $pp_2(n)$ as the number of
partition pairs $(\pi_1, \pi_2)$ where the smallest part of $\pi_1$ occurs at 
least twice, with $\pi_2$ allowed to be empty but if it is not empty  
then $s(\pi_1)\le s(\pi_2)$ and $\ell(\pi_2)\le 2s(\pi_1)$

We see $pp_3(n)$ is the number of partition pairs $(\pi_1,\pi_2)$ of 
$n$ where the smallest part of $\pi_1$ occurs
more than enough times to form a square in the Ferrers diagram,
counted by the number of times $s(\pi_1)$ occurs
past the first $s(\pi_1)$ times,
where either $\pi_2$ is empty or $ s(\pi_1)< s(\pi_2)$ and $\ell(\pi_2)\le 2s(\pi_1)$. 
Alternatively, $pp_3(n)$ is the number of
partition pairs $(\pi_1, \pi_2)$ where the smallest part of $\pi_1$ occurs 
more than enough times to form a square in the Ferrers diagram,
with $\pi_2$ allowed to be empty but if it is not empty  
then $s(\pi_1)\le s(\pi_2)$ and $\ell(\pi_2)\le 2s(\pi_1)$

We see $pp_4(n)$ is the number of partition pairs $(\pi_1,\pi_2)$ of 
$n$ where the smallest part of $\pi_1$ occurs
enough times to at least form a square in the Ferrers diagram, 
counted by the number of times $s(\pi_1)$ occurs
past the first $s(\pi_1)-1$ times,
where either $\pi_2$ is empty or $ s(\pi_1)< s(\pi_2)$ and $\ell(\pi_2)\le 2s(\pi_1)$. 
Alternatively, $pp_4(n)$ is the number of
partition pairs $(\pi_1, \pi_2)$ where the smallest part of $\pi_1$ occurs 
enough times to at least form a square in the Ferrers diagram, 
with $\pi_2$ allowed to be empty but if it is not empty  
then $s(\pi_1)\le s(\pi_2)$ and $\ell(\pi_2)\le 2s(\pi_1)$

There are of course other ways to interpret the series $PP_i(q)$. 
It is the partition pair interpretations that allow us to easily define cranks in the same
fashion of the $\overline{\mbox{crank}}$ defined in Section 3 of 
\cite{GarvanJennings}.

We will prove the following congruences.
\begin{theorem}
For $n\ge 0$,
\begin{align*}
	pp_1(3n) &\equiv 0 \pmod{3}
	,\\
	pp_2(3n+1) &\equiv 0 \pmod{3}
	,\\
	pp_2(5n+1) &\equiv 0 \pmod{5}
	,\\
	pp_3(5n+4) &\equiv 0 \pmod{5}
	,\\
	pp_3(7n+1) &\equiv 0 \pmod{7}
	,\\
	pp_4(5n+4) &\equiv 0 \pmod{5}
.
\end{align*}
\end{theorem}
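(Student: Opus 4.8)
The plan is to attach to each $PP_i$ a spt-crank-type refinement and then read the six congruences off dissections of that refinement at roots of unity, following the scheme of \cite{AGL,GarvanJennings}.

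I would first observe that, regarded as a power series in $q$, every coefficient of $PP_i(z,q)$ is a Laurent polynomial in $z$ with integer coefficients, so $PP_i(z,q)=\sum_{m,n}N_i(m,n)z^m q^n$ with $N_i(m,n)\in\mathbb{Z}$ and $\sum_m N_i(m,n)=pp_i(n)$. Fix a prime $k\in\{3,5,7\}$ and a primitive $k$-th root of unity $\zeta_k$, and set $N_{i,j}(N)=\sum_{m\equiv j\,(k)}N_i(m,N)$. Then the coefficient of $q^{N}$ in $PP_i(\zeta_k,q)$ is $\sum_{j=0}^{k-1}N_{i,j}(N)\zeta_k^{j}$. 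Since $\{1,\zeta_k,\dots,\zeta_k^{k-2}\}$ is a $\mathbb{Z}$-basis of $\mathbb{Z}[\zeta_k]$, divisibility of the coefficient of $q^{kn+r}$ in $PP_i(\zeta_k,q)$ by $k$ in $\mathbb{Z}[\zeta_k]$ forces $N_{i,0}(kn+r)\equiv\dots\equiv N_{i,k-1}(kn+r)\pmod k$, hence $pp_i(kn+r)=\sum_j N_{i,j}(kn+r)\equiv k\,N_{i,0}(kn+r)\equiv 0\pmod k$. (If that coefficient is in fact $0$, one gets the stronger statement that the spt-crank is equidistributed mod $k$.) So for each line of the theorem it suffices to compute the $q^{kn+r}$-component of $PP_i(\zeta_k,q)$ and check that it vanishes; equivalently, that $PP_i(\zeta_k,q)$ admits a $k$-dissection omitting the residue $r$.

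Next I would run the four Bailey pairs through the stated limiting Bailey's Lemma with $a=1$, $\rho_1=z$, $\rho_2=z^{-1}$, so $aq/\rho_1\rho_2=q$. Using $\aqprod{z,z^{-1}}{q}{\infty}=(1-z)(1-z^{-1})\aqprod{qz,q/z}{q}{\infty}$ and $\beta_0^i=\alpha_0^i=1$, this gives
\begin{align*}
	PP_i(z,q)=\frac{-z}{(1-z)^2}\left(\frac{1}{\aqprod{q}{q}{\infty}}\sum_{n=0}^\infty\frac{\aqprod{z,z^{-1}}{q}{n}q^n\alpha_n^i}{\aqprod{q/z,qz}{q}{n}}-\frac{\aqprod{q}{q}{\infty}}{\aqprod{qz,q/z}{q}{\infty}}\right),
\end{align*}
so $PP_i(z,q)$ is a simple prefactor times the difference of an explicit $\alpha^i$-series and the crank generating function of ordinary partitions. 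Specializing $z=\zeta_k$ and using $\aqprod{\zeta_k}{q}{n}/\aqprod{q\zeta_k}{q}{n}=(1-\zeta_k)/(1-\zeta_k q^n)$ collapses the $\alpha^i$-sum: the $n=0$ term is $1$, and for $n\ge1$ the summand is $(1-\zeta_k)(1-\zeta_k^{-1})\,q^n\alpha_n^i/\bigl((1-\zeta_k q^n)(1-\zeta_k^{-1}q^n)\bigr)$. For $k=3$ this equals $3q^n\alpha_n^i(1-q^n)/(1-q^{3n})$ and $\tfrac{-\zeta_3}{(1-\zeta_3)^2}=\tfrac13$, so $PP_i(\zeta_3,q)=\tfrac13\bigl(\aqprod{q}{q}{\infty}^{-1}-\aqprod{q}{q}{\infty}^2\aqprod{q^3}{q^3}{\infty}^{-1}\bigr)+\aqprod{q}{q}{\infty}^{-1}\sum_{n\ge1}q^n\alpha_n^i(1-q^n)/(1-q^{3n})$; for $k=5,7$ one first partial-fractions $\bigl((1-\zeta_k q^n)(1-\zeta_k^{-1}q^n)\bigr)^{-1}$ to reach honest Lambert series $\sum_n q^n\alpha_n^i/(1-\zeta_k^{\pm1}q^n)$. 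Substituting the explicit $\alpha^i_n$ and splitting the index $n$ into the residue classes mod $3$ dictated by the formulas for $\alpha^i_{3n},\alpha^i_{3n\pm1}$ expresses everything through generalized Lambert series with numerators $\pm q^{(\text{quadratic in }n)}$.

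The last step is to dissect these generalized Lambert series, in the spirit of \cite{AS}: here I would apply Chan's identity \cite{Chan} to sort each series by the exponent of $q$ modulo $k$, combine with the classical $k$-dissections of $1/\aqprod{q}{q}{\infty}$ and of the crank series $\aqprod{q}{q}{\infty}/\aqprod{qz,q/z}{q}{\infty}$ at $z=\zeta_k$ (for $k=3$ the latter is $\aqprod{q}{q}{\infty}^2/\aqprod{q^3}{q^3}{\infty}$; for $k=5,7$, Garvan's crank dissections), and extract the $q^{kn+r}$-component. The expectation is that after collecting terms this component cancels identically, which is exactly what each congruence asserts. The main obstacle is precisely this bookkeeping: the quadratic exponents $6n^2\pm n$, $3n^2\pm n$, and so on do not lie in a single residue class mod $k$, so several pieces of the Lambert-series dissection, of the $1/\aqprod{q}{q}{\infty}$ dissection, and of the crank dissection all feed the $q^{kn+r}$ term, and one must track signs and boundary terms (the $n=0$ term, and the junctions where the $n\bmod 3$ splitting of $\alpha^i$ meets the $n\bmod k$ grouping) carefully enough to see the cancellation. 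A secondary technical point is casting the $k=5,7$ Lambert series, after the partial-fraction step, into the precise shape to which Chan's identity applies.
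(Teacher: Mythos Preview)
Your proposal is correct and follows essentially the same strategy as the paper: reduce each congruence to the vanishing of the $q^{kn+r}$-term in $PP_i(\zeta_k,q)$, apply Bailey's Lemma to write $PP_i(z,q)$ as an explicit Lambert-type series minus the crank generating function, and dissect the former via Chan's identity and the latter via Garvan's known crank dissections. The paper's execution of your ``bookkeeping'' step is more structured than you anticipate---it packages the Lambert pieces into auxiliary functions $g(z,q)$ and $h(z,q)$ that turn out to be logarithmic derivatives of theta functions, converts these to products, and then verifies the resulting product identities by recognizing everything as generalized eta quotients on $\Gamma_1(N)$ and invoking the valence formula---but the overall architecture is the one you describe.
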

We use $PP_i(z,q)$ to prove these congruences as follows. We write
\begin{align*}
	PP_i(z,q) &= \sum_{n=1}^\infty\sum_{m=-\infty}^\infty M_i(m,n)z^mq^n
,
\end{align*}
and define for any positive integer $t$
\begin{align*}
	M_i(k,t,n) = \sum_{m\equiv k\pmod{t}} M_i(m,n)
.
\end{align*}
Thus for any $t$ we have
\begin{align*}
	pp_i(n) &= \sum_{m=-\infty}^\infty M_i(m,n) = \sum_{k=0}^{t-1} M_i(k,t,n)
\end{align*}
and for $\zeta$ a $t^{th}$ root of unity we have
\begin{align*}
	PP_i(\zeta,q) &= \sum_{n=1}^\infty q^n \sum_{k=0}^{t-1}\zeta^kM_i(k,t,n).
\end{align*}
If $\ell$ is prime and $\zeta_\ell$ is a primitive $\ell^{th}$ root of unity, then
the minimal polynomial for $\zeta_\ell$ is $\sum_{k=0}^{\ell-1}x^k$. If the 
coefficient of $q^N$ in $PP_i(\zeta_\ell,q)$ is zero we would then have
$M_i(1,\ell,N)=M_i(2,\ell,N)=\dots=M_i(\ell-1,\ell,N)$. That is, if
the coefficient of $q^N$ in $PP_i(\zeta_\ell,q)$ is zero then
$pp_i(N) = \ell\cdot M_i(1,\ell,N)$ and $pp_i(N)$ is clearly divisible by
$\ell$.

To prove the congruences of Theorem 1 we then need to prove that the coefficients
of $q^{3n}$, $q^{3n+1}$, $q^{5n+1}$, $q^{5n+4}$, $q^{7n+1}$, and $q^{5n+4}$
are zero in $PP_1(\zeta_3,q)$, $PP_2(\zeta_3,q)$, $PP_2(\zeta_5,q)$, 
$PP_3(\zeta_5,q)$, $PP_3(\zeta_7,q)$, and $PP_4(\zeta_5,q)$ respectively. That
these coefficients are zero is not immediately obvious. 

We are actually proving something stronger than just the 
congruences because we are saying how to split up the numbers $pp_i(n)$. While
we can immediately read off a combinatorial interpretation of each $M_i(m,n)$ in
terms of partition triples, this is not particularly satisfying. We conclude 
this paper by defining a crank on each type of partition pair that agrees with
$M_i(m,n)$. Thus each congruence has a combinatorial explanation in terms of
the partition pairs counted by $pp_i$.

We begin by applying
Bailey's Lemma to each $PP_i(z,q)$.
We note in general that if $\alpha$ and $\beta$ form a Bailey pair with 
$\alpha_0 = \beta_0 = 1$ then
\begin{align*}
	&\frac{\aqprod{q}{q}{\infty}}{\aqprod{z,z^{-1}}{q}{\infty}}
	\sum_{n=0}^\infty \aqprod{z,z^{-1}}{q}{n} q^n \beta_n
	-\frac{\aqprod{q}{q}{\infty}}{\aqprod{z,z^{-1}}{q}{\infty}}
	\\
	&=
	\frac{1}{(1-z)(1-z^{-1})\aqprod{q}{q}{\infty}}
	\Parans{1 + \sum_{n=1}^\infty \frac{(1-z)(1-z^{-1})q^n\alpha_n}
		{(1-zq^n)(1-z^{-1}q^n)}
	}
	-
	\frac{\aqprod{q}{q}{\infty}}{\aqprod{z,z^{-1}}{q}{\infty}}
.
\end{align*}
We note
\begin{align*}
	\frac{1}{(1-zq^{-3n-1})(1-z^{-1}q^{-3n-1})}
	&=
	\frac{q^{6n+2}}{(1-zq^{3n+1})(1-z^{-1}q^{3n+1})}
	,\\
	\frac{1}{(1-zq^{-3n})(1-z^{-1}q^{-3n})}
	&=
	\frac{q^{6n}}{(1-zq^{3n})(1-z^{-1}q^{3n})}
.
\end{align*}

Thus
\begin{align*}
	&\sum_{n=1}\frac{(1-z)(1-z^{-1})q^n\alpha_n^1}{(1-zq^n)(1-z^{-1}q^n)}
	\\
	&=
	\sum_{n=1}\frac{(1-z)(1-z^{-1})q^{6n^2+2n}(1+q^{2n})}
		{(1-zq^{3n})(1-z^{-1}q^{3n})}
	-\sum_{n=1}\frac{(1-z)(1-z^{-1})q^{6n^2-2n}}
		{(1-zq^{3n-1})(1-z^{-1}q^{3n-1})}
	-\sum_{n=0}\frac{(1-z)(1-z^{-1})q^{6n^2+8n+2}}
		{(1-zq^{3n+1})(1-z^{-1}q^{3n+1})}
	\\
	&=	
	\sum_{n\not=0}\frac{(1-z)(1-z^{-1})q^{6n^2+2n}}
		{(1-zq^{3n})(1-z^{-1}q^{3n})}
	-\sum_{n=-\infty}^\infty \frac{(1-z)(1-z^{-1})q^{6n^2+8n+2}}
		{(1-zq^{3n+1})(1-z^{-1}q^{3n+1})}
,
\end{align*}
so that
\begin{align*}
	PP_1(z,q)
	&=
	\frac{1}{\aqprod{q}{q}{\infty}}
	\Parans{
		\sum_{n=-\infty}^\infty\frac{q^{6n^2+2n}}
			{(1-zq^{3n})(1-z^{-1}q^{3n})}
		-\sum_{n=-\infty}^\infty \frac{q^{6n^2+8n+2}}
			{(1-zq^{3n+1})(1-z^{-1}q^{3n+1})}
	}
	-\frac{\aqprod{q}{q}{\infty}}{\aqprod{z,z^{-1}}{q}{\infty}}
.
\end{align*}
In the same fashion, we find that
\begin{align*}
	PP_2(z,q)
	&=
	\frac{1}{\aqprod{q}{q}{\infty}}
	\Parans{
		\sum_{n=-\infty}^\infty\frac{q^{6n^2+n}}
			{(1-zq^{3n})(1-z^{-1}q^{3n})}
		-\sum_{n=-\infty}^\infty \frac{q^{6n^2+5n+1}}
			{(1-zq^{3n+1})(1-z^{-1}q^{3n+1})}
	}
	-\frac{\aqprod{q}{q}{\infty}}{\aqprod{z,z^{-1}}{q}{\infty}}
	,\\
	PP_3(z,q)
	&=
	\frac{1}{\aqprod{q}{q}{\infty}}
	\Parans{
		\sum_{n=-\infty}^\infty\frac{q^{3n^2+2n}}
			{(1-zq^{3n})(1-z^{-1}q^{3n})}
		-\sum_{n=-\infty}^\infty \frac{q^{3n^2+4n+1}}
			{(1-zq^{3n+1})(1-z^{-1}q^{3n+1})}
	}
	-\frac{\aqprod{q}{q}{\infty}}{\aqprod{z,z^{-1}}{q}{\infty}}
	,\\
	PP_4(z,q)
	&=
	\frac{1}{\aqprod{q}{q}{\infty}}
	\Parans{
		\sum_{n=-\infty}^\infty\frac{q^{3n^2+n}}
			{(1-zq^{3n})(1-z^{-1}q^{3n})}
		-\sum_{n=-\infty}^\infty \frac{q^{3n^2+7n+2}}
			{(1-zq^{3n+1})(1-z^{-1}q^{3n+1})}
	}
	-\frac{\aqprod{q}{q}{\infty}}{\aqprod{z,z^{-1}}{q}{\infty}}
.
\end{align*}

We determine dissection formulas for the series without
$\frac{\aqprod{q}{q}{\infty}}{\aqprod{z,z^{-1}}{q}{\infty}}$
at roots of unity. Then we use known formulas for 
$\frac{\aqprod{q}{q}{\infty}}{\aqprod{zq,z^{-1}q}{q}{\infty}}$ at roots of unity
to give formulas for the $PP_i(z,q)$. Although it would be possible to use
the methods of this paper to determine all of the dissections of 
$PP_1(z,q)$, $PP_2(z,q)$, $PP_3(z,q)$, and $PP_4(z,q)$ at
$z=\zeta_3$, $\zeta_5$, and $\zeta_7$, we only do so for the cases
that lead to a congruence.

We define
\begin{align*}
	S(z,w,q) &= \sum_{n=-\infty}^\infty \frac{q^{2n(n+1)}w^n}{1-zq^n}
	,
	&S^*(w,q) = \sum_{n\not=0} \frac{q^{2n(n+1)}w^n}{1-q^n}
.
\end{align*}
Although they do not appear in the statement of our theorems, we will
also need the following series,
\begin{align*}
	T(z,w,q) &= \sum_{n=-\infty}^\infty \frac{q^{n(n+1)}w^n}{1-zq^n}
	, 
	&T^*(w,q) &= \sum_{n\not=0} \frac{q^{n(n+1)}w^n}{1-q^n}
.
\end{align*} 
For integers $a$, $b$, $c$ we use the abuse of notation
$\SSeries{a}{b}{c} = \SSeries{q^a}{q^b}{q^c}$, 
$S^*(b,c) = S^*(q^b,q^c)$,
$T(a,b,c) = T(q^a,q^b,q^c)$, and $T^*(b,c) = T^*(q^b,q^c)$.

\begin{theorem}\label{TheoremPP13}
\begin{align*}
	&\frac{1}{\aqprod{q}{q}{\infty}}
	\Parans{
		\sum_{n=-\infty}^\infty\frac{q^{6n^2+2n}}
			{(1-\zeta_3q^{3n})(1-\zeta_3^{-1}q^{3n})}
		-\sum_{n=-\infty}^\infty \frac{q^{6n^2+8n+2}}
			{(1-\zeta_3q^{3n+1})(1-\zeta_3^{-1}q^{3n+1})}
	}
	\\
	&=
	A_{130}(q^3) + qA_{131}(q^3) + q^2A_{132}(q^3)
,
\end{align*}
where 
\begin{align*}
	A_{130}(q) &=
		\frac{1}{3}\frac{\aqprod{q^{9}}{q^{9}}{\infty}^2\jacprod{q^{4}}{q^{9}}^2}
			{\aqprod{q}{q}{\infty}}
		+
		\frac{2}{3}q\frac{\aqprod{q^{9}}{q^{9}}{\infty}^2\jacprod{q,q^2}{q^{9}}}
			{\aqprod{q}{q}{\infty}}
	,\\
	A_{131}(q)
	&=
		\frac{q}{\aqprod{q^{9}}{q^{9}}{\infty}\jacprod{q^{4}}{q^{9}}}
		\Parans{S(-3, -28, 9) - q^{28}S(11, 28, 9) }	
		+\frac{1}{3}\frac{\aqprod{q^{9}}{q^{9}}{\infty}^2\jacprod{q^2,q^{4}}{q^{9}}}
			{\aqprod{q}{q}{\infty}}
		\\&\quad
		+\frac{1}{3} q\frac{\aqprod{q^{9}}{q^{9}}{\infty}^2\jacprod{q}{q^{9}}^2}
			{\aqprod{q}{q}{\infty}}
	,\\
	A_{132}(q)
	&=
		-\frac{q^2}{\aqprod{q^{9}}{q^{9}}{\infty}\jacprod{q}{q^{9}}}
		\Parans{S(3, -2, 9) - q^{2}S(4, 2, 9) }
		+
		\frac{1}{3}\frac{\aqprod{q^{9}}{q^{9}}{\infty}^2\jacprod{q,q^4}{q^{9}}}
			{\aqprod{q}{q}{\infty}}
		\\&\quad
		-\frac{2}{3}\frac{\aqprod{q^{9}}{q^{9}}{\infty}^2\jacprod{q^2}{q^{9}}^2}
			{\aqprod{q}{q}{\infty}}
.
\end{align*}
\end{theorem}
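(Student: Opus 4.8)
The plan is to specialize $z=\zeta_3$, clear $\zeta_3$ from the denominators of the two bilateral sums, split the summation index into residue classes modulo $3$ so that the pieces become copies of $S(z,w,q)$ and $S^{*}(w,q)$ with arguments powers of $q^{27}$, and then divide by $\aqprod{q}{q}{\infty}$ and reassemble the three arithmetic progressions modulo $3$.

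First I would use $\zeta_3+\zeta_3^{-1}=-1$, so that $(1-\zeta_3q^{j})(1-\zeta_3^{-1}q^{j})=1+q^{j}+q^{2j}$, which for $j\not=0$ equals $\frac{1-q^{3j}}{1-q^{j}}$ and at $j=0$ equals $3$. Thus, setting aside the $n=0$ term of the first sum (it contributes $\tfrac13$),
$$\frac{1}{(1-\zeta_3q^{j})(1-\zeta_3^{-1}q^{j})}=\frac{1-q^{j}}{1-q^{3j}},$$
applied with $j=3n$ and $j=3n+1$. This rewrites the expression in parentheses as a Laurent-polynomial-in-$q$ combination of generalized Lambert series $\sum_{n}\frac{q^{6n^{2}+cn+d}}{1-q^{9n+e}}$, with one term deleted where $e\equiv 0$. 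Writing $n=3m+r$ with $r\in\{-1,0,1\}$ and pulling out the ensuing power of $q$, the inner sum over $m$ is a copy of $S(q^{a},q^{b},q^{27})$ when $r=\pm1$ and of $S^{*}(q^{b},q^{27})$ when $r=0$; these are exactly the $S$-series of the statement once $q$ is replaced by $q^{3}$. For instance the $r=-1$ piece of $\sum_{n}\frac{q^{6n^{2}+2n}}{1-q^{9n}}$ is $q^{4}S(q^{-9},q^{-84},q^{27})$, i.e.\ $q^{4}$ times $S(-3,-28,9)$ with $q\mapsto q^{3}$, matching the $S(-3,-28,9)$ occurring in $A_{131}$.

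The crucial step is then the identity of Chan~\cite{Chan} on generalized Lambert series. It is what lets the $S^{*}$-pieces (the $r=0$ contributions, which are absent from the final answer) be written as explicit theta quotients, and, combined with the elementary functional equations for $S(z,w,q)$ obtained by shifting the summation index, what collapses the remaining collection of $S$-series down to the two appearing in $A_{131}$ and the two in $A_{132}$; the auxiliary $T$ and $T^{*}$ may also enter these intermediate reductions. The theta quotients so produced are then put into the stated eta-quotient form by the Jacobi triple product and routine rearrangement. Finally I would divide by $\aqprod{q}{q}{\infty}$ and separate the progressions modulo $3$. The $S$-series contributions each lie in a single residue class (readable off the exponents above), and the prefactors $\frac{1}{\aqprod{q^{9}}{q^{9}}{\infty}\jacprod{q^{4}}{q^{9}}}$ and $\frac{1}{\aqprod{q^{9}}{q^{9}}{\infty}\jacprod{q}{q^{9}}}$ in $A_{131},A_{132}$ arise on combining $\frac{1}{\aqprod{q}{q}{\infty}}$ with those theta denominators, using $\aqprod{q}{q}{\infty}\aqprod{\zeta_3q}{\zeta_3q}{\infty}\aqprod{\zeta_3^{-1}q}{\zeta_3^{-1}q}{\infty}=\aqprod{q^{3}}{q^{3}}{\infty}^{4}\big/\aqprod{q^{9}}{q^{9}}{\infty}$ (equivalently, the $3$-dissection of $1/\aqprod{q}{q}{\infty}$) together with standard eta-quotient identities; the pure-theta contributions, collected by residue, give the remaining terms of $A_{130},A_{131},A_{132}$, with the rational coefficients $\tfrac13,\tfrac23$ traceable to the $\tfrac13$ of the $n=0$ term spread out by these identities. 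Comparing coefficients of $q^{0},q^{1},q^{2}\pmod 3$ yields the asserted dissection.

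I expect the main obstacle to be twofold: obtaining Chan's identity in exactly the specialization needed to evaluate the $S^{*}$-pieces and to reduce the many $S$-series to the two stated in each class; and the bookkeeping — tracking which of the many Lambert and theta pieces falls into which residue class modulo $3$, and verifying the non-obvious eta-quotient simplifications, in particular that the coefficients really collapse to $\tfrac13$ and $\tfrac23$. The manipulations of the bilateral sums themselves (termwise partial fractions, the splitting modulo $3$, and reindexing) are valid for $|q|<1$ by absolute convergence and are routine.
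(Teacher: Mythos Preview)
Your outline tracks the paper's approach in broad strokes---reduce to $V_\ell^6$ and $U_\ell^6$ (your partial-fractions step is precisely (2.1)), split modulo $\ell=3$ to obtain $S$ and $S^*$ series, and invoke Chan's identity---but there are genuine gaps at the ``crucial step.''

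First, Chan's identity (Lemma~\ref{ChanLemma1}) does \emph{not} directly express the $S^*$-pieces as theta quotients: it has a pole as one $b_i\to 1$, which is exactly the limit producing $S^*$. The paper handles this by pairing the $S^*$ term with the $k=0$ copy of $S$ and introducing the regularized function $g(z,q)$ (see the limit definition preceding Lemma~\ref{ChanLemma2} and the proofs of Lemmas~\ref{LemmaFor6b1} and~\ref{LemmaFor6b4}). Turning $g(q^{21},q^{27})+g(q^{24},q^{27})$ into products then requires a separate identity, Lemma~\ref{CorollaryGProducts}, which comes from Atkin--Swinnerton-Dyer rather than from Chan. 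You do not mention any analogue of this step, and without it the $r=0$ contributions cannot be closed up. (Incidentally, $T,T^*$ do not enter here; they are for $PP_3,PP_4$.)

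Second, the prefactors $\frac{1}{\aqprod{q^{9}}{q^{9}}{\infty}\jacprod{q^{a}}{q^{9}}}$ in front of the surviving $S$-differences do not arise from a $3$-dissection of $1/\aqprod{q}{q}{\infty}$ as you suggest. In the paper they come from summing the coefficients of the $k=1$ leftover and the Chan-correction terms (the $X$, $Y$ of Lemmas~\ref{LemmaFor6b1}, \ref{LemmaFor6b4}) and recognizing the resulting finite theta sum as $\aqprod{q}{q}{\infty}$ via Euler's pentagonal number theorem; this $\aqprod{q}{q}{\infty}$ then cancels the global $1/\aqprod{q}{q}{\infty}$. Likewise the rational coefficients $\tfrac13,\tfrac23$ are not the $n=0$ term ``spread out'': they emerge from the $g$-to-products identity combined with a final nontrivial equality of eta-quotients (equation~(\ref{EqPP1Mod3Eq1})), which the paper verifies by recognizing both sides as modular functions on $\Gamma_1(27)$ and checking the $q$-expansion past $q^{25}$. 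Your proposal omits any such terminal verification, and the identity does not fall out by routine rearrangement.
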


\begin{theorem}\label{TheoremPP23}
\begin{align*}
	&
	\frac{1}{\aqprod{q}{q}{\infty}}
	\Parans{
		\sum_{n=-\infty}^\infty\frac{q^{6n^2+n}}
			{(1-\zeta_3q^{3n})(1-\zeta_3^{-1}q^{3n})}
		-\sum_{n=-\infty}^\infty \frac{q^{6n^2+5n+1}}
			{(1-\zeta_3q^{3n+1})(1-\zeta_3^{-1}q^{3n+1})}
	}
	\\
	&=
	A_{230}(q^3) + qA_{231}(q^3) + q^2A_{232}(q^3)
,
\end{align*}
where 
\begin{align*}
	A_{230}(q) 
	&= 
		\frac{q^{-1}}{\aqprod{q^{9}}{q^{9}}{\infty}\jacprod{q^{2}}{q^{9}}}
			\left(\SSeries{-3}{-22}{9}-q^{22}\SSeries{8}{22}{9}\right)
		-\frac{2}{3}\frac{\aqprod{q^{9}}{q^{9}}{\infty}^2\jacprod{q^{4}}{q^{9}}^2}
			{\aqprod{q}{q}{\infty}} 
		\\&\quad
		-\frac{1}{3}q\frac{\aqprod{q^{9}}{q^{9}}{\infty}^2\jacprod{q,q^2}{q^{9}}}
			{\aqprod{q}{q}{\infty}} 	
	,\\
	A_{231}(q) 
	&= 
		-\frac{2}{3}\frac{\aqprod{q^{9}}{q^{9}}{\infty}^2\jacprod{q^2,q^{4}}{q^{9}}}
			{\aqprod{q}{q}{\infty}} 
		+\frac{1}{3}q\frac{\aqprod{q^{9}}{q^{9}}{\infty}^2\jacprod{q}{q^{9}}^2}
			{\aqprod{q}{q}{\infty}}
	,\\
	A_{232}(q)
	&=
		\frac{q^{2}}{\aqprod{q^{9}}{q^{9}}{\infty}\jacprod{q}{q^{9}}}
			\left(\SSeries{3}{-2}{9}-q^{2}\SSeries{4}{2}{9}\right)
		+\frac{1}{3}\frac{\aqprod{q^{9}}{q^{9}}{\infty}^2\jacprod{q,q^{4}}{q^{9}}}
			{\aqprod{q}{q}{\infty}} 
		\\&\quad
		+\frac{1}{3}\frac{\aqprod{q^{9}}{q^{9}}{\infty}^2\jacprod{q^2}{q^{9}}^2}
			{\aqprod{q}{q}{\infty}}
.
\end{align*}
\end{theorem}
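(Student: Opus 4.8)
\emph{Proof proposal.} The plan is to prove Theorem~\ref{TheoremPP23} by the same route that handles Theorem~\ref{TheoremPP13}, namely the root-of-unity dissection method of Atkin and Swinnerton-Dyer \cite{AS}. The left side is $\aqprod{q}{q}{\infty}^{-1}$ times a difference of two bilateral series, each carrying a double pole from the factor $(1-\zeta_3 q^{m})(1-\zeta_3^{-1}q^{m})$. The first step is to clear this double pole. Since $\zeta_3+\zeta_3^{-1}=-1$, for $m\neq 0$ one has $(1-\zeta_3 q^{m})(1-\zeta_3^{-1}q^{m})=1+q^{m}+q^{2m}=\frac{1-q^{3m}}{1-q^{m}}$, so the first series becomes $\tfrac13+\sum_{n\neq 0}q^{6n^{2}+n}\bigl(\frac{1}{1-q^{9n}}-\frac{q^{3n}}{1-q^{9n}}\bigr)$, the isolated term $n=0$ contributing the rational constant $\tfrac13$, while the second series, whose denominators never vanish, becomes $\sum_{n}q^{6n^{2}+5n+1}\bigl(\frac{1}{1-q^{9n+3}}-\frac{q^{3n+1}}{1-q^{9n+3}}\bigr)$. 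This puts the whole bracket in the shape of a rational constant plus a finite $\mathbb{Z}$-combination of $\zeta_3$-free Lambert-type series $\sum_{n}\frac{q^{An^{2}+Bn+C}}{1-q^{Dn+E}}$. (As a sanity check, the original bracket is invariant under $\zeta_3\mapsto\zeta_3^{-1}$, hence real, which is consistent with the manifestly rational right side.)

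The second step is the $3$-dissection. For each Lambert series write $n=3k+r$ with $r\in\{-1,0,1\}$ and sort powers of $q$ by residue mod $3$; since $An^{2}+Bn+C$ is constant mod $3$ on each residue class, every class becomes a single bilateral Lambert series in the base $q^{27}$, matching the base-$q^{27}$ series occurring in the $A_{23j}(q^{3})$ on the right. On the part of the range where the lower index $Dn+E$ is negative, use $\frac{1}{1-q^{-u}}=-\frac{q^{u}}{1-q^{u}}$ together with $n\mapsto -n$ to normalize. The classes that run over all of $\mathbb{Z}$ become complete bilateral series of the paper's $S$-type; the only class forced to omit $k=0$, namely $r=0$ in the first series, becomes a series of the starred type $S^{*}$ (the $T$ and $T^{*}$ series are on hand should a parity of exponents call for them).

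The key analytic input is Chan's identity on generalized Lambert series \cite{Chan}, applied to the starred pieces: it rewrites each $S^{*}$ as an explicit eta-quotient plus a combination of complete $S$-type series. Feeding this back into the dissection, the eta-quotient contributions are exactly the modular factors $\frac{\aqprod{q^{9}}{q^{9}}{\infty}^{2}\jacprod{q^{4}}{q^{9}}^{2}}{\aqprod{q}{q}{\infty}}$, $\frac{\aqprod{q^{9}}{q^{9}}{\infty}^{2}\jacprod{q,q^{2}}{q^{9}}}{\aqprod{q}{q}{\infty}}$, $\frac{\aqprod{q^{9}}{q^{9}}{\infty}^{2}\jacprod{q^{2},q^{4}}{q^{9}}}{\aqprod{q}{q}{\infty}}$, and so on, carrying the rational coefficients $\tfrac13$, $\tfrac23$, while the remaining complete $S$-type series collect into the explicit terms $\SSeries{-3}{-22}{9}-q^{22}\SSeries{8}{22}{9}$ in $A_{230}$ and $\SSeries{3}{-2}{9}-q^{2}\SSeries{4}{2}{9}$ in $A_{232}$ and cancel in $A_{231}$. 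Any pole-free theta pieces are identified by the Jacobi triple product (and, if a quadratic-residue pattern demands it, the quintuple product) identity as the $\jacprod{\,\cdot\,}{q^{9}}$ factors, and the overall $\aqprod{q}{q}{\infty}^{-1}$ is absorbed via $\aqprod{q}{q}{\infty}=\aqprod{q^{3}}{q^{3}}{\infty}\aqprod{q,q^{2}}{q^{3}}{\infty}$ together with the triple-product form of $\aqprod{q,q^{2}}{q^{3}}{\infty}$. Collecting by residue of the exponent mod $3$ then reads off $A_{230}$, $A_{231}$, $A_{232}$.

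I expect essentially all of the difficulty to be bookkeeping rather than any single conceptual step. The structural moves — clearing the double pole, the $3$-dissection, the reflection $n\mapsto -n$, Chan's identity, and the product identities — are routine in this circle of ideas. What must be verified with care is that the many Lambert and theta constituents produced by the dissection recombine \emph{exactly} into the stated closed forms: that the non-eta-quotient remainder is precisely the displayed combination of complete $S$-series, with the right signs (for instance the $-q^{22}\SSeries{8}{22}{9}$) and the right prefactors $\aqprod{q^{9}}{q^{9}}{\infty}^{-1}\jacprod{q^{2}}{q^{9}}^{-1}$, that no spurious terms survive, and above all that the rational constants collapse to $\tfrac13$ and $\tfrac23$ and not to something else. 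A practical safeguard, once the symbolic work is done, is to confirm the resulting identity to high order in $q$ by machine.
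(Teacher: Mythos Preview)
Your overall strategy---clear the double pole to land in the $U^6_3,V^6_3$ framework of (\ref{PP2Zeta3}), dissect each Lambert series into base-$q^{27}$ pieces, apply Chan's identity, and reduce to a product identity---is exactly the paper's route. Two points need sharpening, though.

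First, your description of how the starred series are handled is not how it works. Chan's identity (Lemma~\ref{ChanLemma1}) is a four-term relation among \emph{complete} $S$-series and a product; it does not ``rewrite each $S^{*}$ as an explicit eta-quotient plus complete $S$-series.'' In the paper the $S^{*}$ piece, together with certain neighbouring $S$-terms that arise from the $k=0,1$ summands of the dissection, is packaged into the auxiliary function $g(z,q)$ (see the proofs of Lemmas~\ref{LemmaFor6b1} and~\ref{LemmaFor6b4}). It is $g$ that is then converted to products, not via Chan but via Lemma~\ref{CorollaryGProducts}, which rests on the limit computation of Lemma~\ref{LemmaLogThetaForGAndG*} identifying $g$ with a logarithmic derivative of theta functions and on the Atkin--Swinnerton-Dyer identity (5.6) of \cite{AS}. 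Without this step you will not get the rational coefficients $\tfrac13,\tfrac23$ to fall out; they come precisely from the combination $1-3g(q^{24},q^{27})-3g(q^{21},q^{27})$ evaluated by Lemma~\ref{CorollaryGProducts}.

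Second, the final product identity is not merely checked ``as a safeguard.'' After the $g$-terms and the two surviving $S$-combinations are accounted for, what remains is an identity between generalized eta-quotients on $\Gamma_1(27)$. The paper proves it rigorously: each term is shown to be a modular function on $\Gamma_1(27)$, and the valence formula reduces the claim to verifying the $q$-expansion through $q^{25}$. You should state this as the actual argument, not as a numerical confirmation after the fact.
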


\begin{theorem}\label{TheoremPP25}
\begin{align*}
	&
	\frac{1}{\aqprod{q}{q}{\infty}}
	\Parans{
		\sum_{n=-\infty}^\infty\frac{q^{6n^2+n}}
			{(1-\zeta_5q^{3n})(1-\zeta_5^{-1}q^{3n})}
		-\sum_{n=-\infty}^\infty \frac{q^{6n^2+5n+1}}
			{(1-\zeta_5zq^{3n+1})(1-\zeta_5^{-1}q^{3n+1})}
	}
	\\	
	&=
	A_{250}(q^5) + qA_{251}(q^5) + q^2A_{252}(q^5) + q^3A_{253}(q^5)+ q^4A_{254}(q^5)
,
\end{align*}
where 
\begin{align*}
	A_{250}(q) 
	&= 
		-\frac{q^{-3}}{\aqprod{q^{15}}{q^{15}}{\infty}\jacprod{q}{q^{15}}}
		\Parans{S(-3,-28,15) - q^{28}S(11,28,15)}
		\\&\quad
		+(\zeta_5+\zeta_5^4)\frac{q^{2}}{\aqprod{q^{15}}{q^{15}}{\infty}\jacprod{q^7}{q^{15}}}
		\Parans{S(3,-14,15) - q^{14}S(10,14,15)}
		\\&\quad
		+
		\frac{3+\zeta_5+\zeta_5^4}{5}\frac{\aqprod{q^{5}}{q^{5}}{\infty}\jacprod{q^2}{q^5}}
			{\jacprod{q}{q^5}^2}
		-
		\frac{\aqprod{q^{15}}{q^{15}}{\infty}\jacprod{q^2}{q^5}}
			{\jacprod{q^2,q^3}{q^{15}}\jacprod{q}{q^5}}
		-
		(\zeta_5+\zeta_5^4)q^2\frac{\aqprod{q^{15}}{q^{15}}{\infty}\jacprod{q}{q^{15}}}
			{\jacprod{q^3,q^4,q^5}{q^{15}}}
	,\\
	A_{251}(q) 
	&=
		\frac{-2+\zeta_5+\zeta_5^4}{5}\frac{\aqprod{q^5}{q^5}{\infty}}{\jacprod{q}{q^{5}}}
	,\\
	A_{252}(q)
	&=
		\frac{1-3(\zeta_5+\zeta_5^4)}{5}\frac{\aqprod{q^5}{q^5}{\infty}}{\jacprod{q^2}{q^{5}}}
	,\\
	A_{253}(q)
	&=
		(\zeta_5+\zeta_5^4)\frac{q^{-2}}{\aqprod{q^{15}}{q^{15}}{\infty}\jacprod{q^2}{q^{15}}}
		\Parans{\SSeries{-3}{-34}{15} - q^{34}\SSeries{14}{34}{15}}
		\\&\quad
		+
		\frac{-1+3(\zeta_5+\zeta_5^4)}{5}\frac{\aqprod{q^5}{q^5}{\infty}\jacprod{q}{q^5}}
			{\jacprod{q^2}{q^{5}}^2}
		-(\zeta_5+\zeta_5^4)q^{-1}\frac{\aqprod{q^{15}}{q^{15}}{\infty}\jacprod{q^4,q^6}{q^{15}}}
			{\jacprod{q,q^3,q^3,q^7}{q^{15}}}
	,\\
	A_{254}(q)
	&=
		\frac{q^{2}}{\aqprod{q^{15}}{q^{15}}{\infty}\jacprod{q^4}{q^{15}}}
		\Parans{\SSeries{3}{-8}{15} - q^{8}\SSeries{7}{8}{15}}
		+
		\frac{\aqprod{q^{15}}{q^{15}}{\infty}}{\jacprod{q^3,q^4}{q^{15}}}
.
\end{align*}
\end{theorem}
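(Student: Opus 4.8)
The plan is to prove this by the Atkin--Swinnerton-Dyer style $5$-dissection used for Theorems~\ref{TheoremPP13} and~\ref{TheoremPP23}; the only genuinely new feature is the heavier bookkeeping caused by $\mathbb{Q}(\zeta_5)$ having degree $4$ over $\mathbb{Q}$ and by there being five residue classes to sort by instead of three. First I would clear each of the two bilateral sums on the left with the partial fraction decomposition
\begin{align*}
	\frac{1}{\Parans{1-\zeta_5 x}\Parans{1-\zeta_5^{-1}x}}
	&=
	\frac{1}{\zeta_5-\zeta_5^{-1}}
	\Parans{
		\frac{\zeta_5}{1-\zeta_5 x}-\frac{\zeta_5^{-1}}{1-\zeta_5^{-1}x}
	}
,
\end{align*}
applied with $x=q^{3n}$ in the first sum and $x=q^{3n+1}$ in the second. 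This rewrites the left-hand side as a $\mathbb{Q}(\zeta_5)$-linear combination of generalized Lambert series of the shape $\frac{1}{\aqprod{q}{q}{\infty}}\sum_{n}q^{An^2+Bn}/\Parans{1-\zeta_5^{\pm1}q^{3n+D}}$.

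Next I would run the dissection machinery from the $\zeta_3$ cases on each of these pieces. Each such series satisfies a functional equation of Atkin--Swinnerton-Dyer type, relating it modulo an explicit theta function to a companion series, and iterating these relations together with the known zeros of the surrounding theta quotients determines each series as a theta quotient plus a remainder of the form $\SSeries{a}{b}{c}$ (or a pole-subtracted companion $S^*$, $T$, or $T^*$); this is why only $S$-functions of level $q^{15}$ appear, and only inside $A_{250}$, $A_{253}$, and $A_{254}$. For the pieces whose remainder is ``purely modular'' one invokes the generalized Lambert series identity of Chan~\cite{Chan} in the appropriate specialization to collapse it to a ratio of $\aqprod{\cdot}{\cdot}{\infty}$ and $\jacprod{\cdot}{\cdot}$. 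One then multiplies by, and simplifies against, the classical $5$-dissection of $\frac{1}{\aqprod{q}{q}{\infty}}$ (the one behind Ramanujan's $p(5n+4)\equiv0\pmod{5}$) along with standard identities for the relevant level-$5$ and level-$15$ Jacobi products; the constants $\tfrac{3+\zeta_5+\zeta_5^4}{5}$, $\tfrac{-2+\zeta_5+\zeta_5^4}{5}$, $\tfrac{1-3(\zeta_5+\zeta_5^4)}{5}$, and the like are exactly the coefficients that arise when the partial fractions are resolved over $\mathbb{Q}(\zeta_5)$ and then symmetrized under $\zeta_5\leftrightarrow\zeta_5^{-1}$, so that only $\zeta_5+\zeta_5^{-1}=\zeta_5+\zeta_5^{4}$ survives. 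Collecting by the residue of the power of $q$ modulo $5$ then yields the claimed $A_{250},\dots,A_{254}$.

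The step I expect to be the main obstacle is this last identification. After the dissection several pieces are still bilateral generalized Lambert series, or products of theta quotients at different levels, and matching them to the stated $A_{25i}$ requires invoking Chan's identity in exactly the right form and then proving a short list of classical theta-product identities (Jacobi triple product rearrangements, the quintuple product identity, and addition and product formulas for $\jacprod{\cdot}{q^{5}}$ and $\jacprod{\cdot}{q^{15}}$). A secondary subtlety, exactly as in the $\zeta_3$ cases, is the treatment of the $n=0$ terms: the partial-fraction pieces introduce poles that must cancel in conjugate pairs, which is precisely why the pole-subtracted series $S^*$ and $T^*$ were introduced, and these boundary contributions must be tracked carefully so that the stated $A_{25i}$ come out as honest power series in $q$. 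Once the requisite theta identities are in hand, no undetermined constants remain (every coefficient has been matched through the dissection), so equality of the two sides follows; in practice one would also check a few low-order coefficients numerically as a safeguard.
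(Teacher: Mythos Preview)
Your proposal has a genuine structural gap at the very first step, and the remainder is too schematic to count as a proof.

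The partial-fraction decomposition you write down leaves $\zeta_5$ in the denominators, producing Appell--Lerch-type sums $\sum_n q^{6n^2+n}/(1-\zeta_5^{\pm1}q^{3n})$. The paper does \emph{not} do this. Instead it uses $\prod_{j=0}^{4}(1-\zeta_5^{j}x)=1-x^5$ to clear the roots of unity completely, arriving at the decomposition~(\ref{PP2Zeta5}) in terms of the functions $V_5^6(b)$ and $U_5^6(b)$, whose denominators are $1-q^{15n}$ and $1-q^{15n+5}$. The left-hand side is then $\frac{A+(\zeta_5+\zeta_5^4)B}{\aqprod{q}{q}{\infty}}$ with $A,B$ built from these $V$'s and $U$'s, and the proof treats $A$ and $B$ separately.

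From there the paper does not iterate functional equations in the Atkin--Swinnerton-Dyer style you describe. It applies the two structural Lemmas~\ref{LemmaFor6b1} and~\ref{LemmaFor6b4}, which express each combination $V_5^6(6b+j)-q^{\cdots}U_5^6(\cdots)$ as the auxiliary function $g(z,q)$ plus a single $S$-difference (already carrying a factor of $\aqprod{q}{q}{\infty}$) plus a finite sum of theta quotients; Chan's identity enters \emph{inside} the proofs of those lemmas, not as a final simplification. Your outline never mentions $g$, which is the heart of the argument.

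The function $g$ is then converted to products via~(\ref{EqGToProducts}), or, when $z^3$ is a power of $q$ (here $g(q^{50},q^{75})$), replaced by the weight-$1$ Eisenstein series $V_{\chi,1}(25\tau)$. What remains is an identity among generalized eta quotients on $\Gamma_1(75)$, and the paper proves it by the valence formula: compute lower bounds for the orders at all cusps and then verify the $q$-expansion through $q^{199}$ (for $A$) and $q^{201}$ (for $B$). There is no appeal to the quintuple product identity or to the $5$-dissection of $1/\aqprod{q}{q}{\infty}$; since the $S$-terms already carry $\aqprod{q}{q}{\infty}$, dividing by it simply cancels, and the residual product identity is handled modularly, not by classical manipulations.

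In short, your first move sends the computation down a different and harder road, and the subsequent steps you list (functional-equation iteration, multiplying by the dissection of $1/\aqprod{q}{q}{\infty}$, quintuple product) do not match what the paper actually does; nor do you supply enough detail to see that your alternative route would close.
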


\begin{theorem}\label{TheoremPP35}
\begin{align*}
	&\frac{1}{\aqprod{q}{q}{\infty}}
	\Parans{
		\sum_{n=-\infty}^\infty\frac{q^{3n^2+2n}}
			{(1-\zeta_5q^{3n})(1-\zeta_5^{-1}q^{3n})}
		-\sum_{n=-\infty}^\infty \frac{q^{3n^2+4n+1}}
			{(1-\zeta_5q^{3n+1})(1-\zeta_5^{-1}q^{3n+1})}
	}
	\\
	&= \aqprod{q^{25}}{q^{25}}{\infty}
	\left(
		A_{350}(q^5)+qA_{351}(q^5)+q^2A_{352}(q^5)+q^3A_{353}(q^5)
	\right)
,
\end{align*}
where
\begin{align*}
	A_{350}(q)
	&=
		\frac{3+\zeta_5+\zeta_5^4}{5}\frac{\jacprod{q^2}{q^5}}{\jacprod{q}{q^5}^2} 
    	-q\frac{\jacprod{q^2}{q^{15}}}{\jacprod{q,q^4,q^5,q^6}{q^{15}}}
	,\\
	A_{351}(q) 
	&= 
		\frac{-2+\zeta_5+\zeta_5^4}{5}\frac{1}{\jacprod{q}{q^5}} 
    	-(\zeta_5+\zeta_5^4)q\frac{1}{\jacprod{q^3,q^5,q^7}{q^{15}}}
	,\\
	A_{352}(q) 
	&= 
		\frac{1+2(\zeta_5+\zeta_5^4)}{5}\frac{1}{\jacprod{q^2}{q^5}} 
    	- (\zeta_5+\zeta_5^4)\frac{1}{\jacprod{q,q^5,q^6}{q^{15}}}
	,\\
	A_{353}(q) 
	&=
		\frac{-1+3(\zeta_5+\zeta_5^4)}{5}\frac{\jacprod{q}{q^5}}{\jacprod{q^2}{q^5}^2} 
    	+(-1+\zeta_5+\zeta_5^4)q\frac{\jacprod{q}{q^{15}}}{\jacprod{q^2,q^3,q^5,q^7}{q^{15}}}
.
\end{align*}
\end{theorem}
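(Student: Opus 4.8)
The plan is to adapt the dissection technique of Atkin and Swinnerton-Dyer \cite{AS} to the two generalized Lambert series on the left, and then to invoke Chan's identity \cite{Chan} to collapse the result to a pure theta quotient. The first move is to remove the quadratic denominators by partial fractions: since $\zeta_5\neq\zeta_5^{-1}$,
\begin{equation*}
	\frac{1}{(1-\zeta_5 x)(1-\zeta_5^{-1} x)}
	= \frac{1}{\zeta_5 - \zeta_5^{-1}}\left(\frac{\zeta_5}{1-\zeta_5 x} - \frac{\zeta_5^{-1}}{1-\zeta_5^{-1} x}\right),
\end{equation*}
applied with $x=q^{3n}$ and with $x=q^{3n+1}$. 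This rewrites the bracketed expression (before dividing by $\aqprod{q}{q}{\infty}$) as a $\mathbb{Q}(\zeta_5)$-linear combination of series of the shape $\sum_n q^{Q(n)}w^n/(1-\zeta_5^{\pm 1}q^{n})$ with $Q$ a quadratic of leading coefficient tied to $3$.

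Next I would complete squares and split the summation index $n$ into residue classes modulo $5$ (equivalently, track $3n\bmod 5$); this recasts each such series, with base $q$ replaced by $q^{15}$, in terms of the series $S(z,w,q)$ and $T(z,w,q)$ evaluated at fifth roots of unity, the residue class $n\equiv 0$ — where a denominator $1-\zeta_5^{\pm1}q^{15m}$ survives — producing the $S^{*}$- and $T^{*}$-type contributions. This is the analogue of the computation that will leave residual $S$-series in Theorems \ref{TheoremPP13}, \ref{TheoremPP23}, and \ref{TheoremPP25}. The step special to the present case is that the particular combination of generalized Lambert series so obtained is literally a quotient of Jacobi theta functions, which is exactly the content of Chan's identity \cite{Chan}; after applying it, no $S$ or $T$ series remains.

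What is left is theta-function bookkeeping: repeated use of the Jacobi triple product, the quintuple product identity, and three-term addition relations among theta functions of moduli $5$, $15$, $25$, and $75$, to bring the expression into the stated form. Throughout, the coefficients must be reduced in $\mathbb{Z}[\zeta_5]$ using $1+\zeta_5+\zeta_5^2+\zeta_5^3+\zeta_5^4=0$ and the fact that $\zeta_5+\zeta_5^{4}$ is a root of $x^2+x-1$; this is what converts the raw partial-fraction constants into expressions such as $\tfrac{3+\zeta_5+\zeta_5^4}{5}$. Finally, multiplying by $\frac{1}{\aqprod{q}{q}{\infty}}$, absorbing it into the theta quotient, and sorting the result by the residue of the exponent modulo $5$ yields the components $A_{350},A_{351},A_{352},A_{353}$ with the common factor $\aqprod{q^{25}}{q^{25}}{\infty}$ extracted; the vanishing of the $q^{4}$-component drops out of this sorting.

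I expect the main obstacle to be the theta-function matching after Chan's identity: checking that the linear combination of Jacobi products delivered by the dissection equals the compact quotients $A_{35j}$ requires nonobvious three-term theta relations rather than mechanical rewriting, and the $\zeta_5$-dependent coefficients must be propagated exactly through every one of those relations. A secondary nuisance is handling the residue class $n\equiv 0$ (the source of the $S^{*}/T^{*}$ terms) together with the interaction of the $\frac{1}{\aqprod{q}{q}{\infty}}$ factor with the modulus-$5$ dissection, since $\aqprod{q}{q}{\infty}$ is not itself a series in $q^{5}$.
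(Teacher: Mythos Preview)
Your opening move---partial fractions in $\zeta_5$---is not what the paper does, and the mismatch propagates. The paper instead multiplies numerator and denominator by $(1-q^{3n})(1-\zeta_5^2 q^{3n})(1-\zeta_5^{-2}q^{3n})$ to rationalise the denominator to $1-q^{15n}$ (and similarly for the second sum), yielding the decomposition (\ref{PP3Zeta5}) into the functions $U^3_5(b)$ and $V^3_5(b)$ whose denominators are pure powers of $q$. Your partial-fraction route leaves $\zeta_5$ in the denominator, so after splitting $n$ modulo $5$ you would obtain $T$-series with $z=\zeta_5 q^{a}$, and Chan's identity would then output theta products $\jacprod{\zeta_5 q^{a}}{q^{15}}$ that still need to be reduced. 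Your remark that the $n\equiv 0$ class ``produces $S^{*}$- and $T^{*}$-type contributions'' is not correct in your setup: those starred series have denominator $1-q^{n}$, not $1-\zeta_5^{\pm1}q^{n}$, and they arise in the paper precisely \emph{because} the denominator has already been rationalised.

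More substantively, you are missing the $h(z,q)$ machinery. After Lemma~\ref{LemmaFor3b2} writes each $V^3_5(3b+2)-q^{b+1}U^3_5(3b+4)$ as a sum of theta quotients, there is a leftover term $h(q^{a},q^{75})$ coming from the degenerate $k=0$ class; Chan's identity alone does not dispose of it. The paper identifies $h$ as a logarithmic theta derivative (Lemma~\ref{LemmaLogThetaForGAndG*}) and then converts it to products via Lemma~\ref{LemmaGToProducts} and the telescoping in (\ref{EqHToProducts}), or, when $z^{3}$ is a power of $q$, replaces it by the weight-$1$ Eisenstein series $V_{\chi,1}$. Finally, the resulting identity between theta products is \emph{not} established by three-term relations: both sides are recognised as modular functions on $\Gamma_1(75)$, and the valence formula reduces the claim to checking the $q$-expansion out to $q^{199}$. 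Your proposal to close the argument with ``three-term addition relations'' is the step most likely to stall; the paper's finite-verification approach is what actually carries the weight.
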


\begin{theorem}\label{TheoremPP45}
\begin{align*}
	&\frac{1}{\aqprod{q}{q}{\infty}}
	\Parans{
		\sum_{n=-\infty}^\infty\frac{q^{3n^2+n}}
			{(1-\zeta_5q^{3n})(1-\zeta_5^{-1}q^{3n})}
		-\sum_{n=-\infty}^\infty \frac{q^{3n^2+7n+2}}
			{(1-\zeta_5q^{3n+1})(1-\zeta_5^{-1}q^{3n+1})}
	}
	\\
	&= \aqprod{q^{25}}{q^{25}}{\infty}
	\left(
		A_{450}(q^5)+qA_{451}(q^5)+q^2A_{452}(q^5)+q^3A_{453}(q^5)
	\right)
,
\end{align*}
where
\begin{align*}
	A_{450}(q)
	&=
		\frac{3+\zeta_5+\zeta_5^4}{5}\frac{\jacprod{q^2}{q^5}}{\jacprod{q}{q^5}^2} 
    	-q(1+\zeta_5+\zeta_5^4)\frac{\jacprod{q^2}{q^{15}}}{\jacprod{q,q^4,q^5,q^6}{q^{15}}}
	,\\
	A_{451}(q) 
	&= 
		\frac{-2+\zeta_5+\zeta_5^4}{5}\frac{1}{\jacprod{q}{q^5}} 
    	+\frac{1}{\jacprod{q^2,q^3,q^5}{q^{15}}}
	,\\
	A_{452}(q) 
	&= 
		\frac{1+2(\zeta_5+\zeta_5^4)}{5}\frac{1}{\jacprod{q^2}{q^5}} 
    	- \frac{1}{\jacprod{q,q^5,q^6}{q^{15}}}
	,\\
	A_{453}(q) 
	&=
	\frac{-1-2(\zeta_5+\zeta_5^4)}{5}\frac{\jacprod{q}{q^5}}{\jacprod{q^2}{q^5}^2} 
    - (\zeta+\zeta^4)q\frac{\jacprod{q}{q^{15}}}{\jacprod{q^2,q^3,q^5,q^7}{q^{15}}}
.
\end{align*}
\end{theorem}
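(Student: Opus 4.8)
\medskip
\noindent\textbf{Proof strategy.}
The plan is to establish Theorem~\ref{TheoremPP45} by the same Atkin--Swinnerton-Dyer style dissection used for Theorems~\ref{TheoremPP13}--\ref{TheoremPP35}, specialized to the Bailey pair $\beta^4$ and the root of unity $\zeta_5$; so I will describe the steps and indicate where the computation differs. The first step is to clear the quadratic denominators on the left-hand side. Using the partial fraction decomposition
\begin{align*}
	\frac{1}{(1-\zeta_5 q^m)(1-\zeta_5^{-1}q^m)}
	=
	\frac{1}{1-\zeta_5^{-2}}\cdot\frac{1}{1-\zeta_5 q^m}
	+\frac{1}{1-\zeta_5^{2}}\cdot\frac{1}{1-\zeta_5^{-1}q^m}
\end{align*}
with $m=3n$ in the first sum and $m=3n+1$ in the second, the left-hand side becomes a $\mathbb{Q}(\zeta_5)$-linear combination of generalized Lambert series of the shape $\sum_n q^{3n^2+n}(1-\zeta_5^{\pm1}q^{3n})^{-1}$ and $\sum_n q^{3n^2+7n+2}(1-\zeta_5^{\pm1}q^{3n+1})^{-1}$. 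After the reindexing $n\mapsto n+1$ in the second group, using $3n^2+7n+2=3(n+1)^2+(n+1)-2$, each of these is, up to a power of $q$ and the substitution $q\mapsto q^3$, a specialization of the series $T(z,w,q)$ in which $z$ equals $\zeta_5^{\pm1}$ times a power of $q$.

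Next I would carry out the $5$-dissection of these $T$-type series. Splitting each summation index into residue classes modulo $5$, reindexing, and applying the dissection lemmas for $S(z,w,q)$ and $T(z,w,q)$ that underlie the proofs of Theorems~\ref{TheoremPP13}--\ref{TheoremPP35}, one reduces to generalized Lambert series with base $q^{15}$ or $q^{25}$. The decisive input at this point is Chan's identity \cite{Chan} on generalized Lambert series: it collapses the surviving Lambert series into ratios of theta functions, which is precisely why the final formula for the $\beta^3$ and $\beta^4$, $\zeta_5$ cases carries the prefactor $\aqprod{q^{25}}{q^{25}}{\infty}$ and, in contrast to the $\zeta_3$ cases, contains no residual $S$-series. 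Reading off the components then gives $A_{450},\dots,A_{453}$ (together with the vanishing of the $q^{5n+4}$ component, which is itself part of the assertion). Because the left-hand side is fixed by $\zeta_5\mapsto\zeta_5^{-1}$ and $[\mathbb{Q}(\zeta_5):\mathbb{Q}(\zeta_5+\zeta_5^{-1})]=2$, only $1$ and $\zeta_5+\zeta_5^{4}$ can occur as coefficients, with $\zeta_5^{2}+\zeta_5^{-2}=-1-(\zeta_5+\zeta_5^{4})$; this is what accounts for expressions such as $\tfrac{3+\zeta_5+\zeta_5^{4}}{5}$ and $\tfrac{-1-2(\zeta_5+\zeta_5^{4})}{5}$.

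The last step is to match the two sides: the theta-product identities needed to rewrite the collapsed Lambert series in the stated $\jacprod{\cdot}{q^5}$ and $\jacprod{\cdot}{q^{15}}$ form can be verified against a sufficiently long $q$-expansion, since after the dissection everything lives in $\mathbb{Z}[\zeta_5][[q^5]]$ and the relevant theta functions are pinned down by finitely many coefficients. I expect the main obstacle to be twofold: first, identifying the precise specialization of Chan's identity that produces the cancellation of all $S$-type contributions (everything downstream is then forced); and second, the sheer bookkeeping of the fifth roots of unity together with the many theta quotients. A subsidiary technical point is justifying the rearrangement of the doubly infinite Lambert series --- splitting into $n\ge 0$ and $n<0$ --- so that the partial-fraction and reindexing manipulations above are legitimate.
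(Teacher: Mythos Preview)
Your outline captures the broad shape correctly --- dissect, invoke Chan's identity, then certify the resulting theta-product identities by checking enough of the $q$-expansion --- but the path the paper actually takes differs from yours at the very first step, and that difference propagates through the whole argument.

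You begin with a partial-fraction decomposition that leaves $\zeta_5$ in the denominators, producing $T$-type series with $z=\zeta_5^{\pm1}q^m$. The paper instead clears the roots of unity immediately: using $1-x^5=\prod_{j=0}^{4}(1-\zeta_5^j x)$ it rewrites the quadratic denominators so that only $1-q^{15n}$ and $1-q^{15n+5}$ remain, arriving at equation~(\ref{PP4Zeta5}), which expresses the left-hand side as $\frac{A+(\zeta_5+\zeta_5^4)B}{\aqprod{q}{q}{\infty}}$ with $A,B$ built from the auxiliary functions $U^3_5(b)$ and $V^3_5(b)$. The structural step is then Lemma~\ref{LemmaFor3b2}, which pairs each $V^3_5(3b+2)$ with the corresponding $q^{b+1}U^3_5(3b+4)$ and expresses their difference as $h(q^{3\ell^2-3b\ell-2\ell},q^{3\ell^2})$ plus a finite sum of theta quotients; Chan's identity enters here, inside the proof of that lemma, rather than being applied to $T(\zeta_5,\cdot,\cdot)$ directly. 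The $h$-terms are then converted to products by~(\ref{EqHToProducts}), with one exception: $h(q^{50},q^{75})$ has its argument at a pole of the product expansion and is instead recognized as the weight-one Eisenstein series $V_{\chi,1}(25\tau)$. After these reductions each of~(\ref{EqPP4Mod5Eq1}) and~(\ref{EqPP4Mod5Eq2}) becomes an identity among generalized eta quotients on $\Gamma_1(75)$, verified by the valence formula after checking the $q$-expansion to roughly $q^{200}$.

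Your route via partial fractions could in principle be made to work --- it is closer to the original Atkin--Swinnerton-Dyer treatment of the rank --- but it would require separate machinery to 5-dissect $T(\zeta_5,w,q)$, and you have not indicated what plays the role of Lemma~\ref{LemmaFor3b2} or how the special $h$-value at the ``bad'' argument is handled. The paper's approach buys you integer-coefficient Lambert series from the outset and a single reusable lemma that does all the heavy lifting.
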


\begin{theorem}\label{TheoremPP37}
\begin{align*}
	&
	\frac{1}{\aqprod{q}{q}{\infty}}
	\Parans{
		\sum_{n=-\infty}^\infty\frac{q^{3n^2+2n}}
			{(1-\zeta_7q^{3n})(1-\zeta_7^{-1}q^{3n})}
		-\sum_{n=-\infty}^\infty \frac{q^{3n^2+4n+1}}
			{(1-\zeta_7q^{3n+1})(1-\zeta_7^{-1}q^{3n+1})}
	}
	\\
	&= \aqprod{q^{49}}{q^{49}}{\infty}
	\left(
		A_{370}(q^7)+qA_{371}(q^7)+q^2A_{372}(q^7)+q^3A_{373}(q^7)+q^4A_{374}(q^7)
		+q^5A_{375}(q^7)+q^6A_{376}(q^6)
	\right)
,
\end{align*}
where
\begin{align*}
	A_{370}(q)
	&=
		\frac{6+3(\zeta_7+\zeta_7^6)+(\zeta_7^2+\zeta_7^5)}{7}		
		\frac{\jacprod{q^3}{q^7}}{\jacprod{q,q^2}{q^7}}
		-q^2(1+\zeta+\zeta^6)\frac{1}{\jacprod{q^6,q^7,q^8}{q^{21}}}	
	,\\
	A_{371}(q)
	&=
		\frac{-1+3(\zeta_7+\zeta_7^6)+(\zeta_7^2+\zeta_7^5)}{7}		
		\frac{1}{\jacprod{q}{q^7}}
	,\\
	A_{372}(q)
	&=
		\frac{5-(\zeta_7+\zeta_7^6)+2(\zeta_7^2+\zeta_7^5)}{7}		
		\frac{\jacprod{q^2}{q^7}}{\jacprod{q,q^3}{q^7}}
		-q(1+\zeta^2+\zeta^5)\frac{1}{\jacprod{q^3,q^7,q^{10}}{q^{21}}}	
	,\\
	A_{373}(q)
	&=
		\frac{-3+2(\zeta_7+\zeta_7^6)-4(\zeta_7^2+\zeta_7^5)}{7}		
		\frac{1}{\jacprod{q^2}{q^7}}
	,\\
	A_{374}(q)
	&=
		\frac{2+(\zeta_7+\zeta_7^6)+5(\zeta_7^2+\zeta_7^5)}{7}		
		\frac{1}{\jacprod{q^3}{q^7}}
	,\\
	A_{375}(q)
	&=
		(\zeta+\zeta^6)\frac{\jacprod{q^5}{q^{21}}}{\jacprod{q^3,q^4,q^7,q^7}{q^{21}}}
		+q^2(\zeta+\zeta^6)\frac{\jacprod{q^2}{q^{21}}}{\jacprod{q^3,q^7,q^7,q^{10}}{q^{21}}}		
	,\\
	A_{376}(q)
	&=
		\frac{-4-2(\zeta_7+\zeta_7^6)-3(\zeta_7^2+\zeta_7^5)}{7}		
		\frac{\jacprod{q}{q^7}}{\jacprod{q^2,q^3}{q^7}}
		+\frac{1+(1+\zeta_7^2+\zeta_7^5)}{\jacprod{q^2,q^7,q^9}{q^{21}}}	
.
\end{align*}
\end{theorem}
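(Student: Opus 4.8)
\emph{Proof proposal.} The plan is to carry the left side through the same pipeline used to prove Theorems~\ref{TheoremPP13}--\ref{TheoremPP45}: a partial fraction decomposition in $\zeta_7$, a dissection of the resulting generalized Lambert series into residue classes modulo $7$ in the style of Atkin and Swinnerton-Dyer \cite{AS}, and a collapse of those series to theta quotients by means of Chan's identity \cite{Chan} together with the classical product identities.

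First I would start from the displayed Lambert-series form of $PP_3(z,q)$, set $z=\zeta_7$, and break each of the two sums apart using
\[
\frac{1}{\Parans{1-\zeta_7 q^{m}}\Parans{1-\zeta_7^{-1}q^{m}}}
=\frac{1}{\zeta_7-\zeta_7^{-1}}\Parans{\frac{\zeta_7}{1-\zeta_7 q^{m}}-\frac{\zeta_7^{-1}}{1-\zeta_7^{-1}q^{m}}},
\]
which is valid since $\zeta_7^{2}\neq1$. Each of the four resulting sums is a specialisation of $T(z,w,q)$; for example $\sum_{n}\frac{q^{3n^{2}+2n}}{1-\zeta_7 q^{3n}}=T(\zeta_7,q^{-1},q^{3})$ and $\sum_{n}\frac{q^{3n^{2}+4n+1}}{1-\zeta_7 q^{3n+1}}=q\,T(\zeta_7 q,q,q^{3})$, with the analogous identities for $\zeta_7^{-1}$. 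So the left side is $\frac{1}{\aqprod{q}{q}{\infty}}$ times an explicit $\zeta_7$-linear combination of four such $T$-series.

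Next I would dissect each of these $T$-series modulo $7$. For a sum $\sum_{n}\frac{q^{an^{2}+bn+c}}{1-\zeta_7 q^{dn+e}}$ this means sorting $n$ into residue classes and, after completing the square in the new summation index, writing each inner sum as a translate of one of $S$, $T$, $S^{*}$, $T^{*}$ (or of a more exotic Lambert series of the same shape) in a power of $q$, plus theta prefactors produced by applying the Jacobi triple product and the quintuple product identities to the polar contributions. Sorting the outcome by the residue of the exponent modulo $7$ then puts the left side in the shape $\sum_{j=0}^{6}q^{j}C_{j}(q^{7})$ with each $C_{j}$ a combination of such Lambert series and Jacobi products in $q^{7}$ and $q^{21}$.

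The decisive step is to show $C_{j}=\aqprod{q^{49}}{q^{49}}{\infty}A_{37j}(q^{7})$ for each $j$, and here I would invoke the generalized Lambert series identity of Chan \cite{Chan}. Because the quadratic exponents of $PP_3$ are of the $3n^{2}$ type, not of the $6n^{2}$ type of $PP_1$ and $PP_2$ (whose dissections retain genuine $S$-series, as in Theorems~\ref{TheoremPP13}--\ref{TheoremPP25}), the relevant specialisations of Chan's identity rewrite \emph{every} Lambert series surviving the dissection as a quotient of theta functions, so that after multiplying by $\frac{1}{\aqprod{q}{q}{\infty}}$ and collecting, the answer is pure theta and the factor $\aqprod{q^{49}}{q^{49}}{\infty}$ comes out, exactly as in Theorems~\ref{TheoremPP35} and~\ref{TheoremPP45}. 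What then remains is bookkeeping: reducing the theta quotients to the normal forms displayed in the $A_{37j}$ with the Jacobi triple and quintuple product identities, and simplifying the scalar coefficients to the stated combinations of $\zeta_7+\zeta_7^{6}$ and $\zeta_7^{2}+\zeta_7^{5}$ using $1+(\zeta_7+\zeta_7^{6})+(\zeta_7^{2}+\zeta_7^{5})+(\zeta_7^{3}+\zeta_7^{4})=0$; one should also record that each doubly infinite Lambert series at issue converges for $0<\Pipes{q}<1$ once $z$ avoids its poles, so the rearrangements are legitimate. I expect this collapse to be the main obstacle --- identifying exactly which specialisations of Chan's identity kill precisely the Lambert series thrown up by the mod $7$ dissection, and then pushing a sizeable amount of theta-function and root-of-unity algebra through without error --- while the partial fractions and the sorting into residue classes are essentially mechanical.
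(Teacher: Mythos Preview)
Your initial decomposition differs from the paper's: rather than partial fractions in $\zeta_7$, the paper multiplies numerator and denominator by the remaining factors of $1-q^{21n}$ (using $1-x^{7}=\prod_{j}(1-\zeta_7^{j}x)$) to obtain the $U^{3}_{7}(b)$, $V^{3}_{7}(b)$ decomposition recorded in (\ref{PP3Zeta7}). It then sets $A,B,C$ so that the left side equals $\bigl(A+(\zeta_7+\zeta_7^{6})B+(1+\zeta_7^{2}+\zeta_7^{5})C\bigr)/\aqprod{q}{q}{\infty}$ and applies Lemma~\ref{LemmaFor3b2} to each of $A,B,C$. That lemma is where Chan's identity (Lemma~\ref{ChanLemma2}) enters, and it produces, besides explicit theta quotients, the limit functions $h(z,q^{3\ell^{2}})$; these are not quotients of theta functions, and one of them, $h(q^{49},q^{147})$, is even singular in the sense that it must be identified with the weight-$1$ Eisenstein form $V_{\chi,1}(49\tau)$. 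Converting the remaining $h$'s to products uses the telescoping identity (\ref{EqHToProducts}) derived from Lemma~\ref{LemmaGToProducts}. Your outline skips this $h$-layer entirely.

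The more serious gap is your last step. You propose to finish by ``reducing the theta quotients to the normal forms displayed in the $A_{37j}$ with the Jacobi triple and quintuple product identities'' plus bookkeeping in $\mathbb{Z}[\zeta_7]$. The paper says explicitly that this does not work: after Lemma~\ref{LemmaFor3b2} and the $h$-expansion one is left with three product identities (one for each of $A,B,C$) that are \emph{not} consequences of the classical product identities alone. The paper instead recognises every term as a generalized eta quotient, hence a modular function on $\Gamma_1(147)$ (and for the $B$-identity, because of the $V_{\chi,1}(49\tau)$ factor, on $\Gamma_1(441)$), computes lower bounds on orders at all cusps, and invokes the valence formula to reduce each identity to a finite $q$-expansion check out to $q^{779}$, $q^{7804}$, and $q^{773}$ respectively. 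Without that modular-function machinery your argument has no mechanism for closing the final identities, so as written the proposal does not go through.
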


We recognize
$\frac{\aqprod{q}{q}{\infty}}{\aqprod{zq,q/z}{q}{\infty}}$ as the generating function for the crank
of partitions. The dissections for the crank at roots of
unity are well known by the work of Garvan \cite{Garvan1}. 
We start with $\zeta_3$. Since 
$\frac{1}{\aqprod{\zeta_3q,\zeta_3^{-1}q}{q}{\infty}}
=\frac{\aqprod{q}{q}{\infty}}{\aqprod{q^3}{q^3}{\infty}}$, 
and by Euler's Pentagonal Numbers Theorem along with the Jacobi Triple
Product Identity
\begin{align*}
	\aqprod{q}{q}{\infty}
	&=
	\aqprod{q^{27}}{q^{27}}{\infty}
	\Parans{
		\jacprod{q^{12}}{q^{27}}
		-q\jacprod{q^{6}}{q^{27}}
		-q^2\jacprod{q^{3}}{q^{27}}
	}
,
\end{align*}
we have
\begin{align}
	\label{EquationCrank3Dissection}
	\frac{\aqprod{q}{q}{\infty}}{\aqprod{\zeta_3q,\zeta_3^{-1}q}{q}{\infty}}
	&=
	\frac{\aqprod{q^{27}}{q^{27}}{\infty}^2}{\aqprod{q^3}{q^3}{\infty}}
	\left(
		\jacprod{q^{12}}{q^{27}}^2 + 2q^3\jacprod{q^3,q^6}{q^{27}} 	
		-2q\jacprod{q^6,q^{12}}{q^{27}}+q^4\jacprod{q^3}{q^{27}}^2
		\right.\nonumber\\&\quad\left.
		-2q^2\jacprod{q^3,q^{12}}{q^{27}}+q^2\jacprod{q^6}{q^{27}}^2
		\right)
.
\end{align}
As in (3.8) and Theorem 5.1 of \cite{Garvan1} we have 
\begin{align}
	\label{EquationCrank5Dissection}
	\frac{\aqprod{q}{q}{\infty}}
		{\aqprod{\zeta_5q,\zeta_5^{-1}q}{q}{\infty}}
	&=
	\aqprod{q^{25}}{q^{25}}{\infty}
	\left(
		\frac{\jacprod{q^{10}}{q^{25}}}{\jacprod{q^{5}}{q^{25}}^2}
		+(\zeta_5+\zeta_5^4-1)q\frac{1}{\jacprod{q^{5}}{q^{25}}}
		-(\zeta_5+\zeta_5^4+1)q^2\frac{1}{\jacprod{q^{10}}{q^{25}}}
		\right.\nonumber\\&\left.\quad
		-(\zeta_5+\zeta_5^4)q^3\frac{\jacprod{q^{5}}{q^{25}}}{\jacprod{q^{10}}{q^{25}}^2}
	\right)
	,\\
	\label{EquationCrank7Dissection}
	\frac{\aqprod{q}{q}{\infty}}
		{\aqprod{\zeta_7q,\zeta_7^{-1}q}{q}{\infty}}
	&=
	\aqprod{q^{49}}{q^{49}}{\infty}
	\left(
		\frac{\jacprod{q^{21}}{q^{49}}}{\jacprod{q^{7},q^{14}}{q^{49}}}
		+(\zeta_7+\zeta_7^6-1)q\frac{1}{\jacprod{q^{7}}{q^{49}}}
		+(\zeta_7^2+\zeta_7^5)q^2\frac{\jacprod{q^{14}}{q^{49}}}{\jacprod{q^7,q^{21}}{q^{49}}}
		\right.\nonumber\\&\left.\quad
		+(\zeta_7^3+\zeta_7^4+1)q^3\frac{1}{\jacprod{q^{14}}{q^{49}}}
		-(\zeta_7+\zeta_7^6)q^4\frac{1}{\jacprod{q^{21}}{q^{49}}}
		-(\zeta_7^2+\zeta_7^5+1)q^6\frac{\jacprod{q^{7}}{q^{49}}}{\jacprod{q^{14},q^{21}}{q^{49}}}
	\right)
.
\end{align}

Dividing (\ref{EquationCrank3Dissection}) by $(1-\zeta_3)(1-\zeta_3^{-1})$,
(\ref{EquationCrank5Dissection}) by $(1-\zeta_5)(1-\zeta_5^{-1})$, and
(\ref{EquationCrank7Dissection}) by $(1-\zeta_7)(1-\zeta_7^{-1})$
along with Theorems \ref{TheoremPP13} through \ref{TheoremPP37} gives the
following formulas for $PP_1(z,q)$, $PP_2(z,q)$, $PP_3(z,q)$, and 
$PP_4(z,q)$.

\begin{theorem}
\begin{align*}
	PP_1(\zeta_3,q)
	&=
		\frac{q^4}{\aqprod{q^{27}}{q^{27}}{\infty}\jacprod{q^{12}}{q^{27}}}
		\Parans{S(-9, -84, 27) - q^{84}S(33, 84, 27) }	
		+
		q\frac{\aqprod{q^{27}}{q^{27}}{\infty}^2\jacprod{q^6,q^{12}}{q^{27}}}
			{\aqprod{q^3}{q^3}{\infty}}
	,\\&\quad
		-\frac{q^8}{\aqprod{q^{27}}{q^{27}}{\infty}\jacprod{q^3}{q^{27}}}
		\Parans{S(9, -6, 27) - q^{6}S(12, 6, 27) }
		+
		q^2\frac{\aqprod{q^{27}}{q^{27}}{\infty}^2\jacprod{q^3,q^{12}}{q^{27}}}
			{\aqprod{q^3}{q^3}{\infty}}
		\\&\quad
		-q^2\frac{\aqprod{q^{27}}{q^{27}}{\infty}^2\jacprod{q^6}{q^{27}}^2}
			{\aqprod{q^3}{q^3}{\infty}}
.
\end{align*}
\end{theorem}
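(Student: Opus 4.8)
The plan is to combine the formula for $PP_1(z,q)$ obtained above via Bailey's Lemma with Theorem \ref{TheoremPP13} and the crank dissection (\ref{EquationCrank3Dissection}), and then simplify. Recall that
\begin{align*}
	PP_1(z,q)
	&=
	\frac{1}{\aqprod{q}{q}{\infty}}
	\Parans{
		\sum_{n=-\infty}^\infty\frac{q^{6n^2+2n}}{(1-zq^{3n})(1-z^{-1}q^{3n})}
		-\sum_{n=-\infty}^\infty \frac{q^{6n^2+8n+2}}{(1-zq^{3n+1})(1-z^{-1}q^{3n+1})}
	}
	-\frac{\aqprod{q}{q}{\infty}}{\aqprod{z,z^{-1}}{q}{\infty}}
	.
\end{align*}
First I would set $z=\zeta_3$: the bracketed double sum is then exactly the left-hand side of Theorem \ref{TheoremPP13}, so it equals $A_{130}(q^3)+qA_{131}(q^3)+q^2A_{132}(q^3)$.

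Next I would dispose of the subtracted term. Since $\aqprod{z,z^{-1}}{q}{\infty}=(1-z)(1-z^{-1})\aqprod{zq,z^{-1}q}{q}{\infty}$ and $(1-\zeta_3)(1-\zeta_3^{-1})=3$, we have $\frac{\aqprod{q}{q}{\infty}}{\aqprod{\zeta_3,\zeta_3^{-1}}{q}{\infty}}=\frac13\,\frac{\aqprod{q}{q}{\infty}}{\aqprod{\zeta_3 q,\zeta_3^{-1}q}{q}{\infty}}$, which by (\ref{EquationCrank3Dissection}) equals $\frac13\,\frac{\aqprod{q^{27}}{q^{27}}{\infty}^2}{\aqprod{q^3}{q^3}{\infty}}$ times the six-theta combination $\jacprod{q^{12}}{q^{27}}^2+2q^3\jacprod{q^3,q^6}{q^{27}}-2q\jacprod{q^6,q^{12}}{q^{27}}+q^4\jacprod{q^3}{q^{27}}^2-2q^2\jacprod{q^3,q^{12}}{q^{27}}+q^2\jacprod{q^6}{q^{27}}^2$.

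The remaining work is mechanical. Substitute $q\mapsto q^3$ throughout the definitions of $A_{130},A_{131},A_{132}$ from Theorem \ref{TheoremPP13} --- under the convention $\SSeries{a}{b}{c}=S(q^a,q^b,q^c)$ this sends, for instance, $\SSeries{-3}{-28}{9}$ to $\SSeries{-9}{-84}{27}$ and $q^{28}$ to $q^{84}$ --- then expand $A_{130}(q^3)+qA_{131}(q^3)+q^2A_{132}(q^3)$ and subtract $\frac13$ of the dissection from (\ref{EquationCrank3Dissection}), grouping by the theta quotient multiplying each coefficient. One checks that the pieces attached to $\jacprod{q^{12}}{q^{27}}^2$, $q^3\jacprod{q^3,q^6}{q^{27}}$, and $q^4\jacprod{q^3}{q^{27}}^2$ cancel identically ($\frac13-\frac13$ in each case), while the coefficients of $q\jacprod{q^6,q^{12}}{q^{27}}$, $q^2\jacprod{q^3,q^{12}}{q^{27}}$, and $q^2\jacprod{q^6}{q^{27}}^2$ collapse to $+1$, $+1$, and $-1$ (from $\frac13+\frac23$, $\frac13+\frac23$, and $-\frac23-\frac13$). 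The two $S$-series blocks, arising from $qA_{131}(q^3)$ and $q^2A_{132}(q^3)$, have no partner in the crank term and survive verbatim as the $\SSeries{-9}{-84}{27}$ and $\SSeries{9}{-6}{27}$ contributions. Collecting what remains yields the asserted formula.

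I do not anticipate a genuine obstacle here: the substance lies in Theorem \ref{TheoremPP13} and the classical crank dissection (\ref{EquationCrank3Dissection}), which may be assumed. What needs care is purely bookkeeping --- correctly propagating the scaling $q\mapsto q^3$ through the abuse of notation for the $S$-series, using the normalization $(1-\zeta_3)(1-\zeta_3^{-1})=3$, and verifying that the fractional coefficients $\frac13$ and $\frac23$ recombine into the integer coefficients appearing in the statement. A truncated power-series comparison is a quick safeguard against any sign or indexing slip.
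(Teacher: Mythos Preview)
Your proposal is correct and follows exactly the approach the paper takes: combine Theorem~\ref{TheoremPP13} with the crank dissection (\ref{EquationCrank3Dissection}) divided by $(1-\zeta_3)(1-\zeta_3^{-1})=3$, then do the coefficient bookkeeping. One tiny slip in your description: the cancellation attached to $q^3\jacprod{q^3,q^6}{q^{27}}$ is $\tfrac23-\tfrac23$, not $\tfrac13-\tfrac13$, but this does not affect anything.
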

\begin{remark}
The congruence $pp_1(3n)\equiv 0\pmod{3}$ then follows as 
$PP_1(\zeta_3,q)$ has no $q^{3n}$ terms.
\end{remark}

\begin{theorem}
\begin{align*}
	PP_2(\zeta_3,q)	
	&=
		\frac{q^{-3}}{\aqprod{q^{27}}{q^{27}}{\infty}\jacprod{q^{6}}{q^{27}}}
			\left(\SSeries{-9}{-66}{27}-q^{66}\SSeries{24}{66}{27}\right)
		-\frac{\aqprod{q^{27}}{q^{27}}{\infty}^2\jacprod{q^{12}}{q^{27}}^2}
			{\aqprod{q^3}{q^3}{\infty}} 
		\\&\quad
		-q^3\frac{\aqprod{q^{27}}{q^{27}}{\infty}^2\jacprod{q^3,q^6}{q^{27}}}
			{\aqprod{q^3}{q^3}{\infty}} 	
		+\frac{q^{8}}{\aqprod{q^{27}}{q^{27}}{\infty}\jacprod{q^3}{q^{27}}}
			\left(\SSeries{9}{-6}{27}-q^{6}\SSeries{12}{6}{27}\right)
		\\&\quad
		+q^2\frac{\aqprod{q^{27}}{q^{27}}{\infty}^2\jacprod{q^3,q^{12}}{q^{27}}}
			{\aqprod{q^3}{q^3}{\infty}} 
.
\end{align*}
\end{theorem}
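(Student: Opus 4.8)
The plan is to obtain $PP_2(\zeta_3,q)$ by specializing the formula for $PP_2(z,q)$ derived above,
\begin{align*}
	PP_2(z,q)
	&=
	\frac{1}{\aqprod{q}{q}{\infty}}
	\Parans{
		\sum_{n=-\infty}^\infty\frac{q^{6n^2+n}}{(1-zq^{3n})(1-z^{-1}q^{3n})}
		-\sum_{n=-\infty}^\infty \frac{q^{6n^2+5n+1}}{(1-zq^{3n+1})(1-z^{-1}q^{3n+1})}
	}
	-\frac{\aqprod{q}{q}{\infty}}{\aqprod{z,z^{-1}}{q}{\infty}}
\end{align*}
at $z=\zeta_3$. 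The two infinite sums are exactly the quantity whose $3$-dissection is recorded in Theorem~\ref{TheoremPP23}, so they contribute $A_{230}(q^3)+qA_{231}(q^3)+q^2A_{232}(q^3)$. For the subtracted term, the identities $\aqprod{z,z^{-1}}{q}{\infty}=(1-z)(1-z^{-1})\aqprod{zq,z^{-1}q}{q}{\infty}$ and $(1-\zeta_3)(1-\zeta_3^{-1})=3$ give
\begin{align*}
	\frac{\aqprod{q}{q}{\infty}}{\aqprod{\zeta_3,\zeta_3^{-1}}{q}{\infty}}
	=\frac{1}{3}\,\frac{\aqprod{q}{q}{\infty}}{\aqprod{\zeta_3q,\zeta_3^{-1}q}{q}{\infty}},
\end{align*}
and the right-hand side is the crank generating function, $3$-dissected in \eqref{EquationCrank3Dissection}.

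What remains is to add these two ingredients and sort by powers of $q$ modulo $3$. Each $A_{23r}$ is a power series in $q$ (the $S$-blocks carry only a bounded negative-power part, e.g.\ the $q^{-1}$ in $A_{230}$), so $q^rA_{23r}(q^3)$ feeds only the residue-$r$ class, while each summand of \eqref{EquationCrank3Dissection} is already displayed as $q^j$ with $0\le j\le 4$ times a series in $q^3$, feeding the residue-$(j\bmod 3)$ class. Under $q\mapsto q^3$ one has $\SSeries{a}{b}{c}\mapsto\SSeries{3a}{3b}{3c}$, so the $S$-block of $A_{230}(q^3)$ becomes $\frac{q^{-3}}{\aqprod{q^{27}}{q^{27}}{\infty}\jacprod{q^{6}}{q^{27}}}\Parans{\SSeries{-9}{-66}{27}-q^{66}\SSeries{24}{66}{27}}$ and that of $q^2A_{232}(q^3)$ becomes $\frac{q^{8}}{\aqprod{q^{27}}{q^{27}}{\infty}\jacprod{q^{3}}{q^{27}}}\Parans{\SSeries{9}{-6}{27}-q^{6}\SSeries{12}{6}{27}}$; these account for the two $S$-blocks in the claimed formula, as \eqref{EquationCrank3Dissection} contributes no $S$-terms. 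For the theta-product parts, the terms arising from $A_{230},A_{231},A_{232}$ combine with the corresponding terms of $-\tfrac13$ times \eqref{EquationCrank3Dissection}; for instance the coefficient of $q^2\aqprod{q^{27}}{q^{27}}{\infty}^2\jacprod{q^3,q^{12}}{q^{27}}/\aqprod{q^3}{q^3}{\infty}$ becomes $\tfrac13+\tfrac23=1$, while that of $q^2\aqprod{q^{27}}{q^{27}}{\infty}^2\jacprod{q^{6}}{q^{27}}^2/\aqprod{q^3}{q^3}{\infty}$ becomes $\tfrac13-\tfrac13=0$ and drops out. Carrying this through the residue-$0$ and residue-$2$ classes reproduces the asserted expression.

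I do not anticipate a real obstacle: the whole argument is a finite, completely explicit collation whose only inputs are Theorem~\ref{TheoremPP23} and the classical crank dissection \eqref{EquationCrank3Dissection} of Garvan. The one point needing care is the bookkeeping of $q$-shifts — the external $q^r$, the substitution $q\mapsto q^3$ inside the $S$-series and Jacobi products, and the $q$-powers already buried in the $A_{23r}$ — so that every term is assigned its correct residue class. A byproduct of the same computation is that the residue-$1$ part of $PP_2(\zeta_3,q)$ vanishes identically: the coefficient of $q\aqprod{q^{27}}{q^{27}}{\infty}^2\jacprod{q^6,q^{12}}{q^{27}}/\aqprod{q^3}{q^3}{\infty}$ works out to $-\tfrac23+\tfrac23=0$ and that of $q^4\aqprod{q^{27}}{q^{27}}{\infty}^2\jacprod{q^3}{q^{27}}^2/\aqprod{q^3}{q^3}{\infty}$ to $\tfrac13-\tfrac13=0$, which is precisely the statement that $PP_2(\zeta_3,q)$ contains no $q^{3n+1}$ and hence that $pp_2(3n+1)\equiv 0\pmod 3$.
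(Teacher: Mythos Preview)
Your proposal is correct and follows exactly the paper's approach: specialize $PP_2(z,q)$ at $z=\zeta_3$, invoke Theorem~\ref{TheoremPP23} for the Lambert-series part, subtract $\tfrac13$ times the crank $3$-dissection \eqref{EquationCrank3Dissection}, and collate by residue classes mod $3$. The explicit coefficient checks you give (including the vanishing of the residue-$1$ piece) are accurate and match the paper's stated derivation.
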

\begin{remark}
The congruence $pp_2(3n+1)\equiv 0\pmod{3}$ then follows as 
$PP_2(\zeta_3,q)$ has no $q^{3n+1}$ terms.
\end{remark}

\begin{theorem}
\begin{align*}
	PP_2(\zeta_5,q)
	&=
		-\frac{q^{-15}}{\aqprod{q^{75}}{q^{75}}{\infty}\jacprod{q^5}{q^{75}}}
			\Parans{S(-15,-140,75) - q^{140}S(55,140,75)}
	\\&\quad
		+(\zeta_5+\zeta_5^4)\frac{q^{10}}{\aqprod{q^{75}}{q^{75}}{\infty}\jacprod{q^{35}}{q^{75}}}
			\Parans{S(15,-70,75) - q^{70}S(50,70,75)}
	\\&\quad
		-
		\frac{\aqprod{q^{75}}{q^{75}}{\infty}\jacprod{q^{10}}{q^{25}}}
			{\jacprod{q^{10},q^{15}}{q^{75}}\jacprod{q^5}{q^{25}}}
		-
		(\zeta_5+\zeta_5^4)q^{10}\frac{\aqprod{q^{75}}{q^{75}}{\infty}\jacprod{q^5}{q^{75}}}
			{\jacprod{q^{15},q^{20},q^{25}}{q^{75}}}	
		+
		q^2\frac{\aqprod{q^{25}}{q^{25}}{\infty}}{\jacprod{q^{10}}{q^{25}}}
	\\&\quad
		+(\zeta_5+\zeta_5^4)\frac{q^{-7}}{\aqprod{q^{75}}{q^{75}}{\infty}\jacprod{q^{10}}{q^{75}}}
			\Parans{\SSeries{-15}{-170}{75} - q^{170}\SSeries{70}{170}{75}}
	\\&\quad
		+(\zeta_5+\zeta_5^4)q^3\frac{\aqprod{q^{25}}{q^{25}}{\infty}\jacprod{q^5}{q^{25}}}
			{\jacprod{q^{10}}{q^{25}}^2}
		-(\zeta_5+\zeta_5^4)q^{-2}\frac{\aqprod{q^{75}}{q^{75}}{\infty}\jacprod{q^{20},q^{30}}{q^{75}}}
			{\jacprod{q^5,q^{15},q^{15},q^{35}}{q^{75}}}
	\\&\quad
		+\frac{q^{14}}{\aqprod{q^{75}}{q^{75}}{\infty}\jacprod{q^{20}}{q^{75}}}
			\Parans{\SSeries{15}{-40}{75} - q^{40}\SSeries{35}{40}{75}}
		+
		q^4\frac{\aqprod{q^{75}}{q^{75}}{\infty}}{\jacprod{q^{15},q^{20}}{q^{75}}}
.
\end{align*}
\end{theorem}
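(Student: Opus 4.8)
The plan is to read off this formula exactly as the paragraph introducing these $PP_i(\zeta,q)$ formulas suggests: divide the crank dissection \eqref{EquationCrank5Dissection} by $(1-\zeta_5)(1-\zeta_5^{-1})$ and combine it with Theorem \ref{TheoremPP25}. First I would set $z=\zeta_5$ in the expression for $PP_2(z,q)$ derived above; since $\aqprod{z,z^{-1}}{q}{\infty}=(1-z)(1-z^{-1})\aqprod{zq,z^{-1}q}{q}{\infty}$, this gives
\begin{align*}
	PP_2(\zeta_5,q)
	&=
	A_{250}(q^5)+qA_{251}(q^5)+q^2A_{252}(q^5)+q^3A_{253}(q^5)+q^4A_{254}(q^5)
	\\
	&\quad
	-\frac{1}{(1-\zeta_5)(1-\zeta_5^{-1})}\cdot\frac{\aqprod{q}{q}{\infty}}{\aqprod{\zeta_5q,\zeta_5^{-1}q}{q}{\infty}},
\end{align*}
because the first line is precisely the right-hand side of Theorem \ref{TheoremPP25}. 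Then I would substitute \eqref{EquationCrank5Dissection}, expand each $A_{25j}(q^5)$ through $q\mapsto q^5$, and regroup: the crank dissection contributes terms only in the $q$-power classes $q^0,q^1,q^2,q^3$, which merge with the five classes coming from the $q^jA_{25j}(q^5)$, and collecting these classes yields the stated formula.

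The one genuinely arithmetic ingredient is simplifying the prefactor $\frac{1}{(1-\zeta_5)(1-\zeta_5^{-1})}$ together with the coefficients it produces. Writing $c=\zeta_5+\zeta_5^{-1}=\zeta_5+\zeta_5^4$, the relation $1+\zeta_5+\zeta_5^2+\zeta_5^3+\zeta_5^4=0$ gives $c^2+c-1=0$, hence $(1-\zeta_5)(1-\zeta_5^{-1})=2-c$ and $\frac{1}{2-c}=\frac{3+c}{5}$. Using this and $c^2=1-c$ repeatedly, every coefficient that appears collapses to the form in the statement. For instance, in the $q^2$-class the $A_{252}(q^5)$ term contributes $\frac{1-3c}{5}$ while the crank contributes $-\frac{3+c}{5}\bigl(-(1+c)\bigr)=\frac{3c+4}{5}$, and the two sum to $1$, matching the coefficient of $q^2\frac{\aqprod{q^{25}}{q^{25}}{\infty}}{\jacprod{q^{10}}{q^{25}}}$; similarly, in the $q^0$-class the two copies of $\frac{\aqprod{q^{25}}{q^{25}}{\infty}\jacprod{q^{10}}{q^{25}}}{\jacprod{q^5}{q^{25}}^2}$ enter with coefficients $\frac{3+c}{5}$ and $-\frac{3+c}{5}$ and cancel, which is why that product is absent from the final formula, whereas in the $q^3$-class the analogous combination $\frac{-1+3c}{5}+\frac{(3+c)c}{5}=c$ survives. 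The Lambert-series blocks $S(\cdot,\cdot,\cdot)$ transfer verbatim apart from $q\mapsto q^5$ and the $q^j$ prefactor of $q^jA_{25j}(q^5)$, producing the formally negative powers $q^{-15},q^{-7},q^{-2}$; these are legitimate inside the $5$-dissection, the negative power being cancelled by the $n=0$ pole of the corresponding $S$-series exactly as already happens in Theorem \ref{TheoremPP25}.

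I expect the main obstacle to be purely organizational: tracking five residue classes, the $q$-power shifts, and the $\mathbb{Z}[\zeta_5]$-coefficient arithmetic simultaneously without error, and verifying that every term of Theorem \ref{TheoremPP25} and of \eqref{EquationCrank5Dissection} lands in the residue class it is assigned to. There is no new analytic or combinatorial input beyond those two results. A convenient check to close with is the $q^1$-class: the contributions are $\frac{-2+c}{5}\frac{\aqprod{q^{25}}{q^{25}}{\infty}}{\jacprod{q^5}{q^{25}}}$ from $A_{251}(q^5)$ and $-\frac{3+c}{5}(c-1)\frac{\aqprod{q^{25}}{q^{25}}{\infty}}{\jacprod{q^5}{q^{25}}}=-\frac{c-2}{5}\frac{\aqprod{q^{25}}{q^{25}}{\infty}}{\jacprod{q^5}{q^{25}}}$ from the crank, and these cancel, so $PP_2(\zeta_5,q)$ has no $q^{5n+1}$ terms — which is the congruence $pp_2(5n+1)\equiv0\pmod5$ of Theorem 1.
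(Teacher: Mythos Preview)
Your approach is exactly the one the paper uses: the paper states only that dividing \eqref{EquationCrank5Dissection} by $(1-\zeta_5)(1-\zeta_5^{-1})$ and combining with Theorem~\ref{TheoremPP25} yields this formula, and your proposal carries out precisely that computation, including the $\mathbb{Z}[\zeta_5]$ coefficient arithmetic (which you have checked correctly in the sample residue classes). There is nothing to add.
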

\begin{remark}
The congruence $pp_2(5n+1)\equiv 0\pmod{5}$ then follows as
$PP_2(\zeta_5,q)$ has no $q^{5n+1}$ terms.
\end{remark}

\begin{theorem}
\begin{align*}
	PP_3(\zeta_5,q)
	&= \aqprod{q^{25}}{q^{25}}{\infty}
	\left(
		-q^5\frac{\jacprod{q^{10}}{q^{75}}}{\jacprod{q^5,q^{20},q^{25},q^{30}}{q^{75}}}
		-(\zeta_5+\zeta_5^4)q^6\frac{1}{\jacprod{q^{15},q^{25},q^{35}}{q^{75}}}
		\right.\\&\quad
		+(1+\zeta_5+\zeta_5^4)q^2\frac{1}{\jacprod{q^{10}}{q^{25}}} 
		-(\zeta_5+\zeta_5^4)q^2\frac{1}{\jacprod{q^5,q^{25},q^{30}}{q^{75}}}
		-q^8\frac{\jacprod{q^5}{q^{75}}}{\jacprod{q^{10},q^{15},q^{25},q^{35}}{q^{75}}}
		\\&\quad\left.
		+ (\zeta+\zeta^4)q^3\frac{\jacprod{q^{20}}{q^{75}}}{\jacprod{q^{10},q^{15},q^{25},q^{35}}{q^{75}}}
	\right)
.
\end{align*}
\end{theorem}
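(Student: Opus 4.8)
The plan is to substitute Theorem~\ref{TheoremPP35} and the crank $5$-dissection \eqref{EquationCrank5Dissection} into the identity
\begin{align*}
	PP_3(\zeta_5,q)
	&=
	\frac{1}{\aqprod{q}{q}{\infty}}
	\Parans{
		\sum_{n=-\infty}^\infty\frac{q^{3n^2+2n}}{(1-\zeta_5q^{3n})(1-\zeta_5^{-1}q^{3n})}
		-\sum_{n=-\infty}^\infty \frac{q^{3n^2+4n+1}}{(1-\zeta_5q^{3n+1})(1-\zeta_5^{-1}q^{3n+1})}
	}
	-\frac{\aqprod{q}{q}{\infty}}{\aqprod{\zeta_5,\zeta_5^{-1}}{q}{\infty}}
\end{align*}
and then sort the coefficients of $q$ by residue class modulo $5$. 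By Theorem~\ref{TheoremPP35} the first summand equals $\aqprod{q^{25}}{q^{25}}{\infty}\Parans{A_{350}(q^5)+qA_{351}(q^5)+q^2A_{352}(q^5)+q^3A_{353}(q^5)}$, and since $\aqprod{\zeta_5,\zeta_5^{-1}}{q}{\infty}=(1-\zeta_5)(1-\zeta_5^{-1})\aqprod{\zeta_5q,\zeta_5^{-1}q}{q}{\infty}$, the term $\frac{\aqprod{q}{q}{\infty}}{\aqprod{\zeta_5,\zeta_5^{-1}}{q}{\infty}}$ is $\frac{1}{(1-\zeta_5)(1-\zeta_5^{-1})}$ times the right-hand side of \eqref{EquationCrank5Dissection}. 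Both contributions are then $\aqprod{q^{25}}{q^{25}}{\infty}$ times a sum of the shape $\sum_{j=0}^{3}q^{j}f_j(q^5)$, so $PP_3(\zeta_5,q)$ is $\aqprod{q^{25}}{q^{25}}{\infty}$ times the difference of two such sums.

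The next step is cyclotomic arithmetic. Set $s=\zeta_5+\zeta_5^{-1}=\zeta_5+\zeta_5^4$; from $1+\zeta_5+\zeta_5^2+\zeta_5^3+\zeta_5^4=0$ one gets $s^2+s-1=0$, hence $(1-\zeta_5)(1-\zeta_5^{-1})=2-s$ and $\frac{1}{2-s}=\frac{3+s}{5}$. Subtracting the two four-term sums class by class and simplifying with $s^2=1-s$: in the $j=0$ and $j=1$ classes the $\jacprod{\cdot}{q^{25}}$-type quotients coming from $A_{350}$, $A_{351}$ and the crank cancel completely (for instance $\frac{3+s}{5}(s-1)=\frac{s-2}{5}$ kills the coefficient of $\frac{1}{\jacprod{q^5}{q^{25}}}$); in the $j=2$ class the coefficient of $\frac{1}{\jacprod{q^{10}}{q^{25}}}$ collapses to $\frac{1+2s}{5}+\frac{(3+s)(s+1)}{5}=s+1=1+\zeta_5+\zeta_5^4$; and in each of these three classes the quotients that remain are already exactly those recorded in the theorem. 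The $j=4$ class is empty, since neither Theorem~\ref{TheoremPP35} nor \eqref{EquationCrank5Dissection} contributes a $q^4$ term, so $PP_3(\zeta_5,q)$ has no $q^{5n+4}$ coefficients; by the argument in the introduction this is precisely what yields $pp_3(5n+4)\equiv 0\pmod 5$.

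The only class not settled by cyclotomic arithmetic is $j=3$, and I expect it to be the main obstacle. There the difference has coefficient
\begin{align*}
	s\,\frac{\jacprod{q^{5}}{q^{25}}}{\jacprod{q^{10}}{q^{25}}^2}
	+(s-1)\,q^{5}\,\frac{\jacprod{q^{5}}{q^{75}}}{\jacprod{q^{10},q^{15},q^{25},q^{35}}{q^{75}}},
\end{align*}
whereas the theorem records the $j=3$ class as $-q^{5}\,\frac{\jacprod{q^{5}}{q^{75}}}{\jacprod{q^{10},q^{15},q^{25},q^{35}}{q^{75}}}+s\,\frac{\jacprod{q^{20}}{q^{75}}}{\jacprod{q^{10},q^{15},q^{25},q^{35}}{q^{75}}}$. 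Cancelling the common factor $s$, the two agree exactly when
\begin{align*}
	\frac{\jacprod{q^{5}}{q^{25}}}{\jacprod{q^{10}}{q^{25}}^2}
	+q^{5}\,\frac{\jacprod{q^{5}}{q^{75}}}{\jacprod{q^{10},q^{15},q^{25},q^{35}}{q^{75}}}
	=\frac{\jacprod{q^{20}}{q^{75}}}{\jacprod{q^{10},q^{15},q^{25},q^{35}}{q^{75}}},
\end{align*}
equivalently, after replacing $q^5$ by $q$, the three-term relation $\frac{\jacprod{q}{q^{5}}}{\jacprod{q^{2}}{q^{5}}^2}+q\,\frac{\jacprod{q}{q^{15}}}{\jacprod{q^{2},q^{3},q^{5},q^{7}}{q^{15}}}=\frac{\jacprod{q^{4}}{q^{15}}}{\jacprod{q^{2},q^{3},q^{5},q^{7}}{q^{15}}}$ among theta functions of level $15$. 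This is the sort of identity dispatched by a Weierstrass-type addition theorem for theta functions; alternatively, clearing denominators leaves two holomorphic modular forms of a common weight and level, so verifying enough initial $q$-coefficients suffices. It is plausibly already among the theta identities assembled to establish Theorems~\ref{TheoremPP13}--\ref{TheoremPP37}, in which case it is simply invoked here. Granting it and reassembling the four residue classes gives the displayed formula for $PP_3(\zeta_5,q)$, while the empty $j=4$ class gives the congruence $pp_3(5n+4)\equiv 0\pmod 5$.
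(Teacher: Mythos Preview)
Your approach is exactly the one the paper takes: it derives this theorem simply by subtracting $\frac{1}{(1-\zeta_5)(1-\zeta_5^{-1})}$ times \eqref{EquationCrank5Dissection} from Theorem~\ref{TheoremPP35}, without writing out any of the class-by-class arithmetic. Your computations in the $j=0,1,2,4$ classes are correct, and your observation that the $j=3$ class does \emph{not} reduce to the stated form by cyclotomic arithmetic alone is spot on --- the paper glosses over this.

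The three-term relation you isolate,
\[
\frac{\jacprod{q}{q^{5}}}{\jacprod{q^{2}}{q^{5}}^2}
+q\,\frac{\jacprod{q}{q^{15}}}{\jacprod{q^{2},q^{3},q^{5},q^{7}}{q^{15}}}
=\frac{\jacprod{q^{4}}{q^{15}}}{\jacprod{q^{2},q^{3},q^{5},q^{7}}{q^{15}}},
\]
is genuine and is what is needed. It is not proved elsewhere in the paper, but it falls to the same machinery used throughout Sections~3--8: after clearing denominators (using $\jacprod{q}{q^5}=\jacprod{q,q^4,q^6}{q^{15}}$ and $\jacprod{q^2}{q^5}=\jacprod{q^2,q^3,q^7}{q^{15}}$) each term becomes a generalized eta quotient on $\Gamma_1(15)$, and the valence-formula argument of Section~2 reduces the identity to checking finitely many $q$-coefficients. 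So your proof is complete once you state this and carry out that finite check; you should not leave it as ``plausibly already among the theta identities assembled,'' since the paper never records it explicitly.
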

\begin{remark}
The congruence $pp_3(5n+4)\equiv 0\pmod{5}$ then follows as
$PP_3(\zeta_5,q)$ has no $q^{5n+4}$ terms.
\end{remark}

\begin{theorem}
\begin{align*}
	PP_4(\zeta_5,q)
	&=
	\aqprod{q^{25}}{q^{25}}{\infty}
	\left(
    	-(1+\zeta_5+\zeta_5^4)q^5\frac{\jacprod{q^{10}}{q^{75}}}
		{\jacprod{q^5,q^{20},q^{25},q^{30}}{q^{75}}}
		+
		\frac{q}{\jacprod{q^{10},q^{15},q^{25}}{q^{75}}}
		\right.\\&\left.\quad
		+
		(1+\zeta_5+\zeta_5^4)\frac{q^2}{\jacprod{q^{10}}{q^{25}}} 
    	- 
		\frac{q^2}{\jacprod{q^5,q^{25},q^{30}}{q^{75}}}
    - (\zeta+\zeta^4)q^8\frac{\jacprod{q^5}{q^{75}}}
			{\jacprod{q^{10},q^{15},q^{25},q^{35}}{q^{75}}}
	\right)
.
\end{align*}
\end{theorem}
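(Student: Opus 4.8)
The plan is to combine Theorem~\ref{TheoremPP45} with the $\zeta_5$-dissection of the crank generating function, following the recipe announced just above. Recall we have already shown
\begin{align*}
	PP_4(z,q)
	&= \frac{1}{\aqprod{q}{q}{\infty}}
	\Parans{
		\sum_{n=-\infty}^\infty\frac{q^{3n^2+n}}{(1-zq^{3n})(1-z^{-1}q^{3n})}
		-\sum_{n=-\infty}^\infty \frac{q^{3n^2+7n+2}}{(1-zq^{3n+1})(1-z^{-1}q^{3n+1})}
	}
	-\frac{\aqprod{q}{q}{\infty}}{\aqprod{z,z^{-1}}{q}{\infty}}
.
\end{align*}
Setting $z=\zeta_5$, the first term is evaluated by Theorem~\ref{TheoremPP45}, while for the second I would write $\aqprod{\zeta_5,\zeta_5^{-1}}{q}{\infty}=(1-\zeta_5)(1-\zeta_5^{-1})\aqprod{\zeta_5q,\zeta_5^{-1}q}{q}{\infty}$ and substitute the crank dissection \eqref{EquationCrank5Dissection}.

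First I would pin down the scalar. Writing $u=\zeta_5+\zeta_5^4=\zeta_5+\zeta_5^{-1}$, dividing the minimal polynomial $1+x+x^2+x^3+x^4$ of $\zeta_5$ by $x^2$ gives $u^2+u-1=0$; hence $(1-\zeta_5)(1-\zeta_5^{-1})=2-u$, and since $(2-u)(3+u)=6-u-u^2=5$ we get $\frac{1}{(1-\zeta_5)(1-\zeta_5^{-1})}=\frac{3+u}{5}$. Now both the right-hand side of Theorem~\ref{TheoremPP45} and the right-hand side of \eqref{EquationCrank5Dissection} are $\aqprod{q^{25}}{q^{25}}{\infty}$ times an expression of the shape $\sum_{j=0}^{3}q^j f_j(q^5)$; in particular the $q^4$-component is zero on both sides, which explains why no $q^4$-block occurs in the asserted formula. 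I would cancel the common factor $\aqprod{q^{25}}{q^{25}}{\infty}$ and, for each $j\in\{0,1,2,3\}$, subtract $\frac{3+u}{5}$ times the $q^j$-coefficient of the bracketed series in \eqref{EquationCrank5Dissection} from $A_{45j}(q^5)$.

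It then remains to simplify these four coefficients using $u^2=1-u$. In the $q^0$, $q^1$, and $q^3$ blocks the $\mathbb{Q}(u)=\mathbb{Q}(\sqrt{5})$-rational pieces cancel outright: for instance $\frac{3+u}{5}(u-1)=\frac{u-2}{5}$ kills the $\frac{-2+u}{5}$ term of $A_{451}(q^5)$, and $\frac{3+u}{5}u=\frac{2u+1}{5}$ kills the $\frac{-1-2u}{5}$ term of $A_{453}(q^5)$, leaving in each case only the single theta quotient in $q^{75}$ carried by the second summand of the corresponding $A_{45j}$. In the $q^2$ block the rational part does not cancel but collapses, since $\frac{1+2u}{5}+\frac{3+u}{5}(u+1)=\frac{1+2u}{5}+\frac{3u+4}{5}=1+u$, so that block contributes $(1+u)\,\frac{1}{\jacprod{q^{10}}{q^{25}}}-\frac{1}{\jacprod{q^5,q^{25},q^{30}}{q^{75}}}$. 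Assembling the four blocks reproduces the stated expression for $PP_4(\zeta_5,q)$.

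There is no real obstacle here: given Theorem~\ref{TheoremPP45} and \eqref{EquationCrank5Dissection}, the argument is a finite computation, and the only thing needing care is the arithmetic in $\mathbb{Q}(u)$ through $u^2=1-u$ together with the bookkeeping of Jacobi theta products (and the elementary symmetry $\jacprod{q^a}{q^N}=\jacprod{q^{N-a}}{q^N}$) when matching the two dissections term by term. Whatever genuine difficulty exists sits in the earlier Theorem~\ref{TheoremPP45}; because $A_{450},\dots,A_{453}$ are free of the auxiliary series $S$ and $T$, Chan's Lambert series identity is not invoked at this step.
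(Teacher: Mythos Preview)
Your proposal is correct and follows exactly the route the paper takes: the paper simply states that dividing \eqref{EquationCrank5Dissection} by $(1-\zeta_5)(1-\zeta_5^{-1})$ and combining with Theorem~\ref{TheoremPP45} yields this formula, without writing out the arithmetic. You have carried out that arithmetic explicitly and accurately (the computations with $u=\zeta_5+\zeta_5^4$ and $u^2=1-u$ are all right, and each of the four $q^j$-blocks collapses as you describe).
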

\begin{remark}
The congruence $pp_4(5n+4)\equiv 0\pmod{5}$ then follows
as $PP_4(\zeta_5,q)$ has no $q^{5n+4}$ terms.
\end{remark}

\begin{theorem}
\begin{align*}
	PP_3(\zeta_7,q)
	&= \aqprod{q^{49}}{q^{49}}{\infty}
	\left(
		-(1+\zeta+\zeta^6)\frac{q^{14}}{\jacprod{q^{42},q^{49},q^{56}}{q^{147}}}	
		+q^2\frac{\jacprod{q^{14}}{q^{49}}}{\jacprod{q^7,q^{21}}{q^{49}}}
		\right.\\&\left.\quad
		-(1+\zeta^2+\zeta^5)\frac{q^9}{\jacprod{q^{21},q^{49},q^{70}}{q^{147}}}	
		+(\zeta_7+\zeta_7^6)\frac{q^3}{\jacprod{q^{14}}{q^{49}}}
		+(1+\zeta_7+\zeta_7^2+\zeta_7^5+\zeta_7^6)\frac{q^4}{\jacprod{q^{21}}{q^{49}}}
		\right.\\&\left.\quad
		+(\zeta+\zeta^6)q^5\frac{\jacprod{q^{35}}{q^{147}}}{\jacprod{q^{21},q^{28},q^{49},q^{49}}{q^{147}}}
		+(\zeta+\zeta^6)q^{19}\frac{\jacprod{q^{14}}{q^{147}}}{\jacprod{q^{21},q^{49},q^{49},q^{70}}{q^{147}}}			
		\right.\\&\left.\quad
		+(2+\zeta_7^2+\zeta_7^5)\frac{q^6}{\jacprod{q^{14},q^{49},q^{63}}{q^{147}}}	
	\right)
.
\end{align*}
\end{theorem}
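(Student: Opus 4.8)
The plan is to read off $PP_3(\zeta_7,q)$ as the difference of two $7$-dissections that are already at our disposal, exactly as in the proofs of the preceding evaluation theorems. Recall that the formula for $PP_3(z,q)$ derived above via Bailey's Lemma gives
\begin{align*}
	PP_3(\zeta_7,q)
	&=
	\frac{1}{\aqprod{q}{q}{\infty}}
	\Parans{
		\sum_{n=-\infty}^\infty\frac{q^{3n^2+2n}}{(1-\zeta_7q^{3n})(1-\zeta_7^{-1}q^{3n})}
		-\sum_{n=-\infty}^\infty \frac{q^{3n^2+4n+1}}{(1-\zeta_7q^{3n+1})(1-\zeta_7^{-1}q^{3n+1})}
	}
	-\frac{\aqprod{q}{q}{\infty}}{\aqprod{\zeta_7,\zeta_7^{-1}}{q}{\infty}},
\end{align*}
and the bracketed expression is precisely the left-hand side of Theorem \ref{TheoremPP37}. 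Hence that theorem already supplies its $7$-dissection, namely $\aqprod{q^{49}}{q^{49}}{\infty}\Parans{A_{370}(q^7)+qA_{371}(q^7)+\dots+q^6A_{376}(q^7)}$, with each $A_{37j}(q^7)$ a combination of theta quotients in $q^7$, $q^{49}$, and $q^{147}$. So the only remaining task is to dissect the crank term and subtract.

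For the crank term I would use $\aqprod{\zeta_7,\zeta_7^{-1}}{q}{\infty}=(1-\zeta_7)(1-\zeta_7^{-1})\aqprod{\zeta_7q,\zeta_7^{-1}q}{q}{\infty}$ to write
\begin{align*}
	\frac{\aqprod{q}{q}{\infty}}{\aqprod{\zeta_7,\zeta_7^{-1}}{q}{\infty}}
	=\frac{1}{(1-\zeta_7)(1-\zeta_7^{-1})}\cdot\frac{\aqprod{q}{q}{\infty}}{\aqprod{\zeta_7q,\zeta_7^{-1}q}{q}{\infty}},
\end{align*}
and then apply the crank dissection \eqref{EquationCrank7Dissection}. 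The scalar $\frac{1}{(1-\zeta_7)(1-\zeta_7^{-1})}=\frac{1}{2-(\zeta_7+\zeta_7^{-1})}$ must be expressed inside $\mathbb{Z}[\zeta_7+\zeta_7^{-1}]$: since $\zeta_7+\zeta_7^{-1}$ is a root of $x^3+x^2-2x-1$, the substitution $x=2-y$ shows $y=(1-\zeta_7)(1-\zeta_7^{-1})$ is a root of $y^3-7y^2+14y-7$, so $y^{-1}=(y^2-7y+14)/7$ and therefore $\frac{1}{(1-\zeta_7)(1-\zeta_7^{-1})}=\frac{6+3(\zeta_7+\zeta_7^6)+(\zeta_7^2+\zeta_7^5)}{7}$. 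Multiplying \eqref{EquationCrank7Dissection} by this scalar presents the crank term as $\aqprod{q^{49}}{q^{49}}{\infty}\Parans{B_0(q^7)+qB_1(q^7)+\dots+q^6B_6(q^7)}$, each $B_j$ a theta quotient in $q^7$ or $q^{49}$ weighted by an element of $\mathbb{Z}[\zeta_7]$.

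Now I subtract. Both $7$-dissections carry the common prefactor $\aqprod{q^{49}}{q^{49}}{\infty}$ and everything else is a series in $q^7$, so the coefficient of $q^j$ for $0\le j\le 6$ in $PP_3(\zeta_7,q)/\aqprod{q^{49}}{q^{49}}{\infty}$ is simply $A_{37j}(q^7)-B_j(q^7)$, and assembling these six pieces should give the displayed formula. A couple of illustrative checks: in the class $j=0$ the quotient $\jacprod{q^{21}}{q^{49}}/\jacprod{q^7,q^{14}}{q^{49}}$ occurs in $A_{370}$ with coefficient $\frac{6+3(\zeta_7+\zeta_7^6)+(\zeta_7^2+\zeta_7^5)}{7}$ and in $B_0$ with the identical coefficient (it is the scalar above times the leading term of \eqref{EquationCrank7Dissection}), so it cancels and only the $\jacprod{\cdot}{q^{147}}$ term survives, in agreement with the theorem (where no $\jacprod{q^{21}}{q^{49}}/\jacprod{q^7,q^{14}}{q^{49}}$ appears); and in the class $j=4$ one finds $\frac{2+(\zeta_7+\zeta_7^6)+5(\zeta_7^2+\zeta_7^5)}{7}+\frac{(\zeta_7+\zeta_7^6)(6+3(\zeta_7+\zeta_7^6)+(\zeta_7^2+\zeta_7^5))}{7}=1+\zeta_7+\zeta_7^2+\zeta_7^5+\zeta_7^6$, matching the stated coefficient of $q^4/\jacprod{q^{21}}{q^{49}}$.

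The main obstacle is simply this cyclotomic bookkeeping, repeated over all six residue classes. For each class one expands the products of $\zeta_7+\zeta_7^6$, $\zeta_7^2+\zeta_7^5$, $\zeta_7^3+\zeta_7^4$ that arise in $B_j$ using the relations $(\zeta_7+\zeta_7^6)+(\zeta_7^2+\zeta_7^5)+(\zeta_7^3+\zeta_7^4)=-1$, $(\zeta_7+\zeta_7^6)^2=2+(\zeta_7^2+\zeta_7^5)$, $(\zeta_7+\zeta_7^6)(\zeta_7^2+\zeta_7^5)=(\zeta_7+\zeta_7^6)+(\zeta_7^3+\zeta_7^4)$ and their companions, and then checks the resulting scalar against the coefficient listed in the theorem, keeping the theta denominators of modulus $49$ separate from those of modulus $147$ when matching the $A$-side against the $B$-side. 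None of the individual verifications is conceptually difficult; the only real risk is an arithmetic slip among the many theta-quotient pieces. Once all six classes check out, multiplying the collected dissection back by $\aqprod{q^{49}}{q^{49}}{\infty}$ gives the stated identity for $PP_3(\zeta_7,q)$.
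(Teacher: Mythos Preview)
Your proposal is correct and follows exactly the route the paper uses: the paper derives this theorem (together with its companions) in a single sentence by combining Theorem~\ref{TheoremPP37} with the crank dissection \eqref{EquationCrank7Dissection} divided by $(1-\zeta_7)(1-\zeta_7^{-1})$, which is precisely your plan. One trivial slip: there are seven residue classes $j=0,\dots,6$ (not six), though $B_5=0$ since \eqref{EquationCrank7Dissection} has no $q^5$ term, so only $A_{375}$ contributes there.
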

\begin{remark}
The congruence $pp_3(7n+1)\equiv 0\pmod{7}$ then follows as 
$PP_3(\zeta_7,q)$ has no $q^{7n+1}$ terms.
\end{remark}

The rest of the paper is organized as follows. In section 2 we develop the
necessary identities to eventually express the series in Theorems 
\ref{TheoremPP13} through \ref{TheoremPP37}
in terms of products. In the sections
thereafter we use these formulas to reduce the proofs to verifying an identity
between products, which follows by checking that the equality holds in the first 
few terms of the $q$-expansions. In Section 9 we interpret the coefficients
$M_i(m,n)$ in terms of cranks defined on the partition pairs counted by $pp_i$.
We then end with a few remarks and questions.

\section{Preliminary Identities}

Although we are only concerned with the cases $\ell=3,5$ and $7$, all of the
formulas we state and prove in this section are valid for all odd $\ell>1$.
To begin we define
\begin{align*}
	U^a_\ell(b) 
	&= 
	\sum_{n=-\infty}^\infty \frac{q^{an^2+bn}}{1-q^{\ell(3n+1)}}
	,\\
	V^a_\ell(b) 
	&= 
	\sum_{n\not=0} \frac{q^{an^2+bn}}{1-q^{\ell(3n)}}
	.
\end{align*}

We express our series in terms of $U^a_\ell(b)$ and $V^a_\ell(b)$
by using the fact that
\begin{align*}
	1-x^\ell &= \prod_{j=0}^{\ell-1} (1-\zeta_\ell^jx)
.
\end{align*}

With $\zeta_3$ a primitive third root of unity we have
\begin{align}
\label{PP1Zeta3}
	&\sum_{n=-\infty}^\infty\frac{q^{6n^2+2n}}
		{(1-\zeta_3q^{3n})(1-\zeta_3^{-1}q^{3n})}
	-\sum_{n=-\infty}^\infty \frac{q^{6n^2+8n+2}}
		{(1-\zeta_3q^{3n+1})(1-\zeta_3^{-1}q^{3n+1})}
	\nonumber\\
	&=
		\frac{1}{3}+
		V^6_3(2) - V^6_3(5) - q^2U^6_3(8) + q^3U^6_3(11)
,
\end{align}
the $1/3$ is from the $n=0$ term.
We also have
\begin{align}
\label{PP2Zeta3}
	&\sum_{n=-\infty}^\infty\frac{q^{6n^2+n}}
		{(1-\zeta_3q^{3n})(1-\zeta_3^{-1}q^{3n})}
	-\sum_{n=-\infty}^\infty \frac{q^{6n^2+5n+1}}
		{(1-\zeta_3q^{3n+1})(1-\zeta_3^{-1}q^{3n+1})}
	\nonumber\\
	&=
		\frac{1}{3}+
		V^6_3(1) - V^6_3(4) - qU^6_3(5) + q^2U^6_3(8)
.
\end{align}
With $\zeta_5$ a primitive fifth root of unity we find
\begin{align}
\label{PP2Zeta5}
	&\sum_{n=-\infty}^\infty\frac{q^{6n^2+n}}
		{(1-\zeta_5q^{3n})(1-\zeta_5^{-1}q^{3n})}
	-\sum_{n=-\infty}^\infty \frac{q^{6n^2+5n+1}}
		{(1-\zeta_5q^{3n+1})(1-\zeta_5^{-1}q^{3n+1})}
	\nonumber\\
	&=
		\frac{1}{(1-\zeta_5)(1-\zeta_5^4)}
		+V^6_5(1) + (\zeta_5+\zeta_5^4)V^6_5(4) - (\zeta_5+\zeta_5^4)V^6_5(7) - V^6_5(10) 
		-qU^6_5(5) - (\zeta_5+\zeta_5^4)q^2U^6_5(8) 
		\nonumber\\&\quad
		+ (\zeta_5+\zeta_5^4)q^3U^6_5(11) + q^4U^6_5(14) 	
	,\\
	\label{PP3Zeta5}
	&	
		\sum_{n=-\infty}^\infty\frac{q^{3n^2+2n}}
			{(1-\zeta_5q^{3n})(1-\zeta_5^{-1}q^{3n})}
		-\sum_{n=-\infty}^\infty \frac{q^{3n^2+4n+1}}
			{(1-\zeta_5q^{3n+1})(1-\zeta_5^{-1}q^{3n+1})}
	\nonumber\\
	&=
		\frac{1}{(1-\zeta_5)(1-\zeta_5^4)}+
		V^3_5(2) + (\zeta_5+\zeta_5^4)V^3_5(5) - (\zeta_5+\zeta_5^4)V^3_5(8) - V^3_5(11) 
		-qU^3_5(4) - (\zeta_5+\zeta_5^4)q^2U^3_5(7) 
		\nonumber\\&\quad
		+ (\zeta_5+\zeta_5^4)q^3U^3_5(10) + q^4U^3_5(13) 
	,\\
	\label{PP4Zeta5}
	&
		\sum_{n=-\infty}^\infty\frac{q^{3n^2+n}}
			{(1-\zeta_5q^{3n})(1-\zeta_5^{-1}q^{3n})}
		-\sum_{n=-\infty}^\infty \frac{q^{3n^2+7n+2}}
			{(1-\zeta_5q^{3n+1})(1-\zeta_5^{-1}q^{3n+1})}
	\nonumber\\
	&=
		\frac{1}{(1-\zeta_5)(1-\zeta_5^4)}+
		V^3_5(1) + (\zeta_5+\zeta_5^4)V^3_5(4) - (\zeta_5+\zeta_5^4)V^3_5(7) - V^3_5(10) 
		-q^2U^3_5(7) - (\zeta_5+\zeta_5^4)q^3U^3_5(10) 
		\nonumber\\&\quad
		+ (\zeta_5+\zeta_5^4)q^4U^3_5(13) + q^5U^3_5(16) 
.
\end{align}
Lastly with $\zeta_7$ a primitive seventh root of unity, we find
\begin{align}
	\label{PP3Zeta7}
	&\sum_{n=-\infty}^\infty\frac{q^{3n^2+2n}}
		{(1-\zeta_7q^{3n})(1-\zeta_7^{-1}q^{3n})}
	-\sum_{n=-\infty}^\infty \frac{q^{3n^2+4n+1}}
		{(1-\zeta_7q^{3n+1})(1-\zeta_7^{-1}q^{3n+1})}
	\nonumber\\		
	&=
		\frac{1}{(1-\zeta_7)(1-\zeta_7^6)}
		+V^3_7(2) + (\zeta_7+\zeta_7^6)V^3_7(5) + (1+\zeta_7^2+\zeta_7^5)V^3_7(8)
		- (1+\zeta_7^2+\zeta_7^5)V^3_7(11) 	
		\nonumber\\&\quad
		- (\zeta_7+\zeta_7^6)V^3_7(14) 
		- V^3_7(17)
		-qU^3_7(4) - (\zeta_7+\zeta_7^6)q^2U^3_7(7) 
		- (1+\zeta_7^2+\zeta_7^5)q^3U^3_7(10)
		\nonumber\\&\quad
		+ (1+\zeta_7^2+\zeta_7^5)q^4U^3_7(13) 
	+ (\zeta_7+\zeta_7^6)q^5U^3_7(16) + q^6U^3_7(19)
.	
\end{align}

Rearranging the $S$ and $T$ series with $n\mapsto -n$ and $n\mapsto n+1$ we have
\begin{align}
	\label{SigmaProperty1}
	\SSeries{z}{w}{q} &= -z^{-1}\SSeries{z^{-1}}{w^{-1}q^{-3}}{q}
	,\\
	\label{SigmaProperty2}
	\SSeries{z}{w}{q} &= wq^4\SSeries{zq}{wq^4}{q}
	,\\
	\label{SigmaProperty3}
	\SSeries{z}{w}{q} &= -z^{-1}w^{-1}q\SSeries{z^{-1}q}{w^{-1}q}{q}
	,\\
	\label{SigmaStarProperty}
	S^*(w,q) &= -S^*(w^{-1}q^{-3},q)
	,\\
	\label{TProperty1}
	T(z,w,q) &= -z^{-1}T(z^{-1},w^{-1}q^{-1},q)
	,\\
	\label{TProperty2}
	T(z,w,q) &= wq^2T(zq,wq^2,q)
	,\\
	\label{TProperty3}
	T(z,w,q) &= -z^{-1}w^{-1}qT(z^{-1}q,w^{-1}q,q)
	,\\
	\label{TStarProperty}
	T^*(w,q) &= -T^*(w^{-1}q^{-1},q)	
.
\end{align}

We will also often rearrange infinite products without mention by
\begin{align*}
	\jacprod{z}{q}=-z\jacprod{zq}{q}=-z\jacprod{z^{-1}}{q}
.
\end{align*}

To prove the dissection formulas, we first find general formulas that express 
certain differences of $V^a_\ell(b)$ and $U^a_\ell(b)$ primarily in terms of products. 
For this we express $V^6_\ell(b)$ and $U^6_\ell(b)$ in terms of $S(z,w,q)$
and $V^3_\ell(b)$ and $U^3_\ell(b)$ in terms of $T(z,w,q)$. We then express
certain combinations of $S(z,w,q)$ in terms of products and similarly do so
for certain combinatations of $T(z,w,q)$.
The main tools for this
are two specializations of Theorem 2.1 of \cite{Chan}.

For $PP_1$ and $PP_2$ we use $r=0$ and $s=4$ to get
the following identity.
\begin{lemma}\label{ChanLemma1}
\begin{align*}
	&\frac{\aqprod{q}{q}{\infty}^2}{\jacprod{b_1,b_2,b_3,b_4}{q}}
	\\
	&=
		\frac{1}{\jacprod{b_2/b_1,b_3/b_1,b_4/b_1}{q}}
			\SSeries{b_1}{\frac{b_1^3}{b_2b_3b_4}}{q}
		+
		\frac{1}{\jacprod{b_1/b_2,b_3/b_2,b_4/b_2}{q}}
			\SSeries{b_2}{\frac{b_2^3}{b_1b_3b_4}}{q}
		\\&
		+
		\frac{1}{\jacprod{b_1/b_3,b_2/b_3,b_4/b_3}{q}}
			\SSeries{b_3}{\frac{b_3^3}{b_1b_2b_4}}{q}
		+
		\frac{1}{\jacprod{b_1/b_4,b_2/b_4,b_3/b_4}{q}}
			\SSeries{b_4}{\frac{b_4^3}{b_1b_2b_3}}{q}
.
\end{align*}
\end{lemma}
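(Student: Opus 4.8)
The plan is to obtain this identity as the $r=0$, $s=4$ specialisation of Theorem~2.1 of \cite{Chan}; essentially all of the work is in unwinding the notation. Recall that Chan's theorem expresses a ratio of the shape
\[
\frac{\aqprod{q}{q}{\infty}^{2}\,\prod_{i}\jacprod{a_i}{q}}{\prod_{j=1}^{s}\jacprod{b_j}{q}}
\]
(the numerator theta factors being governed by the parameter $r$) as a sum of $s$ generalised Lambert series, the $j$-th of which is attached to the pole factor $1-b_jq^{n}$, carries the constant $\bigl(\prod_{i\ne j}\jacprod{b_i/b_j}{q}\bigr)^{-1}$, and has summand a power of $q$ quadratic in $n$ times a fixed monomial in the parameters raised to the power $n$. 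The first step is simply to quote this statement in full.

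Next I would specialise. Taking $r=0$ deletes every $a_i$-theta-factor from the numerator, so the left-hand side collapses to $\aqprod{q}{q}{\infty}^{2}/\jacprod{b_1,b_2,b_3,b_4}{q}$, exactly as stated. Taking $s=4$ leaves precisely four Lambert-series terms, indexed by $b_1,b_2,b_3,b_4$, with prefactors $1/\jacprod{b_k/b_j,b_\ell/b_j,b_m/b_j}{q}$ for $\{j,k,\ell,m\}=\{1,2,3,4\}$, which are the prefactors in the Lemma. It then remains to match the four Lambert series with the series $\SSeries{z}{w}{q}$ of this paper: at $r=0$, $s=4$ one checks that the quadratic exponent in Chan's summand is $2n^{2}+2n=2n(n+1)$ and that the monomial base is $b_j^{\,s-r-1}/\prod_{i\ne j}b_i=b_j^{3}/(b_kb_\ell b_m)$, so the $j$-th term is $\SSeries{b_j}{b_j^{3}/(b_kb_\ell b_m)}{q}$, up to the precise normalisation of Chan's theta functions.

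The only genuine obstacle is this final translation of conventions. If Chan writes his Lambert series with a shifted or reflected summation index, or normalises his theta products with an extra $(-1)^nq^{\binom{n}{2}}$-type factor, the resulting discrepancy is an explicit power of $q$ together with a sign; these are absorbed using the reflection $\jacprod{z}{q}=-z\jacprod{z^{-1}}{q}$ along with the rearrangement identities \eqref{SigmaProperty1}--\eqref{SigmaProperty3} for $\SSeries{z}{w}{q}$, after which the two sides agree term by term. Convergence causes no trouble: for $|q|<1$ with the $b_i$ avoiding $q^{\mathbb Z}$ and the zeros of the theta denominators, each $S$-series converges because its quadratic exponent dominates in both directions, and the identity may equally be read as one of formal Laurent series in $q$.
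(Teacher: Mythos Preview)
Your proposal is correct and is exactly the paper's approach: the lemma is simply the $r=0$, $s=4$ specialisation of Theorem~2.1 of \cite{Chan}, with the resulting Lambert series identified as $\SSeries{b_j}{b_j^3/(b_kb_\ell b_m)}{q}$. The paper states this in one line and does not spell out the normalisation check, but your more detailed account of how the $2n(n+1)$ exponent and the monomial $b_j^3/\prod_{i\ne j}b_i$ arise is accurate.
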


Next We define
\begin{align*}
	g(z,q) &=
	-z^2\SSeries{z}{z^2}{q}
	+\frac{\jacprod{z}{q}}{\jacprod{z^3}{q}}\SSeries{z}{\frac{z^6}{q^3}}{q}
	+z^5\frac{\jacprod{z}{q}}{\jacprod{z^3}{q}}\SSeries{z^2}{z^6}{q}
	+\sum_{n\not=0} \frac{q^{2n(n+1)}z^{-2n}}{1-q^n}
.
\end{align*}
The definition of $g(z,q)$ is motivated as follows. We would like to set one of
the $b_i$ in Lemma \ref{ChanLemma1} equal to $1$, as each $V_\ell^6(b)$ will
contribute a $S^*(w,q)$. For this we let 
$b_1=w$, $b_2=zw$, $b_3=z/w$, and $b_4=1/w$, multiply both sides by the product
$\jacprod{w^2z^{-1},w^2,z^{-1}}{q}$, subtract the $n=0$ term from 
$S(zw^{-1},z^2w^4,q)$, and let $w\rightarrow z$.
In particular, we also have
\begin{align*}
	g(z,q) &=
	\lim_{w\rightarrow z}
	\Parans{
		\frac{\jacprod{w^2z^{-1},z^{-1},w^2}{q}\aqprod{q}{q}{\infty}^2}
		{\jacprod{w,w^{-1},zw,zw^{-1}}{q}}
		-\frac{1}{1-z/w}
	}
.
\end{align*}

For $PP_3$ and $PP_4$ we use $r=0$ and $s=2$ and simplify to get
the following identity.
\begin{lemma}\label{ChanLemma2}
\begin{align*}
	\frac{\aqprod{q}{q}{\infty}^2\jacprod{b_2b_1^{-1}}{q}}
		{\jacprod{b_1,b_2}{q}}
	&=
	T(b_1,b_1b_2^{-1},q) - b_2b_1^{-1}	T(b_2,b_2b_1^{-1},q)
.
\end{align*}
\end{lemma}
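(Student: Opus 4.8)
The plan is to obtain this from the case $r=0$, $s=2$ of Theorem 2.1 of \cite{Chan}, following the same template that motivated $g(z,q)$ and Lemma \ref{ChanLemma1}, but now with two product parameters instead of four. First I would recall that the relevant specialization of Chan's identity, with base parameter $q$ and the two ``free'' parameters $b_1,b_2$, reads as a decomposition of $\aqprod{q}{q}{\infty}^2/\jacprod{b_1,b_2}{q}$ into two pieces, one ``attached'' to $b_1$ and one to $b_2$, each of the shape $\frac{1}{\jacprod{b_j/b_i}{q}}$ times a bilateral series whose $n$-th term has a denominator $1-b_i q^n$. The exponent on $q$ in the numerator of those bilateral series is the quadratic $n(n+1)$ (this is what distinguishes the $s=2$ case from the $s=4$ case, where it was $2n(n+1)$), so each piece is exactly a $T(z,w,q)$ in the paper's notation after reading off $z$ and $w$ from the parameters.

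Concretely, I would take $b_1$ and $b_2$ as in the statement and match terms: the $b_1$-piece becomes $\frac{1}{\jacprod{b_2/b_1}{q}} T\!\left(b_1, w_1, q\right)$ and the $b_2$-piece becomes $\frac{1}{\jacprod{b_1/b_2}{q}} T\!\left(b_2, w_2, q\right)$ for appropriate monomials $w_1,w_2$ in $b_1,b_2$; the requirement that the quadratic-in-$n$ exponent in Chan's series match $n(n+1)$ pins down $w_1 = b_1 b_2^{-1}$ and $w_2 = b_2 b_1^{-1}$. Then I would use the elementary product rearrangement $\jacprod{b_1/b_2}{q} = -b_1 b_2^{-1}\jacprod{b_2/b_1}{q}$ (the unnumbered identity $\jacprod{z}{q} = -z\jacprod{z^{-1}}{q}$ recorded just before Lemma \ref{ChanLemma1}) to put both pieces over the common factor $\jacprod{b_2 b_1^{-1}}{q}$; this is what produces the factor $\jacprod{b_2b_1^{-1}}{q}$ on the left-hand side and the coefficient $-b_2 b_1^{-1}$ in front of $T(b_2,b_2b_1^{-1},q)$ on the right. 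Multiplying through by $\jacprod{b_2b_1^{-1}}{q}$ and clearing the denominator $\jacprod{b_2/b_1}{q}$ against it then gives exactly the asserted identity.

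The only real care needed — and the step I expect to be the main obstacle — is the bookkeeping in specializing Chan's general theorem: making sure the indexing set, the placement of $q$ versus $q^{\ell}$ or other shifts, and the normalization of the bilateral series are transcribed correctly so that the numerator exponent genuinely simplifies to $n(n+1)$ and the two ``extra'' product factors in Chan's formula collapse down to the single $\jacprod{b_2b_1^{-1}}{q}$ shown here. Once the parameters are matched, everything else is the same one-line product manipulation used throughout the preliminaries, so I would present that matching explicitly and then let the rearrangement $\jacprod{z}{q}=-z\jacprod{z^{-1}}{q}$ finish the proof; convergence is automatic since each $T(z,w,q)$ is, by the definition given in the excerpt, a convergent bilateral series for $|q|<1$ away from the obvious poles in $z$.
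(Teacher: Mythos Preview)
Your proposal is correct and matches the paper's approach exactly: the paper simply states that this is the $r=0$, $s=2$ specialization of Theorem~2.1 of \cite{Chan} after simplification, and you have spelled out that simplification (the use of $\jacprod{b_1/b_2}{q}=-b_1b_2^{-1}\jacprod{b_2/b_1}{q}$ to combine the two pieces) in more detail than the paper does.
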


We define
\begin{align*}
	h(z,q) 
	&=
	T^*(z^{-1},q) - zT(z,z,q)
.
\end{align*}
We note that this function arises from the the right hand side of 
the equation in Lemma \ref{ChanLemma2} by
removing the $n=0$ term of $T(b_1,b_1b_2^{-1},q)$, setting $b_2=z$, and letting 
$b_1\rightarrow 1$.

\begin{lemma}\label{LemmaFor3b2}
For any integer $b$,
\begin{align*}
	&V^3_\ell(3b+2)-q^{b+1}U^3_\ell(3b+4)
	\\
	&=
	h(q^{3\ell^2-3b\ell-2\ell}, q^{3\ell^2})
	+\sum_{k=1}^{\ell-1}q^{3k^2+2k+3bk}
		\frac{\aqprod{q^{3\ell^2}}{q^{3\ell^2}}{\infty}^2 
			\jacprod{q^{3\ell^2-3b\ell-6k\ell-2\ell}}{q^{3\ell^2}} }
		{\jacprod{q^{3\ell k},q^{3\ell^2-3b\ell-3k\ell-2\ell}}{q^{3\ell^2}} }
.
\end{align*}
\end{lemma}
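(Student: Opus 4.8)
The plan is to prove Lemma \ref{LemmaFor3b2} by specializing Lemma \ref{ChanLemma2} and then collecting terms according to residue classes of the summation index. First I would start from the definitions of $V^3_\ell(b)$ and $U^3_\ell(b)$ and use the factorization $1-x^\ell = \prod_{j=0}^{\ell-1}(1-\zeta_\ell^j x)$ in reverse: that is, I want to recognize the left side $V^3_\ell(3b+2)-q^{b+1}U^3_\ell(3b+4)$ as arising from a single $T$-type series in $q^{3\ell^2}$ by splitting the summation index $n$ into residue classes modulo $\ell$. Concretely, writing $n = \ell m + k$ with $0 \le k \le \ell-1$ in an appropriate $T(z,w,q^{3\ell^2})$, the denominators $1-q^{\ell(3n+1)}$ and $1-q^{3\ell n}$ get distributed over these classes, and the $k=0$ piece is exactly the combination that Lemma \ref{ChanLemma2} evaluates, while the $k \ne 0$ pieces are geometric-type series that telescope into the product terms $\jacprod{\cdot}{q^{3\ell^2}}$ appearing in the statement.

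Next I would carry out the bookkeeping carefully. The idea is to choose $b_1, b_2$ in Lemma \ref{ChanLemma2} (with base $q^{3\ell^2}$) so that $T(b_1, b_1 b_2^{-1}, q^{3\ell^2})$, after the substitution $n \mapsto \ell n$ in its defining sum, reproduces $V^3_\ell(3b+2)$ together with its $n=0$ term, and so that the $b_2 b_1^{-1} T(b_2, b_2 b_1^{-1}, q^{3\ell^2})$ term, after $n \mapsto \ell n + \text{something}$, reproduces $q^{b+1}U^3_\ell(3b+4)$. Matching the exponent $q^{3n^2+(3b+2)n}$ against $q^{3\ell^2 \cdot (\ell m + k)^2 + \cdots}$ forces the choices of $b_1$ and $b_2$ as powers of $q$; I expect $b_2 = q^{3\ell^2 - 3b\ell - 2\ell}$-type monomials, matching the $h$-argument $q^{3\ell^2-3b\ell-2\ell}$ in the statement and the shift used to define $h$ from Lemma \ref{ChanLemma2} (remove the $n=0$ term of $T(b_1,\cdot)$, set $b_2 = z$, let $b_1 \to 1$). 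The remaining $k \ne 0$ contributions come from the other residue classes of the two $T$-sums; each is a sum over $m$ of a ratio whose denominator is a single factor $1 - q^{3\ell^2 m + (\text{linear in }k)}$, and summing the telescoping difference of the two such ratios over $m \in \mathbb{Z}$ collapses via the Jacobi triple product into the quotient $\frac{\aqprod{q^{3\ell^2}}{q^{3\ell^2}}{\infty}^2 \jacprod{q^{3\ell^2-3b\ell-6k\ell-2\ell}}{q^{3\ell^2}}}{\jacprod{q^{3\ell k}, q^{3\ell^2-3b\ell-3k\ell-2\ell}}{q^{3\ell^2}}}$, with prefactor $q^{3k^2+2k+3bk}$ coming from the $m=0$ term of that class. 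In effect this is a second application of Lemma \ref{ChanLemma2} (or of the $\aqprod{q}{q}{\infty}^2 = $ theta-quotient identity behind it) to each pair of nonzero residue classes.

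The main obstacle I expect is the exponent arithmetic: one must verify that for each $k$ the quadratic-in-$m$ exponents coming from the two original series ($3n^2+(3b+2)n$ with $n \equiv k$, and $3n^2+(3b+4)n$ from $U^3_\ell$ with the $q^{b+1}$ prefactor and the $3n+1$ index shift) are genuinely the \emph{same} quadratic form in $m$ after completing the square, so that their difference really is a single theta-type series rather than a genuine two-variable object, and that the linear offsets produce exactly the monomial $q^{3\ell^2-3b\ell-6k\ell-2\ell}$ inside the numerator Jacobi product and $q^{3\ell^2-3b\ell-3k\ell-2\ell}$ in the denominator. The index-shift properties \eqref{SigmaProperty1}--\eqref{TStarProperty} for $T$ and $T^*$ will be used to bring each piece into the normalized form matching $h(z,q)$ and the summands. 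Once the matching of exponents is pinned down, the collapse of each $k$-class is a routine application of the Jacobi triple product and the product rearrangement $\jacprod{z}{q} = -z\jacprod{zq}{q} = -z\jacprod{z^{-1}}{q}$, so the real content is purely this one verification that the quadratic forms align; I would do it by writing $n = \ell m + k$ explicitly and comparing the coefficients of $m^2$, $m$, and the constant in both exponents.
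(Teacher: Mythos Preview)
Your proposal is correct and is essentially the paper's own argument: split both $V^3_\ell(3b+2)$ and $U^3_\ell(3b+4)$ into residue classes modulo $\ell$ (the paper uses $n\mapsto \ell n+k$ in $V$ and $n\mapsto \ell n -b-k+\ell-1$ in $U$), so that the $k=0$ pair is by definition $h(q^{3\ell^2-3b\ell-2\ell},q^{3\ell^2})$, and for each $k\ge 1$ the two resulting $T$-series combine via Lemma~\ref{ChanLemma2} into the stated product quotient with prefactor $q^{3k^2+2k+3bk}$. The only point to sharpen is that the $k=0$ piece is not itself an application of Lemma~\ref{ChanLemma2} but rather matches the \emph{definition} of $h$ (a $T^*$ term minus $zT(z,z,q)$); Lemma~\ref{ChanLemma2} is applied only for $k=1,\dots,\ell-1$, and there the exponent check you describe is exactly what the paper's substitution $n\mapsto \ell n -b-k+\ell-1$ in $U$ is engineered to make trivial.
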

\begin{proof}
We have
\begin{align*}
	&V^3_\ell(3b+2)-q^{b+1}U^3_\ell(3b+4)
	\\
	&=
	\sum_{n\not=0} \frac{q^{3n^2+(3b+2)n}}{1-q^{3\ell n}}
	-q^{b+1}\sum_{n=-\infty}^\infty \frac{q^{3n^2+(3b+4)n}}{1-q^{3\ell n+\ell}}
	\\
	&=
	\sum_{k=0}^{\ell-1}q^{3k^2+2k+3bk}\sum_{\substack{n=-\infty\\(n,k)\not=(0,0)}}^\infty 
		\frac{q^{3\ell^2n(n+1) - 3\ell^2 n + 3b\ell n + 6k\ell n + 2\ell n}}
		{1-q^{3\ell^2n + 3\ell k}}
		\\&\quad
		-\sum_{k=0}^{\ell-1}
			q^{3k^2 + 2k + 3bk - 6k\ell  - 3b\ell  - 2\ell    + 3\ell^2 }
			\sum_{n=-\infty}^\infty\frac{ q^{3\ell^2n^2 + 6\ell^2n - 3b\ell n - 6k\ell n - 2\ell n} }
				{1-q^{3\ell^2n -3b\ell  + 3\ell^2 - 2\ell  - 3k\ell }}			
	\\
	&=
	T^*(3b\ell+2\ell-3\ell^2,3\ell^2)
	-
	q^{- 3\ell b - 2\ell   + 3\ell^2}
		T(3\ell^2-3b\ell-2\ell, 3\ell^2 - 3b\ell - 2\ell, 3\ell^2)
		\\&\quad
		+
		\sum_{k=1}^{\ell-1}q^{3k^2+2k+3bk}
		T(3\ell k, 3b\ell + 6k\ell + 2\ell - 3\ell^2, 3\ell^2)
		\\&\quad
		-
		\sum_{k=1}^{\ell-1}
			q^{3k^2 + 2k + 3bk - 6k\ell  - 3b\ell  - 2\ell   + 3\ell^2 }
			T(3\ell^2-3b\ell-3k\ell-2\ell, 3\ell^2-3b\ell-6k\ell-2\ell, 3\ell^2)
	\\
	&=
		h(q^{3\ell^2-3b\ell-2\ell}, q^{3\ell^2})
		+\sum_{k=1}^{\ell-1}q^{3k^2+2k+3bk}
			\frac{\aqprod{q^{3\ell^2}}{q^{3\ell^2}}{\infty}^2 
				\jacprod{q^{3\ell^2 - 3b\ell - 6k\ell - 2\ell}}{q^{3\ell^2}} }
			{\jacprod{q^{3\ell k},q^{3\ell^2 - 3b\ell - 3k\ell -2\ell}}{q^{3\ell^2}} }
.	
\end{align*}
In $V^3_\ell(3b+2)$ we have replaced $n$ by $\ell n + k$ and in
$U^3_\ell(3b+4)$ we have replaced $n$ by $\ell n - b - k + \ell - 1 $.
The last equality follows by applying Lemma \ref{ChanLemma2} to the 
$k^{th}$ terms of each sum.
\end{proof}

\begin{lemma}\label{LemmaFor6b1}
For any integer $b$,
\begin{align*}
	& V_\ell^6(6b+1)-q^{2b+1}U_\ell^6(6b+5)
	\\
	&=
	-g(q^{\frac{3\ell^2+\ell}{2}+3b\ell},q^{3\ell^2})
	-\sum_{k=2}^{\ell-1} q^{6bk+6k^2+k-3k\ell}
		\frac{\aqprod{q^{3\ell^2}}{q^{3\ell^2}}{\infty}^2
			\jacprod{q^{3k\ell-3\ell}, q^{\frac{3\ell^2+7\ell}{2}+3b\ell+3k\ell},
			q^{\frac{3\ell^2+\ell}{2}+6k\ell+3b\ell}}{q^{3\ell^2}}}	
		{\jacprod{q^{-3\ell},q^{\frac{3\ell^2+7\ell}{2}+3b\ell},q^{-3k\ell},
			q^{\frac{3\ell^2+\ell}{2} + 3k\ell + 3b\ell}}{q^{3\ell^2}}}		
	\\&\quad
	+q^{\frac{3\ell^2+\ell}{2}+3b\ell }
	\frac{\aqprod{q^{3\ell^2}}{q^{3\ell^2}}{\infty}^2
		\jacprod{q^{-6b\ell-4\ell}, q^{\frac{-3\ell^2+5\ell}{2}-3b\ell}}{q^{3\ell^2}}}
	{\jacprod{q^{3\ell}, q^{\frac{3\ell^2-7\ell}{2}-3b\ell}, q^{-6b\ell-\ell}}{q^{3\ell^2}}}
	\\&\quad
	+
		(-1)^{b+1}q^{-\frac{3b^2+b}{2}-3\ell}\frac{\aqprod{q}{q}{\infty}}
		{\aqprod{q^{3\ell^2}}{q^{3\ell^2}}{\infty}
		\jacprod{q^{\frac{3\ell^2+13\ell}{2}+3b\ell }}{q^{3\ell^2}}}
		\\&\qquad\qquad
		\times\left(
		\SSeries{-3\ell}{-3\ell^2 - 6b\ell - 13\ell}{3\ell^2}
		-q^{3\ell^2 + 6b\ell + 13\ell}\SSeries{ \frac{3\ell^2+7\ell}{2} + 3b\ell}{3\ell^2 + 6b\ell + 13\ell}{3\ell^2}
	\right)
.
\end{align*}
\end{lemma}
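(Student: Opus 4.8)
The plan is to mirror the proof of Lemma~\ref{LemmaFor3b2}, but with the four‑term identity of Lemma~\ref{ChanLemma1} in place of the two‑term identity of Lemma~\ref{ChanLemma2} and with $S$‑series in place of $T$‑series; the skeleton is identical, only the bookkeeping is heavier. First I would write
\begin{align*}
	V_\ell^6(6b+1)-q^{2b+1}U_\ell^6(6b+5)
	&=
	\sum_{n\not=0}\frac{q^{6n^2+(6b+1)n}}{1-q^{3\ell n}}
	-q^{2b+1}\sum_{n=-\infty}^\infty\frac{q^{6n^2+(6b+5)n}}{1-q^{\ell(3n+1)}}
\end{align*}
and dissect each sum by residue classes, replacing $n$ with $\ell n+k$ in the $V$‑sum and with a shifted index of the form $\ell n-b-k+(\mathrm{const})$ in the $U$‑sum, the shift being chosen exactly as in Lemma~\ref{LemmaFor3b2} so that the ``diagonal'' classes line up. After matching the quadratic coefficient (which automatically becomes $6\ell^2$) and absorbing the excess linear term into the $w$‑parameter, each inner sum becomes a series $\SSeries{q^{a}}{q^{c}}{q^{3\ell^2}}$ with $a,c$ depending on $\ell,b$ and the class; the $k=0$ class of the $V$‑sum, with its $n=0$ term deleted, is an $S^\ast$‑type series and supplies the Appell–Lerch piece $\sum_{n\not=0}q^{2n(n+1)}z^{-2n}/(1-q^n)$ of $g$.

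Next I would combine the classes. For every class away from the pole I would take the relevant powers of $q^{3\ell^2}$ as the parameters $b_1,b_2,b_3,b_4$ in Lemma~\ref{ChanLemma1}; after using the rearrangements \eqref{SigmaProperty1}–\eqref{SigmaProperty3} (and the product rearrangement $\jacprod{z}{q}=-z\jacprod{zq}{q}=-z\jacprod{z^{-1}}{q}$) to line up the four $S$‑series, they collapse to the displayed Jacobi‑product expressions, yielding the sum $\sum_{k=2}^{\ell-1}$ together with the isolated $k=1$ term — the latter must be peeled off separately because its ratios $b_i/b_j$ degenerate differently from the generic ones. For the class that meets the pole — this is the specialization $b_1=w$, $b_2=zw$, $b_3=z/w$, $b_4=1/w$ of Lemma~\ref{ChanLemma1} with $z=q^{\frac{3\ell^2+\ell}{2}+3b\ell}$, multiplied by $\jacprod{w^2z^{-1},w^2,z^{-1}}{q^{3\ell^2}}$ and then $w\to z$ — the identity produces no product: three of the four $S$‑terms survive the limit with product coefficients and the fourth develops the simple pole whose $n=0$ part is exactly $1/(1-z/w)$, so by the definition of $g$ this class contributes precisely $-g\bigl(q^{\frac{3\ell^2+\ell}{2}+3b\ell},q^{3\ell^2}\bigr)$.

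The surviving term $\frac{\aqprod{q}{q}{\infty}}{\aqprod{q^{3\ell^2}}{q^{3\ell^2}}{\infty}}\cdots$ comes from the asymmetry of the two dissections: since the denominator $1-q^{\ell(3n+1)}$ of $U_\ell^6$ is not a multiple of $q^{3\ell n}$, one residue class of the $U$‑sum has no partner among the $V$‑classes; applying Lemma~\ref{ChanLemma1} to that lone class (after extracting the $q$‑power prefactor $(-1)^{b+1}q^{-\frac{3b^2+b}{2}-3\ell}$ and using Euler's pentagonal number theorem together with the Jacobi triple product to absorb the stray $\aqprod{q}{q}{\infty}/\aqprod{q^{3\ell^2}}{q^{3\ell^2}}{\infty}$) leaves exactly the difference $\SSeries{-3\ell}{-3\ell^2-6b\ell-13\ell}{3\ell^2}-q^{3\ell^2+6b\ell+13\ell}\SSeries{\frac{3\ell^2+7\ell}{2}+3b\ell}{3\ell^2+6b\ell+13\ell}{3\ell^2}$ with the stated coefficient. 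Assembling the diagonal contribution, the product terms, the $k=1$ term, and this residual term, and simplifying the $q$‑powers and Jacobi products, gives the claimed formula.

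The hard part will be purely the bookkeeping: choosing the correct shift for the $U$‑sum and, in each invocation of Lemma~\ref{ChanLemma1}, the correct quadruple $(b_1,b_2,b_3,b_4)$ so that every $S$‑series generated along the way cancels except for those assembling $g$ and the lone $S$‑series term, and faithfully tracking the monomial prefactors and signs introduced by \eqref{SigmaProperty1}–\eqref{SigmaProperty3} and by the product rearrangements. Once the indices are set up correctly, the rest is routine algebra of theta products.
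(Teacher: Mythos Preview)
Your overall strategy is right, but two of your attributions are wrong in ways that matter for actually carrying out the proof.

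First, the factor $\aqprod{q}{q}{\infty}/\aqprod{q^{3\ell^2}}{q^{3\ell^2}}{\infty}$ does \emph{not} come from an unmatched residue class.  Both the $V$-sum and the $U$-sum split into exactly $\ell$ classes, and each $V$-class pairs with a $U$-class.  What happens instead is this: when you apply Lemma~\ref{ChanLemma1} to the $k$-th pair (for $k\ge 2$), two of the four $b_i$ are held fixed at $q^{-3\ell}$ and $q^{\frac{3\ell^2+7\ell}{2}+3b\ell}$, so each application produces a pure product term \emph{plus} a $k$-dependent multiple of the \emph{same} $S$-series combination
\[
\SSeries{-3\ell}{-3\ell^2-6b\ell-13\ell}{3\ell^2}-q^{3\ell^2+6b\ell+13\ell}\SSeries{\tfrac{3\ell^2+7\ell}{2}+3b\ell}{3\ell^2+6b\ell+13\ell}{3\ell^2}.
\]
The $k=0,1$ pieces also contribute to this combination (via the $g$-step, see below).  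The total coefficient is then a sum of the form $\sum_{k=0}^{\ell-1} q^{6bk+6k^2+k-3\ell}\jacprod{q^{\frac{3\ell^2+\ell}{2}+6k\ell+3b\ell}}{q^{3\ell^2}}/\jacprod{q^{\frac{3\ell^2+13\ell}{2}+3b\ell}}{q^{3\ell^2}}$, and it is \emph{this sum} that Euler's pentagonal number theorem (after the substitution $n\mapsto \ell n+2k+b$) evaluates to $(-1)^{b+1}q^{-\frac{3b^2+b}{2}-3\ell}\aqprod{q}{q}{\infty}/\aqprod{q^{3\ell^2}}{q^{3\ell^2}}{\infty}\jacprod{\cdots}{q^{3\ell^2}}$.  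If you go looking for a lone unpaired class you will not find one.

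Second, your plan to recover $g$ directly from its limit definition hits the very obstruction the paper's definition of $g$ warns about: two of the three $S$-terms in $g(z,q)$ carry the coefficient $\jacprod{z}{q}/\jacprod{z^3}{q}$, which is singular when $z^3$ is an integral power of $q$.  With $z=q^{\frac{3\ell^2+\ell}{2}+3b\ell}$ and $q\mapsto q^{3\ell^2}$ this occurs precisely when $6b\equiv -1\pmod{\ell}$, which does happen for odd $\ell>3$.  So you cannot simply match the $k=0$ terms to the pieces of $g$; you need an auxiliary application of Lemma~\ref{ChanLemma1} with a different quadruple (in the paper, $b_1=q^{3\ell}$, $b_2=z^{-1}q^{3\ell^2-3\ell}$, $b_3=zq^{3\ell^2}$, $b_4=z^{-2}q^{3\ell^2}$) that yields an identity for $-S^*(z^{-2},q^{3\ell^2})+z^2\SSeries{z}{z^2}{q^{3\ell^2}}$ valid at the specialization, and \emph{that} step is what produces the third displayed product term in the lemma together with further contributions to the common $S$-series coefficient above.
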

\begin{proof}
The proof is similar to Lemma \ref{LemmaFor3b2}, but a little more involved.
In $V_\ell^6(6b+1)$ we use $n\mapsto \ell n+k$ and rearrange the terms
by (\ref{SigmaProperty1}) and (\ref{SigmaStarProperty});
in $U_\ell^6(6b+5)$ we use $n\mapsto\ell  n-k -b+\frac{\ell-1}{2}$
and rearrange the terms by (\ref{SigmaProperty3}).
We isolate the $k=0$ and $k=1$ summands and apply Lemma \ref{ChanLemma1}
so that we only have products and a term involving the $k=1$ summand.
\begin{align*}
	&V_\ell^6(6b+1) -q^{2b+1}U_\ell^6(6b+5)
	\\
	&=
	\sum_{n\not=0} \frac{q^{6n^2+(6b+1)n}}{1-q^{3\ell n}}
	-q^{2b+1}\sum_{n=-\infty}^{\infty} \frac{q^{6n^2+(6b+5)n}}{1-q^{3\ell n+\ell}}
	\\
	&=
		\sum_{k=0}^{\ell-1}q^{6k^2+6bk+k}
		\sum_{\substack{n=-\infty\\(n,k)\not=(0,0)}}^\infty
		\frac{q^{6\ell^2n^2 + 6b\ell n + 12k\ell n +\ell n}}
			{1-q^{3\ell^2n + 3k\ell }}
		\\&\quad
		-
		\sum_{k=0}^{\ell-1}q^{\frac{3\ell^2-\ell}{2} + k + 6bk +6k^2 -6k\ell -3b\ell}
		\sum_{n=-\infty}^\infty
		\frac{q^{6\ell^2n^2 + 6\ell^2n -6b\ell n - 12k\ell n - \ell n }}
			{1-q^{3\ell^2n - 3k\ell  - 3b\ell + \frac{3\ell^2-\ell}{2}}}
	\\
	&=
		S^*(-6\ell^2+6b\ell  + \ell, 3\ell^2)
		+
		\sum_{k=1}^{\ell-1} q^{6bk+6k^2+k}\SSeries{3k\ell}{-6\ell^2 + 6b\ell  +12k\ell  + \ell}{3\ell^2}
		\\&\quad
		-\sum_{k=0}^{\ell-1} q^{\frac{3\ell^2-\ell}{2} + k + 6bk +6k^2 -6k\ell -3b\ell}
		\SSeries{-3k\ell-3b\ell+ \frac{3\ell^2-\ell}{2} }{-6b\ell-12k\ell-\ell}{3\ell^2}
	\\
	&=
		-S^*(-3\ell^2 - 6\ell b - \ell,3\ell^2)
		-\sum_{k=1}^{\ell-1} q^{6bk+6k^2+k-3k\ell}\SSeries{-3k\ell}{-3\ell^2 - 6b\ell  -12k\ell  - \ell}{3\ell^2}
		\\&\quad
		+\sum_{k=0}^{\ell-1}  q^{3\ell^2 + 6b\ell + 9k\ell + 6bk + 6k^2 + \ell +k}
		\SSeries{\frac{3\ell^2+\ell}{2}+3k\ell+3b\ell}{3\ell^2+6b\ell+12k\ell+\ell}{3\ell^2}
	\\
	&=
		-S^*(-3\ell^2-6b\ell-\ell,3\ell^2)	
		+q^{3\ell^2+6b\ell+\ell}\SSeries{ \frac{3\ell^2+\ell}{2}+3b\ell }{3\ell^2+6b\ell+\ell}{3\ell^2}	
		\\&\quad
		-q^{6b+7-3\ell}
		\left(
			\SSeries{-3\ell}{-3\ell^2-6b\ell-13\ell}{3\ell^2}
			-q^{3\ell^2+6b\ell+13\ell}\SSeries{ \frac{3\ell^2+7\ell}{2}+3b\ell}{3\ell^2+6b\ell+13\ell}{3\ell^2}
		\right)
		\\&\quad
		-\sum_{k=2}^{\ell-1} q^{6bk+6k^2+k-3k\ell}\SSeries{-3k\ell}{-3\ell^2 - 6b\ell  -12k\ell  - \ell}{3\ell^2}
		\\&\quad
		+\sum_{k=2}^{\ell-1} q^{3\ell^2 + 6b\ell + 9k\ell + 6bk + 6k^2 + \ell +k}
		\SSeries{\frac{3\ell^2+\ell}{2}+3k\ell+3b\ell}{3\ell^2+6b\ell+12k\ell+\ell}{3\ell^2}
	\\
	&=
		-S^*(-3\ell^2-6b\ell-\ell,3\ell^2)	
		+q^{3\ell^2+6b\ell+\ell}\SSeries{ \frac{3\ell^2+\ell}{2}+3b\ell }{3\ell^2+6b\ell+\ell}{3\ell^2}	
		\\&\quad
		-q^{6b+7-3\ell}
		\left(
			\SSeries{-3\ell}{-3\ell^2-6b\ell-13\ell}{3\ell^2}
			-q^{3\ell^2+6b\ell+13\ell}\SSeries{ \frac{3\ell^2+7\ell}{2}+3b\ell}{3\ell^2+6b\ell+13\ell}{3\ell^2}
		\right)
		\\&\quad
		+\sum_{k=2}^{\ell-1} q^{6bk+6k^2+k-3k\ell}
		\bigg(
			-\SSeries{-3k\ell}{-3\ell^2 - 6b\ell  -12k\ell  - \ell}{3\ell^2}
			\\&\left.\qquad\qquad
			+
			q^{3\ell^2 + 6b\ell + 12k\ell + \ell }
			\SSeries{\frac{3\ell^2+\ell}{2}+3k\ell+3b\ell}{3\ell^2+6b\ell+12k\ell+\ell}{3\ell^2}
		\right)
.
\end{align*}

Applying Lemma \ref{ChanLemma1} with
$q\mapsto q^{3\ell^2}$,
$b_1 = q^{-3\ell}$,
$b_2 = q^{\frac{3\ell^2+7\ell}{2}+3b\ell}$,
$b_3 = q^{-3k\ell}$,
$b_4 = q^{\frac{3\ell^2+\ell}{2} + 3k\ell + 3b\ell}$
and simplifying gives
\begin{align*}
	&
		-\SSeries{-3k\ell}{-3\ell^2 - 6\ell b -12\ell k - \ell}{3\ell^2}
		+q^{3\ell^2+6b\ell+12k\ell+\ell}
		\SSeries{\frac{3\ell^2+\ell}{2}+3k\ell+3b\ell}{3\ell^2+6b\ell+12k\ell+\ell}{3\ell^2}
	\\
	&=
		-\frac{\aqprod{q^{3\ell^2}}{q^{3\ell^2}}{\infty}^2
			\jacprod{
			q^{3k\ell-3\ell},
			q^{\frac{3\ell^2+7\ell}{2}+3b\ell+3k\ell},
			q^{\frac{3\ell^2+\ell}{2}+6k\ell+3b\ell}
			}{q^{3\ell^2}}}	
		{\jacprod{
			q^{-3\ell},
			q^{\frac{3\ell^2+7\ell}{2}+3b\ell},
			q^{-3k\ell},
			q^{\frac{3\ell^2+\ell}{2} + 3k\ell + 3b\ell}
			}{q^{3\ell^2}}}
		\\&\quad		
		-q^{3k\ell-3\ell}\frac{\jacprod{
			q^{\frac{3\ell^2+\ell}{2}+6k\ell+3b\ell}
			}{q^{3\ell^2}}}	
		{\jacprod{
			q^{\frac{3\ell^2+13\ell}{2}+3b\ell}
		}{q^{3\ell^2}}}		
		\SSeries{-3\ell}{-3\ell^2-6b\ell-13\ell}{3\ell^2}
		\\&\quad
		+q^{3k\ell+3\ell^2+6b\ell+10\ell}
		\frac{\jacprod{
			q^{\frac{3\ell^2+\ell}{2}+6k\ell+3b\ell}
			}{q^{3\ell^2}}}	
		{\jacprod{
			q^{\frac{3\ell^2+13\ell}{2}+3b\ell}
		}{q^{3\ell^2}}}		
		\SSeries{ \frac{3\ell^2+7\ell}{2}+3b\ell}{3\ell^2+6b\ell+13\ell}{3\ell^2}
	\\
	&=
		-\frac{\aqprod{q^{3\ell^2}}{q^{3\ell^2}}{\infty}^2
			\jacprod{q^{3k\ell-3\ell}, q^{\frac{3\ell^2+7\ell}{2}+3b\ell+3k\ell},
			q^{\frac{3\ell^2+\ell}{2}+6k\ell+3b\ell}
			}{q^{3\ell^2}}}	
		{\jacprod{q^{-3\ell}, q^{\frac{3\ell^2+7\ell}{2}+3b\ell}, q^{-3k\ell},
			q^{\frac{3\ell^2+\ell}{2} + 3k\ell + 3b\ell}}{q^{3\ell^2}}}
		\\&\quad		
		-q^{3k\ell-3\ell}\frac{\jacprod{q^{\frac{3\ell^2+\ell}{2}+6k\ell+3b\ell}}{q^{3\ell^2}}}	
		{\jacprod{q^{\frac{3\ell^2+13\ell}{2}+3b\ell}}{q^{3\ell^2}}}
		\\&\qquad\times
		\left(
			\SSeries{-3\ell}{-3\ell^2-6b\ell-13\ell}{3\ell^2}
			-
			q^{3\ell^2+6b\ell+13\ell}
			\SSeries{ \frac{3\ell^2+7\ell}{2}+3b\ell}{3\ell^2+6b\ell+13\ell}{3\ell^2}
		\right)
.
\end{align*}

We would like to add in the terms to see 
$g(q^{\frac{3\ell^2+\ell}{2}+3b\ell},q^{3\ell^2})$
and apply Lemma \ref{ChanLemma1} again, however these terms are not well 
defined when $6b\equiv -1 \pmod{\ell}$. Instead we first apply Lemma 
\ref{ChanLemma1} with
$q\mapsto q^{3\ell^2}$,
$b_1 = q^{3\ell}$,
$b_2 = z^{-1}q^{3\ell^2-3\ell}$,
$b_3 = zq^{3\ell^2}$,
$b_4 = z^{-2}q^{3\ell^2}$,
alter the series with (\ref{SigmaProperty1}), (\ref{SigmaProperty2}),  and
(\ref{SigmaProperty3}), 
and simplify to obtain
\begin{align*}
	&
	-\frac{\jacprod{z}{q^{3\ell^2}}}{\jacprod{z^3}{q^{3\ell^2}}}
		\SSeries{z}{z^6q^{-9\ell^2}}{q^{3\ell^2}}
	-z^5\frac{\jacprod{z}{q^{3\ell^2}}}{\jacprod{z^3}{q^{3\ell^2}}}
		\SSeries{z^2}{z^6}{q^{3\ell^2}}
	\\
	&=	
	-z\frac{\aqprod{q^{3\ell^2}}{q^{3\ell^2}}{\infty}^2
			\jacprod{z^{-1}q^{3\ell}, z^{-2}q^{3\ell^2-3\ell}}{q^{3\ell^2}}}
		{\jacprod{q^{3\ell}, z^{-1}q^{3\ell^2-3\ell},z^{-2}q^{3\ell^2}}{q^{3\ell^2}}}
		\\&\quad
	+q^{-3\ell}\frac{\jacprod{z}{q^{3\ell^2}}}{\jacprod{z^{-1}q^{3\ell^2-6\ell}}{q^{3\ell^2}}}
		\SSeries{q^{-3\ell}}{z^{-2}q^{-12\ell}}{q^{3\ell^2}}
	-z^2q^{9\ell}\frac{\jacprod{z}{q^{3\ell^2}}}{\jacprod{z^{-1}q^{3\ell^2-6\ell}}{q^{3\ell^2}}}
		\SSeries{zq^{3\ell}}{z^2q^{12\ell}}{q^{3\ell^2}}
.
\end{align*}
Thus by the definition of $g(z,q^{3\ell^2})$ we have
\begin{align*}
	&	
	-S^*(z^{-2},q^{3\ell^2})+z^2\SSeries{z}{z^2}{q^{3\ell^2}}
	\\
	&= -g(z,q^{3\ell^2})
	+z\frac{\aqprod{q^{3\ell^2}}{q^{3\ell^2}}{\infty}^2
		\jacprod{z^{-1}q^{3\ell},z^{-2}q^{3\ell^2-3\ell}}{q^{3\ell^2}}
		}{\jacprod{q^{3\ell},z^{-1}q^{3\ell^2-3\ell},z^{-2}q^{3\ell^2}}{q^{3\ell^2}}}	
		\\&\quad
		-q^{-3\ell}\frac{\jacprod{z}{q^{3\ell^2}}}
		{\jacprod{z^{-1}q^{3\ell^2-6\ell}}{q^{3\ell^2}}}
		\Parans{\SSeries{q^{-3\ell}}{z^{-2}q^{-12\ell}}{q^{3\ell^2}}
			-z^2q^{12\ell}\SSeries{zq^{3\ell}}{z^2q^{12\ell}}{q^{3\ell^2}}	
		}
\end{align*}
and here it is now valid to set $z = q^{\frac{3\ell^2+\ell}{2}+3b\ell}$.

Thus
\begin{align*}
	&V_\ell^6(6b+1)-q^{2b+1}U_\ell^6(6b+5)
	\\
	&=
	-g(q^{\frac{3\ell^2+\ell}{2}+3b\ell},q^{3\ell^2})
	-\sum_{k=2}^{\ell-1} q^{6bk+6k^2+k-3k\ell}
		\frac{\aqprod{q^{3\ell^2}}{q^{3\ell^2}}{\infty}^2
			\jacprod{
			q^{3k\ell-3\ell},
			q^{\frac{3\ell^2+7\ell}{2}+3b\ell+3k\ell},
			q^{\frac{3\ell^2+\ell}{2}+6k\ell+3b\ell}
			}{q^{3\ell^2}}}	
		{\jacprod{
			q^{-3\ell},
			q^{\frac{3\ell^2+7\ell}{2}+3b\ell},
			q^{-3k\ell},
			q^{\frac{3\ell^2+\ell}{2} + 3k\ell + 3b\ell}
			}{q^{3\ell^2}}}		
	\\&\quad
	+q^{\frac{3\ell^2+\ell}{2}+3b\ell }
	\frac{\aqprod{q^{3\ell^2}}{q^{3\ell^2}}{\infty}^2
		\jacprod{
			q^{-6b\ell-4\ell}, 
			q^{\frac{-3\ell^2+5\ell}{2}-3b\ell}
		}{q^{3\ell^2}}}
	{\jacprod{
		q^{3\ell}, 
		q^{\frac{3\ell^2-7\ell}{2}-3b\ell}, 
		q^{-6b\ell-\ell}
		}{q^{3\ell^2}}
	}
	\\&\quad
	+X\cdot\left(
		\SSeries{-3\ell}{-3\ell^2-6b\ell-13\ell}{3\ell^2}
		-q^{3\ell^2+6b\ell+13\ell}\SSeries{ \frac{3\ell^2+7\ell}{2}+3b\ell}{3\ell^2+6b\ell+13\ell}{3\ell^2}
	\right)
\end{align*}
where
\begin{align*}
	X &=
			-q^{6b+7-3\ell}
			-q^{-3\ell}
			\frac{\jacprod{
				q^{\frac{3\ell^2+\ell}{2}+3b\ell}
			}{q^{3\ell^2}}}
			{\jacprod{
				q^{\frac{3\ell^2-13\ell}{2}-3b\ell} 
			}{q^{3\ell^2}}}
		-\sum_{k=2}^{\ell-1} q^{6bk+6k^2+k-3\ell}
		\frac{\jacprod{
			q^{\frac{3\ell^2+\ell}{2}+6k\ell+3b\ell}
			}{q^{3\ell^2}}}	
		{\jacprod{
			q^{\frac{3\ell^2+13\ell}{2}+3b\ell}
		}{q^{3\ell^2}}}		
.
\end{align*}

We note in fact
\begin{align*}
	X
	&=
	-\sum_{k=0}^{\ell-1} q^{6bk+6k^2+k-3\ell}
	\frac{\jacprod{q^{\frac{3\ell^2+\ell}{2}+6k\ell+3b\ell}}{q^{3\ell^2}}}	
		{\jacprod{q^{\frac{3\ell^2+13\ell}{2}+3b\ell}}{q^{3\ell^2}}}		
.
\end{align*}
But by Euler's Pentagonal Numbers Theorem and the Jacobi Triple Product
identity we have
\begin{align*}
	\aqprod{q}{q}{\infty}
	&=
	\sum_{n=-\infty}^\infty	(-1)^n q^{\frac{n(3n+1)}{2}}
	\\
	&=
	\sum_{k=0}^{\ell-1}	
	\sum_{n=-\infty}^\infty	(-1)^{\ell n+2k+b} q^{\frac{ (\ell n+2k+b)(3(\ell n+2k+b)+1)}{2}}
	\\
	&=
	(-1)^b q^{\frac{3b^2+b}{2}}
	\sum_{k=0}^{\ell-1}q^{6bk+6k^2+k}
	\sum_{n=-\infty}^\infty (-1)^nq^{3b\ell n+6k\ell n + \frac{\ell n}{2}}q^{\frac{3\ell^2n^2}{2}}
	\\
	&=
	(-1)^b q^{\frac{3b^2+b}{2}}
	\aqprod{q^{3\ell^2}}{q^{3\ell^2}}{\infty}
	\sum_{k=0}^{\ell-1}q^{6bk+6k^2+k}\jacprod{q^{\frac{3\ell^2+\ell}{2}+3b\ell+6k\ell} } {q^{3\ell^2}}
.
\end{align*}
Thus
\begin{align*}
	X &= (-1)^{b+1}q^{-\frac{3b^2+b}{2}-3\ell}\frac{\aqprod{q}{q}{\infty}}
		{\aqprod{q^{3\ell^2}}{q^{3\ell^2}}{\infty}
		\jacprod{q^{\frac{3\ell^2+13\ell}{2}+3b\ell }}{q^{3\ell^2}}}
.
\end{align*}
\end{proof}

\begin{lemma}\label{LemmaFor6b4}
For any integer $b$,
\begin{align*}
	&
	V_\ell^6(6b+4)-q^{2b+2}U_\ell^6(6b+8)
	\\
	&=	
		g(3\ell^2-3b\ell-2\ell,q^{3\ell^2})
		+
		\sum_{k=2}^{\ell-1} q^{6k^2+4k+6bk}
			\frac{\aqprod{q^{3\ell^2}}{q^{3\ell^2}}{\infty}^2
			\jacprod{q^{-3k\ell+3\ell}, q^{3\ell^2-3b\ell-3k\ell-5\ell}, q^{3\ell^2-3b\ell-6k\ell-2\ell}
				}{q^{3\ell^2}}
			}{\jacprod{q^{3\ell}, q^{3\ell^2-3b\ell-5\ell}, q^{3k\ell}, q^{3\ell^2-3k\ell-3b\ell-2\ell}
			}{q^{3\ell^2}}}
		\\&\quad
		-
		\frac{\aqprod{q^{3\ell^2}}{q^{3\ell^2}}{\infty}^2
			\jacprod{q^{3\ell^2-3b\ell+\ell}, q^{6\ell^2-6b\ell-7\ell}}{q^{3\ell^2}}
		}{\jacprod{q^{3\ell},q^{3\ell^2 -3b\ell-5\ell},q^{6\ell^2 -6b\ell-4\ell}}{q^{3\ell^2}}}
		\\&\quad
		+(-1)^{\frac{\ell-1}{2}+b}
		q^{4\ell+\frac{3b\ell-3b^2}{2}-2b-\frac{3\ell^2+5}{8}}
		\frac{\aqprod{q}{q}{\infty}}
		{\aqprod{q^{3\ell^2}}{q^{3\ell^2}}{\infty}\jacprod{q^{3b\ell+8\ell}}{q^{3\ell^2}} }
		\\&\qquad\qquad
		\times\left(
			\SSeries{3\ell}{6b\ell+16\ell-6\ell^2}{3\ell^2}
			-q^{6\ell^2-6b\ell-16\ell}\SSeries{3\ell^2-3b\ell-5\ell}{6\ell^2-6b\ell-16\ell}{3\ell^2}	
		\right)
.
\end{align*}
\end{lemma}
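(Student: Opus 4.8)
The plan is to prove Lemma~\ref{LemmaFor6b4} by running the argument of Lemma~\ref{LemmaFor6b1} with the exponents shifted to reflect the pair $6b+4$, $6b+8$ in place of $6b+1$, $6b+5$. First I would write
\begin{align*}
	V_\ell^6(6b+4) - q^{2b+2}U_\ell^6(6b+8)
	&= \sum_{n\neq 0}\frac{q^{6n^2+(6b+4)n}}{1-q^{3\ell n}}
	- q^{2b+2}\sum_{n=-\infty}^\infty \frac{q^{6n^2+(6b+8)n}}{1-q^{3\ell n+\ell}},
\end{align*}
and split each sum into residue classes modulo $\ell$: in the $V$-sum replace $n$ by $\ell n+k$ with $0\le k\le \ell-1$ and $(n,k)\ne(0,0)$, and in the $U$-sum replace $n$ by the analogous shift $\ell n-k-b+\tfrac{\ell-1}{2}$ used in the proof of Lemma~\ref{LemmaFor6b1}, so that each inner sum becomes an $S$- or $S^*$-series in base $q^{3\ell^2}$. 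Then I would use (\ref{SigmaProperty1}), (\ref{SigmaProperty3}) and (\ref{SigmaStarProperty}) to align the arguments of the $S$-series coming from the two sums and combine them term by term in $k$.

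Next I would isolate the $k=0$ and $k=1$ summands. The $k=0$ contribution together with the $S^*$ piece is designed to assemble into $g(q^{3\ell^2-3b\ell-2\ell},q^{3\ell^2})$ (recall that $g$ is defined exactly so that it absorbs a combination of $S^*(z^{-2},\cdot)$ and $S(z,z^2,\cdot)$), while the $k=1$ contribution produces the single leftover product term $-\frac{\aqprod{q^{3\ell^2}}{q^{3\ell^2}}{\infty}^2\jacprod{q^{3\ell^2-3b\ell+\ell},q^{6\ell^2-6b\ell-7\ell}}{q^{3\ell^2}}}{\jacprod{q^{3\ell},q^{3\ell^2-3b\ell-5\ell},q^{6\ell^2-6b\ell-4\ell}}{q^{3\ell^2}}}$ and, together with the remaining pair of $S$-series, the last line of the claimed identity. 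For each $2\le k\le\ell-1$ I would apply Lemma~\ref{ChanLemma1} with $q\mapsto q^{3\ell^2}$ and the four parameters $b_1,\dots,b_4$ chosen (mirroring the choice in Lemma~\ref{LemmaFor6b1}, now centred around $q^{\frac{3\ell^2+\ell}{2}-3b\ell-\cdots}$ rather than $q^{\frac{3\ell^2+7\ell}{2}+3b\ell}$) so that two of the four output $S$-series are precisely the pair occurring in the $k$-th summand and the other two telescope into the product $\frac{\aqprod{q^{3\ell^2}}{q^{3\ell^2}}{\infty}^2\jacprod{q^{-3k\ell+3\ell},q^{3\ell^2-3b\ell-3k\ell-5\ell},q^{3\ell^2-3b\ell-6k\ell-2\ell}}{q^{3\ell^2}}}{\jacprod{q^{3\ell},q^{3\ell^2-3b\ell-5\ell},q^{3k\ell},q^{3\ell^2-3k\ell-3b\ell-2\ell}}{q^{3\ell^2}}}$ recorded in the statement, plus a scalar multiple of the $k=1$ pair of $S$-series.

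The delicate point is the same one flagged in the proof of Lemma~\ref{LemmaFor6b1}: the terms one would like to add in order to recognize $g$ and to apply Lemma~\ref{ChanLemma1} a second time are not individually well defined on the residue class where a denominator theta vanishes. I would resolve this exactly as before: apply Lemma~\ref{ChanLemma1} once with a free variable $z$ (taking $b_1=q^{3\ell}$, $b_2=z^{-1}q^{3\ell^2-3\ell}$, $b_3=zq^{3\ell^2}$, $b_4=z^{-2}q^{3\ell^2}$), rewrite the output with (\ref{SigmaProperty1})--(\ref{SigmaProperty3}) so that it is visibly the defining relation for $g(z,q^{3\ell^2})$ plus manifestly regular products, and only then specialize $z=q^{3\ell^2-3b\ell-2\ell}$. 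Finally, the accumulated scalar coefficient multiplying the surviving pair of $S$-series is a finite sum of ratios of theta functions in base $q^{3\ell^2}$; I would collapse it to $(-1)^{\frac{\ell-1}{2}+b}q^{4\ell+\frac{3b\ell-3b^2}{2}-2b-\frac{3\ell^2+5}{8}}\frac{\aqprod{q}{q}{\infty}}{\aqprod{q^{3\ell^2}}{q^{3\ell^2}}{\infty}\jacprod{q^{3b\ell+8\ell}}{q^{3\ell^2}}}$ by writing $\aqprod{q}{q}{\infty}=\sum_n(-1)^nq^{n(3n+1)/2}$ (Euler's Pentagonal Number Theorem), splitting the summation index in the form $n=\ell m+2k+b+\tfrac{\ell-1}{2}$, and recognizing each inner sum as a Jacobi triple product; the shift by $\tfrac{\ell-1}{2}$ is exactly what produces the sign $(-1)^{\frac{\ell-1}{2}+b}$ and the fractional power of $q$.

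The main obstacle I expect is purely the exponent bookkeeping: matching the arguments of the $S$- and $S^*$-series produced by the two substitutions, verifying that the parameter choices in Lemma~\ref{ChanLemma1} reproduce the targeted $S$-series pair rather than some shifted variant, and confirming that the half-integer and eighth-integer exponents are genuine integers for odd $\ell$ (for instance $3\ell^2+5\equiv0\pmod 8$ and $\tfrac{3b(\ell-b)}{2}\in\mathbb{Z}$). None of this is conceptually new compared with Lemmas~\ref{LemmaFor3b2} and~\ref{LemmaFor6b1}, but because $6b+4$ and $6b+8$ shift every exponent the computation has to be carried out from scratch, and keeping track of signs and of the degenerate residue class is where the care is needed.
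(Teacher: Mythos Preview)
Your overall architecture is correct and is exactly what the paper does: split both series into residue classes mod $\ell$, isolate $k=0,1$, apply Lemma~\ref{ChanLemma1} to the pair of $S$-series for each $k\ge 2$, assemble the $k=0$ piece with the $S^*$ into $g$ via an auxiliary instance of Lemma~\ref{ChanLemma1} in a free variable $z$, and collapse the leftover coefficient by dissecting Euler's pentagonal identity.

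Where your plan goes wrong is in the assumption that the \emph{specific} substitutions and rearrangements from Lemma~\ref{LemmaFor6b1} carry over unchanged. They do not, for a parity reason. If you use the $U$-shift $n\mapsto\ell n-k-b+\tfrac{\ell-1}{2}$ and then apply (\ref{SigmaStarProperty}) as in Lemma~\ref{LemmaFor6b1}, the $S^*$ term becomes $-S^*(-3\ell^2-6b\ell-4\ell,3\ell^2)$. But $3\ell^2+6b\ell+4\ell$ is \emph{odd} for odd $\ell$, so this is not $-S^*(z^{-2},q^{3\ell^2})$ for any integer power $z=q^A$; in particular it cannot be matched with the stated $g(q^{3\ell^2-3b\ell-2\ell},q^{3\ell^2})$ by shifts from Lemma~\ref{LemmaGShifts}. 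Likewise the $k=0$ term coming from your $U$-substitution has first argument $\tfrac{3\ell^2-\ell}{2}-3b\ell$, which does not pair with that $S^*$.

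The paper avoids this by choosing a \emph{different} substitution in $U$, namely $n\mapsto\ell n-k+\ell-b-1$, and then performing \emph{no} rearrangement at all: the raw $S^*(6b\ell+4\ell-6\ell^2,3\ell^2)$ already equals $S^*(z^{-2},q^{3\ell^2})$ with $z=q^{3\ell^2-3b\ell-2\ell}$, and the $k=0$ summand from $U$ is already $-z^2S(z,z^2,q^{3\ell^2})$. Correspondingly, the auxiliary Lemma~\ref{ChanLemma1} is applied with $(b_1,b_2,b_3,b_4)=(q^{3\ell},\,zq^{-3\ell},\,z^{-1},\,z^2)$ rather than your $(q^{3\ell},\,z^{-1}q^{3\ell^2-3\ell},\,zq^{3\ell^2},\,z^{-2}q^{3\ell^2})$, and the Euler dissection uses $n\mapsto\ell n-\tfrac{\ell-1}{2}+2k+b$. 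So the lemma is not literally ``Lemma~\ref{LemmaFor6b1} with shifted exponents''; the shifts force a different (and in fact simpler) set of choices at each step.
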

\begin{proof}
For $V_\ell^6(6b+4)$ we use $n\mapsto \ell n+k$ and in
$U_\ell^6(6b+8)$ we use $n\mapsto \ell n-k+\ell-b-1$.
\begin{align*}
	&V_\ell^6(6b+4)-q^{2b+2}U_\ell^6(6b+8)
	\\
	&=
	\sum_{n\not=0} \frac{q^{6n^2+(6b+4)n}}{1-q^{3n\ell}}
	-q^{2b+2}
	\sum_{n=-\infty}^\infty \frac{q^{6n^2+(6b+8)n}}{1-q^{3n\ell+\ell}}
	\\
	&=
		\sum_{k=0}^{\ell-1} q^{6k^2+6bk+4k}
		\sum_{\substack{n=-\infty\\(n,k)\not=(0,0)}}^\infty
		\frac{q^{6\ell^2n^2 + 6b\ell n + 12k\ell n + 4\ell n}}
			{1-q^{3\ell^2n + 3k\ell }}
		\\&\quad
		-
		\sum_{k=0}^{\ell-1} q^{6\ell^2-6b\ell-12k\ell+6bk+6k^2-4\ell+4k}
		\sum_{n=-\infty}^\infty
		\frac{q^{6\ell^2n^2  + 12\ell^2n - 6b\ell n - 12k\ell n - 4\ell n}}
			{1-q^{3\ell^2n + 3\ell^2 - 3b\ell  -  3k\ell  - 2\ell}}
	\\
	&=
	S^*(6b\ell+4\ell-6\ell^2,3\ell^2)
	+
	\sum_{k=1}^{\ell-1} q^{6k^2+4k+6bk}
		\SSeries{3k\ell}{6b\ell+12k\ell+4\ell-6\ell^2}{3\ell^2}
		\\&\quad
		-
		\sum_{k=0}^{\ell-1} q^{6\ell^2-6b\ell-12k\ell+6bk+6k^2-4\ell+4k}
			\SSeries{-3k\ell+3\ell^2-3b\ell-2\ell}{6\ell^2-6b\ell-12k\ell-4\ell}{3\ell^2}
	\\
	&=
		S^*(6b\ell+4\ell-6\ell^2,  3\ell^2)
		-q^{6\ell^2-6b\ell-4\ell}\SSeries{3\ell^2-3b\ell-2\ell}{6\ell^2-6b\ell-4\ell}{3\ell^2}
		\\&\quad
		+q^{10+6b}\SSeries{3\ell}{6b\ell+16\ell-6\ell^2}{3\ell^2}
		-q^{10+6b+6\ell^2-6b\ell-16\ell}\SSeries{3\ell^2-3b\ell-5\ell}{6\ell^2-6b\ell-16\ell}{3\ell^2}
		\\&\quad
		+\sum_{k=2}^{\ell-1} q^{6k^2+4k+6bk}
		\SSeries{3k\ell}{6b\ell+12k\ell+4\ell-6\ell^2}{3\ell^2}
		\\&\quad
		-
		\sum_{k=2}^{\ell-1} q^{6\ell^2-6b\ell-12k\ell+6bk+6k^2-4\ell+4k}
		\SSeries{-3k\ell+3\ell^2-3b\ell-2\ell}{6\ell^2-6b\ell-12k\ell-4\ell}{3\ell^2}
	\\
	&=
		S^*(6b\ell+4\ell-6\ell^2,  3\ell^2)
		-q^{6\ell^2-6b\ell-4\ell}\SSeries{3\ell^2-3b\ell-2\ell}{6\ell^2-6b\ell-4\ell}{3\ell^2}
		\\&\quad
		+q^{10+6b}\SSeries{3\ell}{6b\ell+16\ell-6\ell^2}{3\ell^2}
		-q^{10+6b+6\ell^2-6b\ell-16\ell}\SSeries{3\ell^2-3b\ell-5\ell}{6\ell^2-6b\ell-16\ell}{3\ell^2}
		\\&\quad
		+\sum_{k=2}^{\ell-1} q^{6k^2+4k+6bk}
		\left(
			\SSeries{3k\ell}{6b\ell+12k\ell+4\ell-6\ell^2}{3\ell^2}
			\right.\\&\left.\qquad\qquad
			-
			q^{6\ell^2-6b\ell-12k\ell-4\ell}
			\SSeries{-3k\ell+3\ell^2-3b\ell-2\ell}{6\ell^2-6b\ell-12k\ell-4\ell}{3\ell^2}
		\right)
.
\end{align*}

We apply Lemma \ref{ChanLemma2} with
$q\mapsto q^{3\ell^2}$,
$b_1 = q^{3\ell}$,
$b_2 = q^{3\ell^2-3b\ell-5\ell}$,
$b_3 = q^{3k\ell}$,
$b_4 = q^{3\ell^2-3k\ell-3b\ell-2\ell}$
and simplify to find that
\begin{align*}
	&
		\SSeries{3k\ell}{6b\ell+12k\ell+4\ell-6\ell^2}{3\ell^2}
		-q^{6\ell^2-6b\ell-12k\ell-4\ell}
		\SSeries{3\ell^2-3k\ell-3b\ell-2\ell}{6\ell^2-6b\ell-12k\ell-4\ell}{3\ell^2}
	\\
	&=
		\frac{\aqprod{q^{3\ell^2}}{q^{3\ell^2}}{\infty}^2
			\jacprod{
				q^{-3k\ell+3\ell}, 
				q^{3\ell^2-3b\ell-3k\ell-5\ell}, 
				q^{3\ell^2-3b\ell-6k\ell-2\ell}
			}{q^{3\ell^2}}
		}{\jacprod{
			q^{3\ell},
			q^{3\ell^2-3b\ell-5\ell},
			q^{3k\ell},
			q^{3\ell^2-3k\ell-3b\ell-2\ell}
		}{q^{3\ell^2}}}
		\\&\quad		
		+q^{3\ell-3k\ell}\frac{\jacprod{
			q^{3\ell^2-3b\ell-6k\ell-2\ell}
		}{q^{3\ell^2}}
		}{\jacprod{
			q^{3\ell^2-3b\ell-8\ell}
		}{q^{3\ell^2}}}
		\SSeries{3\ell}{6b\ell+16\ell-6\ell^2}{3\ell^2}
		\\&\quad
		-q^{6\ell^2-6b\ell-3k\ell-13\ell}
		\frac{\jacprod{
			q^{3\ell^2-3b\ell-6k\ell-2\ell}
		}{q^{3\ell^2}}
		}{\jacprod{
			q^{3\ell^2-3b\ell-8\ell}
		}{q^{3\ell^2}}}
		\SSeries{3\ell^2-3b\ell-5\ell}{6\ell^2-6b\ell-16\ell}{3\ell^2}
	\\
	&=
		\frac{\aqprod{q^{3\ell^2}}{q^{3\ell^2}}{\infty}^2
			\jacprod{q^{-3k\ell+3\ell}, q^{3\ell^2-3b\ell-3k\ell-5\ell},q^{3\ell^2-3b\ell-6k\ell-2\ell}
			}{q^{3\ell^2}}
		}{\jacprod{q^{3\ell},q^{3\ell^2-3b\ell-5\ell},q^{3k\ell},
			q^{3\ell^2-3k\ell-3b\ell-2\ell}
		}{q^{3\ell^2}}}
		\\&\quad		
		+q^{3\ell-3k\ell}
		\frac{\jacprod{q^{3\ell^2-3b\ell-6k\ell-2\ell}}{q^{3\ell^2}}}
			{\jacprod{q^{3\ell^2-3b\ell-8\ell}}{q^{3\ell^2}}}
		\\&\qquad\times
		\left(
			\SSeries{3\ell}{6b\ell+16\ell-6\ell^2}{3\ell^2}
			-
			q^{6\ell^2-6b\ell-16\ell}
			\SSeries{3\ell^2-3b\ell-5\ell}{6\ell^2-6b\ell-16\ell}{3\ell^2}
		\right)
.
\end{align*}

This time there is an issue with $g(q^{3\ell^2-3b\ell-2\ell},q^{3\ell^2})$ when 
when $3b\equiv-2\pmod{\ell}$. To avoid this we first apply
Lemma \ref{ChanLemma1} with 
$q\mapsto q^{3\ell^2}$,
$b_1 = q^{3\ell}$,
$b_2 = zq^{-3\ell}$,
$b_3 = z^{-1}$,
$b_4 = z^2$	
and find that
\begin{align*}
	&
	\frac{\jacprod{z}{q^{3\ell^2}}}{\jacprod{z^3}{q^{3\ell^2}}}
		\SSeries{z}{z^6q^{-9\ell^2}}{q^{3\ell^2}}
	+z^5\frac{\jacprod{z}{q^{3\ell^2}}}{\jacprod{z^3}{q^{3\ell^2}}}
		\SSeries{z^2}{z^6}{q^{3\ell^2}}
	\\
	&=	
	\frac{\aqprod{q^{3\ell^2}}{q^{3\ell^2}}{\infty}^2
			\jacprod{z^2q^{-3\ell}, zq^{3\ell}}{q^{3\ell^2}}}
		{\jacprod{q^{3\ell}, zq^{-3\ell},z^2}{q^{3\ell^2}}}
		\\&\quad
	-q^{3\ell}\frac{\jacprod{z}{q^{3\ell^2}}}{\jacprod{zq^{-6\ell}}{q^{3\ell^2}}}
		\SSeries{q^{3\ell}}{z^{-2}q^{12\ell}}{q^{3\ell^2}}
	+z^2q^{-9\ell}\frac{\jacprod{z}{q^{3\ell^2}}}{\jacprod{zq^{-6\ell}}{q^{3\ell^2}}}
		\SSeries{zq^{-3\ell}}{z^2q^{-12\ell}}{q^{3\ell^2}}
.
\end{align*}

We then have
\begin{align*}
	&S^*(z^{-2},q^{3\ell^2})-z^2\SSeries{z}{z^2}{q^{3\ell^2}}
	\\
	&=
		g(z,q^{3\ell^2})
		-\frac{\aqprod{q^{3\ell^2}}{q^{3\ell^2}}{\infty}^2  
			\jacprod{z^2q^{-3\ell}, zq^{3\ell}}{q^{3\ell^2}} 
		}{\jacprod{q^{3\ell}, zq^{-3\ell}, z^2}{q^{3\ell^2}}}
		\\&\quad
		+q^{3\ell}\frac{\jacprod{z}{q^{3\ell^2}}}{\jacprod{zq^{-6\ell}}{q^{3\ell^2}}}
		\Parans{  
		\SSeries{q^{3\ell}}{z^{-2}q^{12\ell}}{q^{3\ell^2}}
		-z^{2}q^{-12\ell}\SSeries{zq^{-3\ell}}{z^{2}q^{-12\ell}}{q^{3\ell^2}}
		}
\end{align*}
and here we can set $z=q^{3\ell^2-3b\ell-2\ell}$.

Thus
\begin{align*}
	&V(6b+4)-q^{2b+2}U(6b+8)
	\\
	&=	
		g(q^{3\ell^2-3b\ell-2\ell},q^{3\ell^2})
		+
		\sum_{k=2}^{\ell-1} q^{6k^2+4k+6bk}
			\frac{\aqprod{q^{3\ell^2}}{q^{3\ell^2}}{\infty}^2
			\jacprod{
				q^{-3k\ell+3\ell}, 
				q^{3\ell^2-3b\ell-3k\ell-5\ell}, 
				q^{3\ell^2-3b\ell-6k\ell-2\ell}
			}{q^{3\ell^2}}
			}{\jacprod{
				q^{3\ell},
				q^{3\ell^2-3b\ell-5\ell},
				q^{3k\ell},
				q^{3\ell^2-3k\ell-3b\ell-2\ell}
			}{q^{3\ell^2}}}
		\\&\quad
		-
		\frac{\aqprod{q^{3\ell^2}}{q^{3\ell^2}}{\infty}^2
			\jacprod{
				q^{3\ell^2-3b\ell+\ell}, 
				q^{6\ell^2-6b\ell-7\ell} 
			}{q^{3\ell^2}}
		}{\jacprod{
			q^{3\ell},
			q^{3\ell^2 -3b\ell-5\ell},
			q^{6\ell^2 -6b\ell-4\ell}
		}{q^{3\ell^2}}}
		\\&\quad
		+Y\cdot\left(
			\SSeries{3\ell}{6b\ell+16\ell-6\ell^2}{3\ell^2}
			-q^{6\ell^2-6b\ell-16\ell}\SSeries{3\ell^2-3b\ell-5\ell}{6\ell^2-6b\ell-16\ell}{3\ell^2}	
		\right)
\end{align*}
where
\begin{align*}
	Y
	&=
		q^{10+6b}
		+
		q^{3\ell}\frac{\jacprod{
				q^{3\ell^2-3b\ell-2\ell}
			}{q^{3\ell^2}}
		}{\jacprod{
			q^{3\ell^2-3b\ell-8\ell} 
		}{q^{3\ell^2}}}
		+\sum_{k=2}^{\ell-1} q^{6k^2+4k+6bk-3k\ell+3\ell}
		\frac{\jacprod{
			q^{3\ell^2-3b\ell-6k\ell-2\ell}
		}{q^{3\ell^2}}
		}{\jacprod{
			q^{3\ell^2-3b\ell-8\ell} 
		}{q^{3\ell^2}}}
.
\end{align*}

We note
\begin{align*}
	Y
	&=
	\sum_{k=0}^{\ell-1}
	q^{6k^2+4k+6bk-3k\ell+3\ell}
	\frac{ \jacprod{q^{3b\ell+6k\ell+2\ell}}{q^{3\ell^2}} }
	{ \jacprod{q^{3b\ell+8\ell}}{q^{3\ell^2}} }
\end{align*}

In Euler's Pentagonal Numbers Theorem
we replace $n$ by $\ell n -\frac{\ell-1}{2}+2k+b$ to obtain
\begin{align*}
	\aqprod{q}{q}{\infty}
	&=
	\sum_{n=-\infty}^\infty (-1)^n q^{\frac{n(3n+1)}{2}}
	\\
	&=
	(-1)^{\frac{\ell-1}{2}+b}
	q^{-\ell+\frac{3b^2-3b\ell}{2}+2b+\frac{3\ell^2+5}{8}}
	\aqprod{q^{3\ell^2}}{q^{3\ell^2}}{\infty}
	\sum_{k=0}^{\ell-1}
	q^{6k^2+4k+6bk-3k\ell}
	\jacprod{q^{2\ell + 6k\ell + 3b\ell}}{q^{3\ell^2}}
\end{align*}
so that
\begin{align*}
	Y
	&=
	(-1)^{\frac{\ell-1}{2}+b}
	q^{4\ell+\frac{3b\ell-3b^2}{2}-2b-\frac{3\ell^2+5}{8}}
	\frac{\aqprod{q}{q}{\infty}}
	{\aqprod{q^{3\ell^2}}{q^{3\ell^2}}{\infty}
		\jacprod{q^{3b\ell+8\ell}}{q^{3\ell^2}} 
	}
.
\end{align*}
\end{proof}

Next we need identities for $g(z,q)$ and $h(z,q)$.
From the limit definitions, we find that $g(z,q)$ and $h(z,q)$ are basically logarithmic
derivatives of theta functions and are surprisingly related.
\begin{lemma}\label{LemmaLogThetaForGAndG*}
\begin{align*}
	g(z,q)
	&=
	1 - \sum_{n=0}^\infty \frac{z^2q^n}{1-z^2q^n} 
	+ \sum_{n=1}^\infty \frac{z^{-2}q^n}{1-z^{-2}q^n} 
	,\\
	h(z,q)
	&=
	- \sum_{n=0}^\infty \frac{zq^n}{1-zq^n} 
	+ \sum_{n=1}^\infty \frac{z^{-1}q^n}{1-z^{-1}q^n} 
.
\end{align*}
\end{lemma}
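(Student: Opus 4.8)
The plan is to bypass the series definitions of $g$ and $h$ and work instead with the limiting infinite-product representations recorded just after the definition of $g$ and just after Lemma~\ref{ChanLemma2}, recognizing the claimed right-hand sides as logarithmic derivatives of theta products. Throughout I will use the notation $D(x):=\sum_{n\ge0}\frac{xq^n}{1-xq^n}-\sum_{n\ge1}\frac{x^{-1}q^n}{1-x^{-1}q^n}$, and the term-by-term formula $\frac{d}{dt}\log\jacprod{f(t)}{q}=-\frac{f'}{f}\,D(f)$, valid for $\jacprod{f}{q}=\prod_{n\ge0}(1-fq^n)\prod_{n\ge1}(1-f^{-1}q^n)$.

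For $h$, Lemma~\ref{ChanLemma2} (take $b_2=z$, delete the $n=0$ term $\tfrac1{1-b_1}$ of $T(b_1,b_1/z,q)$, let $b_1\to1$) gives $h(z,q)=\lim_{b\to1}\Parans{\dfrac{\aqprod{q}{q}{\infty}^2\jacprod{z/b}{q}}{\jacprod{b,z}{q}}-\dfrac1{1-b}}$. Writing $\jacprod{b}{q}=(1-b)(qb;q)_\infty(q/b;q)_\infty$, the quotient becomes $G(b)/(1-b)$ with $G(b)=\dfrac{\aqprod{q}{q}{\infty}^2\jacprod{z/b}{q}}{(qb;q)_\infty(q/b;q)_\infty\jacprod{z}{q}}$, and $G(1)=1$, so the limit equals $-G'(1)=-(\log G)'(1)$. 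Differentiating $\log G$ termwise: the factor $\jacprod{z/b}{q}$ contributes $D(z)$ at $b=1$, while the contributions of $(qb;q)_\infty$ and $(q/b;q)_\infty$ are $\mp\sum_{n\ge1}\frac{q^n}{1-q^n}$ and cancel; hence $h(z,q)=-D(z)$, which is exactly the stated formula.

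For $g$, I would start from $g(z,q)=\lim_{w\to z}\Parans{\dfrac{\jacprod{w^2/z,\,1/z,\,w^2}{q}\aqprod{q}{q}{\infty}^2}{\jacprod{w,\,1/w,\,zw,\,z/w}{q}}-\dfrac1{1-z/w}}$. Splitting the vanishing factor $1-z/w$ off $\jacprod{z/w}{q}=(1-z/w)(qz/w;q)_\infty(qw/z;q)_\infty$ turns the quotient into $Q(w)/(1-z/w)$ with $Q(z)=1$, so the limit equals $zQ'(z)=z(\log Q)'(z)$. Termwise differentiation, evaluated at $w=z$, gives: numerator factors $\jacprod{w^2/z}{q},\jacprod{w^2}{q}$ contribute $-\tfrac2z D(z),-\tfrac2z D(z^2)$; denominator factors $\jacprod{w}{q},\jacprod{1/w}{q},\jacprod{zw}{q}$ contribute $\tfrac1z D(z),-\tfrac1z D(z^{-1}),\tfrac1z D(z^2)$; and the leftover factor $(qz/w;q)_\infty(qw/z;q)_\infty$, being invariant under the involution $w\mapsto z^2/w$ whose derivative at the fixed point $w=z$ is $-1$, contributes $0$. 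Summing and multiplying by $z$ yields $g(z,q)=-D(z)-D(z^{-1})-D(z^2)$.

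It then remains only to invoke the elementary identity $D(x)+D(x^{-1})=-1$ (cancel the $n\ge1$ tails of the four sums and simplify $\tfrac{x}{1-x}+\tfrac1{x-1}$). This collapses $g$ to $1-D(z^2)=1-\sum_{n\ge0}\frac{z^2q^n}{1-z^2q^n}+\sum_{n\ge1}\frac{z^{-2}q^n}{1-z^{-2}q^n}$, while $h=-D(z)$ is already the claimed expression. The main points requiring care are the justification of termwise differentiation of the infinite products (legitimate for $|q|<1$ with $z$ away from the finitely many relevant poles, the identities then holding as identities of meromorphic functions, hence formally) and the bookkeeping showing the factor split off from $\jacprod{z/w}{q}$ has vanishing derivative at $w=z$; I expect this last verification to be the only real, though minor, obstacle.
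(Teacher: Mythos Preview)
Your proof is correct and follows essentially the same approach as the paper: both derive the identities by applying L'H\^opital/logarithmic differentiation to the limiting infinite-product representations of $g$ and $h$. Your use of the shorthand $D(x)$ and the involution $w\mapsto z^2/w$ to dispatch the factor $(qz/w;q)_\infty(qw/z;q)_\infty$ is a mild streamlining of the paper's term-by-term expansion, but the underlying computation is the same.
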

\begin{proof}
We have
\begin{align*}
	g(z,q) 
	&=
		\lim_{w\rightarrow z}
		\Parans{
			\frac{\jacprod{w^2z^{-1},z^{-1},w^2}{q}\aqprod{q}{q}{\infty}^2}
			{\jacprod{w,w^{-1},zw,zw^{-1}}{q}}
			-\frac{1}{1-z/w}
		}
	\\
	&=
		\lim_{w\rightarrow z}
		\frac{
			\frac{\jacprod{w^2z^{-1},z^{-1},w^2}{q}\aqprod{q}{q}{\infty}^2}
			{\jacprod{w,w^{-1},zw}{q}\aqprod{zw^{-1}q,z^{-1}wq}{q}{\infty} }
			-
			1
		}{1-z/w}
	\\		
	&=
		\lim_{w\rightarrow z}
		z\cdot
		\frac{\partial}{\partial w}
			\frac{\jacprod{w^2z^{-1},z^{-1},w^2}{q}\aqprod{q}{q}{\infty}^2}
			{\jacprod{w,w^{-1},zw}{q}\aqprod{zw^{-1}q,z^{-1}wq}{q}{\infty}}
.
\end{align*}
We handle the partial derivative with a logarithmic derivative. We have
\begin{align*}
	&
	\frac{\jacprod{w,w^{-1},zw}{q}\aqprod{zw^{-1}q,z^{-1}wq}{q}{\infty}}
	{\jacprod{w^2z^{-1},z^{-1},w^2}{q}\aqprod{q}{q}{\infty}^2}
	\cdot		
	\frac{\partial}{\partial w}
	\frac{\jacprod{w^2z^{-1},z^{-1},w^2}{q}\aqprod{q}{q}{\infty}^2}
	{\jacprod{w,w^{-1},zw}{q}\aqprod{zw^{-1}q,z^{-1}wq}{q}{\infty}}
	\\&=
		-2\sum_{n=0}^\infty \frac{wz^{-1}q^n}{1-w^{2}z^{-1}q^n}
		+2\sum_{n=1}^\infty \frac{w^{-3}zq^n}{1-w^{-2}zq^n}
		-2\sum_{n=0}^\infty \frac{wq^n}{1-w^{2}zq^n}
		+2\sum_{n=1}^\infty \frac{w^{-3}q^n}{1-w^{-2}q^n}
		+\sum_{n=0}^\infty \frac{q^n}{1-wq^n}
		\\&\quad
		-\sum_{n=1}^\infty \frac{w^{-2}q^n}{1-w^{-1}q^n}
		-\sum_{n=0}^\infty \frac{w^{-2}q^n}{1-w^{-1}q^n}
		+\sum_{n=1}^\infty \frac{q^n}{1-wq^n}
		+\sum_{n=0}^\infty \frac{zq^n}{1-wzq^n}
		-\sum_{n=1}^\infty \frac{w^{-2}z^{-1}q^n}{1-w^{-1}z^{-1}q^n}
		\\&\quad
		-\sum_{n=1}^\infty \frac{w^{-2}zq^n}{1-w^{-1}zq^n}
		+\sum_{n=1}^\infty \frac{z^{-1}q^n}{1-wz^{-1}q^n}
.
\end{align*}
Multiplying by $z$, letting $w\rightarrow z$, and simplifying then gives
the result. The proof for $h(z,q)$ is similar.
\end{proof}

Here we see taking $b_4=1/b_1$ gives nice cancellations.
Since $h(z,q) = g(z^{1/2},q)-1$, 
we see we need only prove identities for one of the functions.
Using Lemma \ref{LemmaLogThetaForGAndG*} we can 
quickly deduce the following identities.
\begin{lemma}\label{LemmaGShifts}
\begin{align*}
	g(z,q) - g(zq,q) &= 2
	,\\
	g(z,q) + g(q/z,q) &= 1
	,\\
	h(z,q) -  h(zq,q) &= 1
	,\\
	h(z,q) + h(z/q,q) &= 0 
.
\end{align*}
\end{lemma}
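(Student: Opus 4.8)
The plan is to reduce everything to Lemma~\ref{LemmaLogThetaForGAndG*}, which rewrites $g(z,q)$ and $h(z,q)$ as elementary Lambert-type series; comparing those two series one reads off at once that $g(z,q)=h(z^{2},q)+1$, so it suffices to establish the two identities for $h$ and then deduce the two for $g$ formally. The only algebraic input needed is the elementary fact that for $x\neq 1$ one has $\tfrac{x}{1-x}+\tfrac{x^{-1}}{1-x^{-1}}=-1$, equivalently $\tfrac{x^{-1}}{1-x^{-1}}=-\tfrac{1}{1-x}$.

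First I would prove the $h$-identities from $h(z,q)=-\sum_{n\ge 0}\tfrac{zq^{n}}{1-zq^{n}}+\sum_{n\ge 1}\tfrac{z^{-1}q^{n}}{1-z^{-1}q^{n}}$. Substituting $z\mapsto zq$ shifts the first sum to $-\sum_{n\ge 1}\tfrac{zq^{n}}{1-zq^{n}}$ and the second to $\sum_{n\ge 0}\tfrac{z^{-1}q^{n}}{1-z^{-1}q^{n}}$, so in the difference $h(z,q)-h(zq,q)$ every term with $n\ge 1$ cancels and what is left is $-\tfrac{z}{1-z}-\tfrac{z^{-1}}{1-z^{-1}}=1$. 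Substituting $z\mapsto q/z$ instead turns $h(q/z,q)$ into $-h(z,q)$ term by term (the $z$-sum of one matching the $z^{-1}$-sum of the other), so $h(z,q)+h(q/z,q)=0$. The $g$-identities then follow purely formally: $g(z,q)-g(zq,q)=h(z^{2},q)-h(z^{2}q^{2},q)=\bigl(h(z^{2},q)-h(z^{2}q,q)\bigr)+\bigl(h(z^{2}q,q)-h(z^{2}q^{2},q)\bigr)=1+1=2$; and, using $h(z^{2},q)+h(q/z^{2},q)=0$ together with $h(q/z^{2},q)-h(q^{2}/z^{2},q)=1$, one gets $g(z,q)+g(q/z,q)=h(z^{2},q)+h(q^{2}/z^{2},q)+2=0-1+2=1$. (Alternatively one can run the identical telescoping argument directly on the Lambert series for $g$ in Lemma~\ref{LemmaLogThetaForGAndG*}; there $z\mapsto zq$ moves the $z^{\pm 2}$-sums by $q^{\pm 2}$, so two pairs of boundary terms survive, each pair summing to $1$ by the displayed fact, and similarly for $z\mapsto q/z$.)

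I do not expect any genuine obstacle here: the entire lemma comes down to carefully accounting for the finitely many boundary terms that a Lambert sum leaves behind when its index is shifted, and not dropping the sign in $\tfrac{x^{-1}}{1-x^{-1}}=-\tfrac{1}{1-x}$. This is presumably why it is stated without comment, and why---via $g(z,q)=h(z^{2},q)+1$---only one of the two functions really needs to be touched.
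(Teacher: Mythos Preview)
Your argument is correct and follows exactly the route the paper indicates: use the Lambert series of Lemma~\ref{LemmaLogThetaForGAndG*} and, as the paper remarks just before the lemma, reduce to one function via $g(z,q)=h(z^{2},q)+1$. One caveat worth flagging: the fourth identity as printed reads $h(z,q)+h(z/q,q)=0$, but what you actually (and correctly) establish is $h(z,q)+h(q/z,q)=0$; the printed form is a typo, since $h(z/q,q)=h(z,q)+1$ by your first $h$-identity, and your derivation of $g(z,q)+g(q/z,q)=1$ genuinely needs the $q/z$ version.
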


We will also need formulas that turn $g(z,q)$ and $h(z,q)$ into products. 
For the $3$-dissections, we can actually just use a product identity for a 
similar function from \cite{AS}. We will use this formula only for $g$ and not
$h$.
\begin{lemma}\label{CorollaryGProducts}
\begin{align*}
	3 - 2g(z,q) - g(z^2,q) + g(z^4,q)
	&=
	\frac{\jacprod{z^6}{q}^2\aqprod{q}{q}{\infty}^2}
	{\jacprod{z^2}{q}^2\jacprod{z^8}{q}}
.
\end{align*}
\end{lemma}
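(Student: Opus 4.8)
The plan is to treat the statement as an assertion about the meromorphic function $g(\cdot,q)$. By Lemma~\ref{LemmaLogThetaForGAndG*}, $g(x,q)$ depends on $x$ only through $x^{2}$, so write $g(x,q)=1+\psi(x^{2},q)$, where
\[
	\psi(w,q):=-\sum_{n\ge 0}\frac{wq^{n}}{1-wq^{n}}+\sum_{n\ge 1}\frac{w^{-1}q^{n}}{1-w^{-1}q^{n}}=w\,\partial_{w}\log\jacprod{w}{q}
\]
is a logarithmic derivative of a theta function. The constants combine to $1$, and after the substitution $w=z^{2}$ the identity to be proved reads
\[
	1-2\psi(w,q)-\psi(w^{2},q)+\psi(w^{4},q)=\frac{\jacprod{w^{3}}{q}^{2}\aqprod{q}{q}{\infty}^{2}}{\jacprod{w}{q}^{2}\jacprod{w^{4}}{q}}.
\]
In this form it is an instance of a product identity established by Atkin and Swinnerton-Dyer in \cite{AS} for the analogous logarithmic-derivative function, so the shortest route is to quote their formula and match the normalization; for completeness I would also give the self-contained ``same poles, same periods'' argument outlined below.

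First I would record how both sides behave under $w\mapsto wq$ and under $w\mapsto q^{2}/w$. On the left, the relations $g(z,q)-g(zq,q)=2$ and $g(z,q)+g(q/z,q)=1$ of Lemma~\ref{LemmaGShifts} translate into $\psi(wq,q)=\psi(w,q)-1$ and $\psi(q^{2}/w,q)=-1-\psi(w,q)$, and a short computation then shows the left side is invariant under $w\mapsto wq$ and changes sign under $w\mapsto q^{2}/w$. On the right, $\jacprod{wq}{q}=-w^{-1}\jacprod{w}{q}$ together with $\jacprod{w}{q}=\jacprod{q/w}{q}$ yield the same two properties by direct manipulation of the theta factors. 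Both sides are meromorphic on $\mathbb{C}\setminus\{0\}$ with only simple poles, and from the series for $\psi$ (respectively the zeros and poles of the theta quotient) these poles lie exactly on the set $\{\,w:w^{4}\in q^{\mathbb{Z}}\,\}$.

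It then remains to match residues. Using the two symmetries, this reduces to residue computations at finitely many representative poles: for instance $w=1$, $w=-1$, $w=q^{1/2}$ and $w=i$ display the distinct local patterns, according to which of $\psi(w,q)$, $\psi(w^{2},q)$, $\psi(w^{4},q)$ are simultaneously singular. At each such point the left residue is read off from Lemma~\ref{LemmaLogThetaForGAndG*} with the weights $-2,-1,1$, while the right residue comes from the first-order vanishing of $\jacprod{w^{d}}{q}$ at its zeros (with a computable leading coefficient); elementary evaluations such as $\jacprod{-i}{q}/\jacprod{i}{q}=i$ and $\jacprod{q^{3/2}}{q}/\jacprod{q^{1/2}}{q}=-q^{-1/2}$ make the two sides agree. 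Once all residues match, the difference of the two sides is a holomorphic $q$-periodic function on $\mathbb{C}\setminus\{0\}$, hence a constant $c=c(q)$; since each side changes sign under $w\mapsto q^{2}/w$, so does the constant, whence $c=-c$ and $c=0$. The main obstacle is exactly this residue bookkeeping: the poles of $\psi(w^{4},q)$ are the densest, and at the special fourth roots of $q^{m}$ where two or three $\psi$-terms are singular at once one must check that the weights $-2,-1,1$ conspire correctly against the single theta quotient. Invoking \cite{AS} instead sidesteps this entirely, leaving only the change of variables.
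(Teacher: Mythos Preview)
Your proposal is correct and follows essentially the same route as the paper: both reduce the identity to equation~(5.6) of \cite{AS} via the logarithmic-derivative representation of $g$. The paper reaches it by introducing the Atkin--Swinnerton-Dyer function $k(z,q)$ and checking $k(z,q)=2-g(z^{1/2},q)-g(z,q)$, whereas you bypass $k$ by writing $g(x,q)=1+\psi(x^{2},q)$ directly; your additional self-contained elliptic-function sketch (periodicity, odd symmetry, residue matching) is a valid alternative that the paper does not attempt.
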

\begin{proof}
We let
\begin{align*}
	k(z,q)
	&=
	z\frac{\jacprod{z^2}{q}}{\jacprod{z}{q}}
	\sum_{n=-\infty}^\infty \frac{(-1)^nq^{3n(n+1)/2}}{1-zq^n}
	-z^3\sum_{n=-\infty}^\infty \frac{(-1)^nq^{3n(n+1)/2}z^{3n}}{1-z^2q^n}
	-\sum_{n\not=0} \frac{(-1)^nq^{3n(n+1)/2}z^{-3n}}{1-q^n}
.
\end{align*}
With Lemma 7 of \cite{AS} (which is also follows by a specialization of 
Theorem 2.1 \cite{Chan}) ,we find that
\begin{align*}
	k(z,q) &= 
	\lim_{w\rightarrow z}
	\frac{1}{1-z/w}
	-
	\frac{\jacprod{w,w^2}{q}\aqprod{q}{q}{\infty}}{\jacprod{z,zw,zw^{-1}}{q}}
	\\
	&=
	\sum_{n=0}^\infty \frac{zq^n}{1-zq^n}
	-\sum_{n=1}^\infty \frac{z^{-1}q^n}{1-z^{-1}q^n}
	+\sum_{n=0}^\infty \frac{z^2 q^n}{1-z^2 q^n}
	-\sum_{n=1}^\infty \frac{z^{-2}q^n}{1-z^{-2}q^n}
,	
\end{align*}
so that
\begin{align*}
	k(z,q) &= 2 - g(z^{1/2},q)  - g(z,q).
\end{align*}

Replacing $z$ by $z^2$ in equation (5.6) of \cite{AS} we have
\begin{align*}
	2k(z^2,q) - k(z^4,q) + 1
	&=
	\frac{\jacprod{z^6}{q}^2\aqprod{q}{q}{\infty}^2}
	{\jacprod{z^2}{q}^2\jacprod{z^8}{q}}
.
\end{align*}
But also $2k(z^2,q) - k(z^4,q) + 1 = 	3 - 2g(z,q) - g(z^2,q) + g(z^4,q)$
and so we are done.
\end{proof}

For the $5$ and $7$-dissections, we need the following product formulas.
\begin{lemma}\label{LemmaGToProducts}
\begin{align*}
	4g(z,q) - 2g(z^2,q)
	&=
	3 
	- \frac{z^2\aqprod{q}{q}{\infty}^2\jacprod{z^2,z^8}{q}}
		{\jacprod{z^4}{q}^2\jacprod{z^6}{q}}
	- \frac{\aqprod{q}{q}{\infty}^2\jacprod{z^4}{q}^3}
		{\jacprod{z^2}{q}^3\jacprod{z^6}{q}}
	,\\
	4h(z,q) - 2h(z^2,q)
	&=
		1 
		- \frac{z\aqprod{q}{q}{\infty}^2\jacprod{z,z^4}{q}}
			{\jacprod{z^2}{q}^2\jacprod{z^3}{q}}
		- \frac{\aqprod{q}{q}{\infty}^2\jacprod{z^2}{q}^3}
			{\jacprod{z}{q}^3\jacprod{z^3}{q}}
.
\end{align*}
\end{lemma}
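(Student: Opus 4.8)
First, the second identity follows formally from the first. By Lemma~\ref{LemmaLogThetaForGAndG*} we have $h(z,q)=g(z^{1/2},q)-1$, so
\[
4h(z,q)-2h(z^{2},q)=\bigl(4g(z^{1/2},q)-2g(z,q)\bigr)-2 ,
\]
and substituting $z\mapsto z^{1/2}$ in the first identity turns the parenthesized term into exactly $3-\tfrac{z\aqprod{q}{q}{\infty}^{2}\jacprod{z,z^{4}}{q}}{\jacprod{z^{2}}{q}^{2}\jacprod{z^{3}}{q}}-\tfrac{\aqprod{q}{q}{\infty}^{2}\jacprod{z^{2}}{q}^{3}}{\jacprod{z}{q}^{3}\jacprod{z^{3}}{q}}$; subtracting $2$ gives the stated formula for $h$. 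So it suffices to prove the identity for $g$, and I would do this by the ``equality of two elliptic functions'' argument familiar in this setting (cf.\ \cite{AS,Garvan1}).

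Put $x=z^{2}$. By Lemma~\ref{LemmaLogThetaForGAndG*}, $g(z,q)$ and $g(z^{2},q)$ are normalized logarithmic derivatives of the theta functions $\jacprod{x}{q}$ and $\jacprod{x^{2}}{q}$ in the variable $x$; call the two sides of the $g$-identity $L(x)$ and $R(x)$. The first point is that \emph{$L$ and $R$ are invariant under $x\mapsto qx$}. For $L$ this follows from the shift relations of Lemma~\ref{LemmaGShifts}: in the variable $x$, $g(z,q)$ and $g(z^{2},q)$ change by $-1$ and $-2$ under $x\mapsto qx$, and these cancel in $4g(z,q)-2g(z^{2},q)$. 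For $R$ one checks, using $\jacprod{qw}{q}=-w^{-1}\jacprod{w}{q}$ and its iterates, that in each of the two product terms the total powers of $q$ and of $x$ acquired under $x\mapsto qx$ are zero. Hence $D:=L-R$ descends to a meromorphic function on the torus $\mathbb{C}^{*}/q^{\mathbb{Z}}$.

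Next I would show $D$ is holomorphic, hence constant. By Lemma~\ref{LemmaLogThetaForGAndG*}, $L$ has only simple poles, located exactly at the points with $x^{2}\in q^{\mathbb{Z}}$, with residues one reads off at once. In $R$, the zeros of $\jacprod{x}{q}$ and $\jacprod{x^{4}}{q}$ in the numerators cancel the surplus zeros of $\jacprod{x^{2}}{q}^{2}$, $\jacprod{x^{3}}{q}$ (resp.\ $\jacprod{x}{q}^{3}$) in the denominators; using $\jacprod{w}{q}\sim-\aqprod{q}{q}{\infty}^{2}(w-1)$ near $w=1$, a finite residue computation shows (i) that the \emph{a priori} poles of $R$ at the cube-root loci ($x^{3}\in q^{\mathbb{Z}}$, $x\notin q^{\mathbb{Z}}$) cancel between the two product terms, and (ii) that at the remaining loci ($x^{2}\in q^{\mathbb{Z}}$) the principal parts of $R$ match those of $L$. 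Thus $D$ is holomorphic and $q$-periodic, so $D=c(q)$, a function of $q$ alone. Finally, $L$ and $R$ both satisfy $L(x)+L(1/x)=6$ and $R(x)+R(1/x)=6$: for $L$ this is $g(z,q)+g(1/z,q)=3$, a consequence of the two $g$-relations in Lemma~\ref{LemmaGShifts}; for $R$ it comes from $\jacprod{1/w}{q}=-w^{-1}\jacprod{w}{q}$, under which each of the two product terms changes sign. Hence $c(q)=-c(q)$, i.e.\ $c(q)=0$ and $L\equiv R$.

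I expect the real obstacle to be step (i): verifying that the spurious poles of the two theta quotients at the cube-root loci cancel; the rest is bookkeeping. An alternative that sidesteps the torus argument is to apply Chan's Lemma~\ref{ChanLemma1} directly, choosing the parameters $b_{i}$ (and letting some of them coalesce) so that its right-hand side collapses to $4g(z,q)-2g(z^{2},q)$ while the factors $1/\jacprod{b_{j}/b_{i}}{q}$ produce the stated theta quotients — this is exactly the mechanism by which $g(z,q)$ is extracted from Lemma~\ref{ChanLemma1}, and by which Lemma~\ref{ChanLemma1} is reused inside the proof of Lemma~\ref{LemmaFor6b1} — and then to clear the residual $S$-series terms with \eqref{SigmaProperty1}--\eqref{SigmaProperty3} and Lemma~\ref{LemmaGShifts}. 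In either approach the $h$-identity is then immediate.
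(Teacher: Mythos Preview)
Your approach is correct and structurally the same as the paper's: both argue by showing the difference of the two sides is an elliptic (i.e.\ $q$-periodic) function with no poles, hence constant, and then pin down that constant. The paper works with the $h$-identity and deduces $g$, while you go the other way, but since $h(z,q)=g(z^{1/2},q)-1$ this is cosmetic. Your residue bookkeeping for the pole cancellation is only sketched, but it is exactly what the paper does in detail (computing residues at $z=1,-1,q^{1/2},-q^{1/2}$ and observing the cube-root poles cancel between the two product terms), and your worry about step (i) is well placed but ultimately routine.

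Where you genuinely diverge is in the last step. The paper kills the constant by exhibiting an explicit zero: it shows, via a page-long limit and logarithmic-derivative computation, that $F(q^{1/3})=0$. You instead observe the inversion symmetry $L(x)+L(1/x)=6$ and $R(x)+R(1/x)=6$ (the former from $g(z,q)+g(1/z,q)=3$, the latter because each theta quotient is odd under $x\mapsto 1/x$), whence the constant $c(q)$ satisfies $2c(q)=0$. This is a real simplification over the paper's argument and avoids the explicit evaluation at $q^{1/3}$ entirely; it is worth noting as an alternative. Your suggested ``alternative via Lemma~\ref{ChanLemma1}'' is less promising as stated, since the coalescence of parameters there typically leaves residual $S$-series that do not obviously collapse; the elliptic-function route you actually carry out is the cleaner one.
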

\begin{proof}
We need only prove the formula for $h(z,q)$, the formula for $g(z,q)$ then
follows.

By Lemma \ref{LemmaGShifts} we find that $4h(z,q) - 2h(z^2,q)$ is
invariant under $z\mapsto zq$, as are the products
$\frac{z\aqprod{q}{q}{\infty}^2\jacprod{z,z^4}{q}}
{\jacprod{z^2}{q}^2\jacprod{z^3}{q}}$
and
$\frac{\aqprod{q}{q}{\infty}^2\jacprod{z^2}{q}^3}
{\jacprod{z}{q}^3\jacprod{z^3}{q}}$.
We define
\begin{align*}
	F(z) := F(z,q)
	&=
	4h(z,q) - 2h(z^2,q)
	+\frac{z\aqprod{q}{q}{\infty}^2\jacprod{z,z^4}{q}}
		{\jacprod{z^2}{q}^2\jacprod{z^3}{q}}
	+\frac{\aqprod{q}{q}{\infty}^2\jacprod{z^2}{q}^3}
		{\jacprod{z}{q}^3\jacprod{z^3}{q}}
	-1
.
\end{align*}
By Lemma $2$ of \cite{AS}, if $F(z)$ is not identically zero, then $F$ has
exactly as many poles and zeros in the region $|q|<|z|\le 1$. Our proof is then
to show all the poles cancel out and find a single zero of $F(z)$.

First we show $q^{1/3}$ is a zero of $F$, to make the calculations cleaner
we use $q\mapsto q^3$ and $z=q$. 
By Lemma \ref{LemmaLogThetaForGAndG*} we have
\begin{align*}
	4h(q,q^3) - 2h(q^2,q^3)
	&=
	-6\sum_{n=0}^\infty \frac{q^{3n+1}}{1-q^{3n+1}}
	+6\sum_{n=0}^\infty \frac{q^{3n+2}}{1-q^{3n+2}}
.
\end{align*}

For the products we have
\begin{align*}
	&\lim_{z\rightarrow q}
	\left.
	\Parans{
		\frac{z\aqprod{q^3}{q^3}{\infty}^2 \jacprod{z,z^4}{q^3}}
			{\jacprod{z^2}{q^3}^2\aqprod{z^3,q^{6}/z^3}{q^3}{\infty}}
		+\frac{\aqprod{q^3}{q^3}{\infty}^2 \jacprod{z^2}{q^3}^3}
			{\jacprod{z}{q^3}^3\aqprod{z^3,q^{6}/z^3}{q^3}{\infty}}
	}\right/ \Parans{1-q^3/z^3}
	\\
	&=
	\frac{q}{3}
	\lim_{z\rightarrow q}
	\frac{\partial}{\partial z}\Parans{
		\frac{z\aqprod{q^3}{q^3}{\infty}^2 \jacprod{z,z^4}{q^3}}
			{\jacprod{z^2}{q^3}^2\aqprod{z^3,q^{6}/z^3}{q^3}{\infty}}
		+\frac{\aqprod{q^3}{q^3}{\infty}^2 \jacprod{z^2}{q^3}^3}
			{\jacprod{z}{q^3}^3\aqprod{z^3,q^{6}/z^3}{q^3}{\infty}}
	}	
.
\end{align*}
We note that
\begin{align*}
	\lim_{z\rightarrow q}
	\frac{z\aqprod{q^3}{q^3}{\infty}^2 \jacprod{z,z^4}{q^3}}
		{\jacprod{z^2}{q^3}^2\aqprod{z^3,q^{6}/z^3}{q^3}{\infty}}
	&= -1
	,
	&\lim_{z\rightarrow q}
	\frac{\aqprod{q^3}{q^3}{\infty}^2 \jacprod{z^2}{q^3}^3}
		{\jacprod{z}{q^3}^3\aqprod{z^3,q^{6}/z^3}{q^3}{\infty}}
	&= 1
,
\end{align*}
so logarithmic differentiation yields
\begin{align*}
	&q\lim_{z\rightarrow q}
	\frac{\partial}{\partial z}
		\frac{z\aqprod{q^3}{q^3}{\infty}^2 \jacprod{z,z^4}{q^3}}
			{\jacprod{z^2}{q^3}^2\aqprod{z^3,q^{6}/z^3}{q^3}{\infty}}
	\\
	&= 
	-1
	+\sum_{n=0}^\infty \frac{q^{3n+1}}{1-q^{3n+1}}
	-\sum_{n=1}^\infty \frac{q^{3n-1}}{1-q^{3n-1}}
	+4\sum_{n=0}^\infty \frac{q^{3n+4}}{1-q^{3n+4}}
	-4\sum_{n=1}^\infty \frac{q^{3n-4}}{1-q^{3n-4}}
		\\&\quad
		-4\sum_{n=0}^\infty \frac{q^{3n+2}}{1-q^{3n+2}}
		+4\sum_{n=1}^\infty \frac{q^{3n-2}}{1-q^{3n-2}}
		-3\sum_{n=0}^\infty \frac{q^{3n+3}}{1-q^{3n+3}}
		+3\sum_{n=2}^\infty \frac{q^{3n-3}}{1-q^{3n-3}}
	\\
	&=
	3 + 6\sum_{n=0}^\infty \frac{q^{3n+1}}{1-q^{3n+1}}
	-6\sum_{n=0}^\infty \frac{q^{3n+2}}{1-q^{3n+2}}
\end{align*}
and similarly
\begin{align*}
	q\lim_{z\rightarrow q}
	\frac{\partial}{\partial z}\Parans{
		\frac{\aqprod{q^3}{q^3}{\infty}^2 \jacprod{z^2}{q^3}^3}
			{\jacprod{z}{q^3}^3\aqprod{z^3,q^{6}/z^3}{q^3}{\infty}}
	}	
	&=
	9\sum_{n=0}^\infty \frac{q^{3n+1}}{1-q^{3n+1}}
	-9\sum_{n=0}^\infty \frac{q^{3n+2}}{1-q^{3n+2}}
	.
\end{align*}
We then have
\begin{align*}
	\lim_{z\rightarrow q}
		\frac{z\aqprod{q^3}{q^3}{\infty}^2 \jacprod{z,z^4}{q^3}}
			{\jacprod{z^2}{q^3}^2\jacprod{z^3}{q^3}}
		+\frac{\aqprod{q^3}{q^3}{\infty}^2 \jacprod{z^2}{q^3}^3}
			{\jacprod{z}{q^3}^3\jacprod{z^3}{q^3}}
	&=
	1 + 6\sum_{n=0}^\infty \frac{q^{3n+1}}{1-q^{3n+1}} 
	-6\sum_{n=0}^\infty \frac{q^{3n+2}}{1-q^{3n+2}}
,
\end{align*}
and so $q^{1/3}$ is a zero of $F(z)$.

We see $F(z)$ has at worst simple poles when $z$, $z^2$, or $z^3$ is an 
integral power of $q$.
Elementary manipulations show that the poles between the two products cancel 
out for $z^3 = q$ and $q^2$ and for $z$ a primitive third root of unity. Thus we need only compute residues for
$z=1,-1,q^{1/2},$ and $-q^{1/2}$. For $h(z,q)$ these residues are
$1,0,0,$ and $0$; for $h(z^2,q)$ these residues are
$\frac{1}{2},-\frac{1}{2},\frac{q^{1/2}}{2},$ and $-\frac{q^{1/2}}{2}$; for 
$\frac{z\aqprod{q}{q}{\infty}^2\jacprod{z,z^4}{q}}
{\jacprod{z^2}{q}^2\jacprod{z^3}{q}}$
these residues are $-\frac{1}{3}, -1, q^{1/2}$ and $-q^{1/2}$;
and for 
$\frac{\aqprod{q}{q}{\infty}^2\jacprod{z^2}{q}^3}
{\jacprod{z}{q}^3\jacprod{z^3}{q}}$
these residues are $-\frac{8}{3}, 0, 0$ and $0$.
Thus at all four points the residues cancel out in $F(z)$ so that
$F(z)$ has no poles in $|q|<|z|\le 1$. Thus $F$ is identically zero and
the theorem holds.
\end{proof}

We can use Lemma \ref{LemmaGToProducts} to expand $g(q^{\ell a},q^{3\ell^2})$ 
into products as follows. Suppose $n$ is a positive integer with 
$2^n\equiv 1\pmod{3\ell}$ and
$2^n-1=b3\ell$. Then by applying Lemma \ref{LemmaGShifts} $ba$ times,with
$q\mapsto q^{3\ell^2}$ and $z = q^{\ell a}$, we have
\begin{align*}
	g(q^{\ell a};q^{3\ell^2})
	&=
	g(q^{\ell a+3\ell^2ba};q^{3\ell^2}) + 2ba
,
\end{align*}
and so
\begin{align}\label{EqGToProducts}
	\nonumber
	&(2^{n+1}-2)g(q^{\ell a};q^{3\ell^2})
	=
	2^{n+1}g(q^{\ell a};q^{3\ell^2})
	-2g(q^{\ell a(1+3\ell b)};q^{3\ell^2}) - 4ba
	\\\nonumber
	&=
	2^{n+1}g(q^{\ell a};q^{3\ell^2})
	-2g(q^{\ell a2^n};q^{3\ell^2}) - 4ba	
	\\\nonumber
	&=
	-4ba + \sum_{k=0}^{n-1}2^{n-k-1}\left(
		4g(q^{\ell a2^k};q^{3\ell^2})-2g(q^{\ell a2^{k+1}};q^{3\ell^2})
		\right)
	\\\nonumber
	&=
	-4ba + \sum_{k=0}^{n-1}2^{n-k-1}\left(
		3
		-\frac{ q^{\ell a2^{k+1}} \aqprod{q^{3\ell^2}}{q^{3\ell^2}}{\infty}^2
				\jacprod{ q^{\ell a2^{k+1}}, q^{\ell a2^{k+3}}}{q^{3\ell^2}}  }   
		{ \jacprod{q^{\ell a2^{k+2}}}{q^{3\ell^2}}^2  \jacprod{q^{3\ell a2^{k+1}}}{q^{3\ell^2}}}
		\right.\\&\quad\left.
		-\frac{\aqprod{q^{3\ell^2}}{q^{3\ell^2}}{\infty}^2 \jacprod{q^{\ell a2^{k+2}}}{q^{3\ell^2}}^3}
		{\jacprod{q^{\ell a2^{k+1}}}{q^{3\ell^2}}^3\jacprod{q^{3\ell a2^{k+1}}}{q^{3\ell^2}}}
	\right)
.
\end{align}
Similarly we have
\begin{align}
	\label{EqHToProducts}
	\nonumber
	&(2^{n+1}-2)h(q^{\ell a};q^{3\ell^2})
	\\
	&=
	-2ba + \sum_{k=0}^{n-1}2^{n-k-1}\left(
		1
		-\frac{ q^{\ell a2^{k}} \aqprod{q^{3\ell^2}}{q^{3\ell^2}}{\infty}^2
				\jacprod{ q^{\ell a2^{k}}, q^{\ell a2^{k+2}}}{q^{3\ell^2}}  }   
		{ \jacprod{q^{\ell a2^{k+1}}}{q^{3\ell^2}}^2  \jacprod{q^{3\ell a2^{k}}}{q^{3\ell^2}}}
		-\frac{\aqprod{q^{3\ell^2}}{q^{3\ell^2}}{\infty}^2 \jacprod{q^{\ell a2^{k+1}}}{q^{3\ell^2}}^3}
		{\jacprod{q^{\ell a2^{k}}}{q^{3\ell^2}}^3\jacprod{q^{3\ell a2^{k}}}{q^{3\ell^2}}}
	\right)
.
\end{align}

As an example, these identities give
\begin{align*}
	h(q^5,q^{75})
	=
	g(q^{40},q^{75})
	&=	
	\frac{13}{30} 
	-\frac{4}{15}\frac{q^{5}\aqprod{q^{75}}{q^{75}}{\infty}^2\jacprod{q^{5},q^{20}}{q^{75}}}
		{\jacprod{q^{10}}{q^{75}}^2\jacprod{q^{15}}{q^{75}}}
	-\frac{4}{15}\frac{\aqprod{q^{75}}{q^{75}}{\infty}^2\jacprod{q^{10}}{q^{75}}^3}
		{\jacprod{q^{5}}{q^{75}}^3\jacprod{q^{15}}{q^{75}}} 
	\\&\quad
	-\frac{2}{15}\frac{q^{10}\aqprod{q^{75}}{q^{75}}{\infty}^2\jacprod{q^{10},q^{35}}{q^{75}}}
		{\jacprod{q^{20}}{q^{75}}^2\jacprod{q^{30}}{q^{75}}}
	-\frac{2}{15}\frac{\aqprod{q^{75}}{q^{75}}{\infty}^2\jacprod{q^{20}}{q^{75}}^3}
		{\jacprod{q^{10}}{q^{75}}^3\jacprod{q^{30}}{q^{75}}}
	\\&\quad
	+\frac{1}{15}\frac{q^{15}\aqprod{q^{75}}{q^{75}}{\infty}^2\jacprod{q^{5},q^{20}}{q^{75}}}
		{\jacprod{q^{15}}{q^{75}}\jacprod{q^{35}}{q^{75}}^2}
	-\frac{1}{15}\frac{\aqprod{q^{75}}{q^{75}}{\infty}^2\jacprod{q^{35}}{q^{75}}^3}
		{\jacprod{q^{15}}{q^{75}}\jacprod{q^{20}}{q^{75}}^3} 
	\\&\quad
	+\frac{1}{30}\frac{\aqprod{q^{75}}{q^{75}}{\infty}^2\jacprod{q^{10},q^{35}}{q^{75}}}
		{\jacprod{q^{5}}{q^{75}}^2\jacprod{q^{30}}{q^{75}}}
	-\frac{1}{30}\frac{q^{30}\aqprod{q^{75}}{q^{75}}{\infty}^2\jacprod{q^{5}}{q^{75}}^3}
		{\jacprod{q^{30}}{q^{75}}\jacprod{q^{35}}{q^{75}}^3}
.
\end{align*}

We can now describe the method used to prove our Theorems. For 
$PP_1$ and $PP_2$ we use Lemmas \ref{LemmaFor6b1} and \ref{LemmaFor6b4}
to write the series in terms of $g(z,q)$, two $\SSeries{z}{w}{q}$ series,
and a sum of products. The two $\SSeries{z}{w}{q}$ series have a factor
of $\aqprod{q}{q}{\infty}$ out front and so they are immediately dealt with.
We can expand $g(z,q)$ into a sum of products, so we only have an identity
between products to verify. For $PP_3$ and $PP_4$ we instead use Lemma 
\ref{LemmaFor3b2} and so there are only products.
While
these identities can be split into several smaller identities, some of which
follow from simple rearrangements, there is always at least one identity that
is not easily proved by hand. For this reason we just prove the single larger
equality between infinite products by recognizing them as modular functions. 
In particular, the eta function is defined by
\begin{align*}
	\eta(\tau) &= q^{1/24}\aqprod{q}{q}{\infty},
\end{align*}
and the generalized eta function is defined by
\begin{align*}
	\GEta{\delta}{g}{\tau} 
	&= 
	q^{P(g/\delta)\delta/2 }
	\prod_{\substack{n>0\\n\equiv g \pmod{d}}} (1-q^n)
	\prod_{\substack{n>0\\n\equiv -g \pmod{d}}} (1-q^n)
,
\end{align*}
where $q = e^{2\pi i \tau}$ and
$P(t) = \CBrackets{t}^2-\CBrackets{t}+\frac{1}{6}$. 
So $\GEta{\delta}{0}{\tau}=\eta(\delta\tau)^2$ and 
$\GEta{\delta}{g}{\tau} = q^{P(g/\delta)\delta/2 }\jacprod{q^g}{q^{\delta}}$
for $0<g<\delta$.
We use Theorem 3 of
\cite{Robins} to determine when a quotient of $\GEta{\delta}{g}{\tau}$
is a modular function with respect to a congruence subgroup 
$\Gamma_1(N)$ and use Theorem 4 of \cite{Robins} to determine the order
at the cusps.

We recall some facts about modular functions as in \cite{Rankin} and use 
the notation in \cite{Berndt}. 
Suppose $f$ is a modular function with respect to the congruence subgroup $\Gamma$ 
of $\Gamma_0(1)$. For $A\in\Gamma_0(1)$ we have a cusp given 
by $\zeta=A^{-1}\infty$. The width of the cusp $N:=N(\Gamma,\zeta)$ is
given by
\begin{align*}
	N(\Gamma,\zeta) &= \min\{ k>0:\pm A^{-1}T^kA\in\Gamma \},
\end{align*}
where $T$ is the translation matrix 
\begin{align*}
	T &= {\Parans{\begin{array}{cc}
				1&1\\
				0&1
			\end{array}}}
.
\end{align*}

If
\begin{align*}
	f(A^{-1}\tau) &= \sum_{m=m_0}^\infty b_mq^{m/N}
\end{align*}
and $b_{m_0}\not=0$, then we say $m_0$ is the order of $f$ at
$\zeta$ with respect to $\Gamma$ and we denote this value 
by $Ord_\Gamma(f;\zeta)$. By
$ord(f;\zeta)$ we mean the invariant order of $f$ at $\zeta$
given by
\begin{align*}
	ord(f;\zeta) = \frac{Ord_\Gamma(f;\zeta)}{N}.
\end{align*}

For $z$ in the upper half plane $\mathcal{H}$, we write 
$ord(f;z)$ for the order of $f$ at $z$ as an analytic function
in $z$. We define the order of $f$ at $z$ with respect to
$\Gamma$ by
\begin{align*}
	Ord_\Gamma(f;z) = \frac{ord(f;z)}{m},
\end{align*}
where $m$ is the order of $z$ as a fixed point of $\Gamma$.

The valence formula for modular functions is as follows.
Suppose a subset $\mathcal{F}$ of 
$\mathcal{H}\cup\{\infty\}\cup\mathbb{Q}$ is a fundamental
region for the action of $\Gamma$ along with a complete set of
inequivalent cusps, if $f$ is not the zero 
function then
\begin{align}
	\sum_{z\in\mathcal{F}}Ord_\Gamma(f;z) &=0.
\end{align}

We can verify an identity between sums of generalized eta quotients as follows.
Suppose we are to show
\begin{align*}
	a_1f_1 + a_2f_2 + \dots + a_kf_k &=	a_{k+1}f_{k+1} + a_{k+2}f_{k+2} + \dots + a_{k+m}f_{k+m}
,\end{align*}
where each $a_i\in\mathbb{C}$ and each $f_i$ is of the form
\begin{align*}
	f_i &= \prod_{j=1}^{m}  \GEta{\delta_j}{g_j}{\tau}^{r_j}   
.
\end{align*}
We verify each $f_i$ is a modular function with respect to a common
$\Gamma_1(N)$, so that $f = a_1f_1+\dots+a_kf_k - a_{k+1}f_{k+1}-\dots-a_{k+m}f_{k+m}$ 
is a modular function
with respect to $\Gamma_1(N)$. Although $f$ may have zeros at points other than
the cusps, the poles must occur only at the cusps.
At each cusp $\zeta$, not equivalent to $\infty$, we compute a lower bound
for $Ord_\Gamma(f;\zeta)$ by taking the minimum of the $Ord_\Gamma(f_i,\zeta)$
, we call this lower bound $B_\zeta$. We then use the $q$-expansion
of $f$ to find $Ord_\Gamma(f;\infty)$ is larger than 
$-\sum_{\zeta\in\mathcal{C}'} B_\zeta$, where $\mathcal{C}'$ is a set 
of cusps with a representative of each cusp not equivalent to $\infty$. By the
valence formula we have $f\equiv 0$ since 
$\sum_{z\in\mathcal{F}}Ord_\Gamma(f;z) >0$.

We cannot expand $g(z,q)$ into products when $z^3$ is a power of
$q$, however these terms can still be viewed as modular forms. 
We let $\chi(n) = \Jac{-3}{n}$ and set
\begin{align*}
	V_{\chi,1}(\tau) 
	&= 
	\frac{1}{6} + \sum_{m=1}^\infty\sum_{n=1}^\infty\chi(n)q^{mn}
	\\
	&= 
	\frac{1}{6} 
	+ \sum_{n=1}^\infty \frac{\chi(n)q^{n}}{1-q^n}
	\\
	&= 
	\frac{1}{6} 
	+ \sum_{n=0}^\infty \frac{q^{3n+1}}{1-q^{3n+1}}
	- \sum_{n=0}^\infty \frac{q^{3n+2}}{1-q^{3n+2}}
	\\
	&=
 	g(q,q^3)-\frac{5}{6}
.
\end{align*}
By \cite{Kolberg} this is a weight $1$ modular form with respect to $\Gamma_0(3)$
and character $\chi$. We note we also have
$V_{\chi,1}(\tau) = -g(q^2,q^3)+\frac{1}{6} = h(q^2,q^3)+\frac{1}{6}$.


\section{Proof of Theorem \ref{TheoremPP13} }
We note $V_3^6(2)=-V_3^6(7)$ and $V_3^6(5)=-V_3^6(4)$ and so we set
\begin{align*}
	A &= \frac{1}{3} + V_3^6(4) - q^2U_3^6(8) - V_3^6(7) + q^3U_3^6(11)
.
\end{align*}
By (\ref{PP1Zeta3}) the left hand side of Theorem \ref{TheoremPP13}
is $\frac{A}{\aqprod{q}{q}{\infty}}$,
so we must show
\begin{align}\label{EqThingToProve}
	\nonumber
	\frac{A}{\aqprod{q}{q}{\infty}}
	&=
	\frac{\aqprod{q^{27}}{q^{27}}{\infty}^2}{\aqprod{q^3}{q^3}{\infty}}
	\left(
		\frac{1}{3}\jacprod{q^{12}}{q^{27}}^2 
		+\frac{2}{3}q^3\jacprod{q^3,q^6}{q^{27}} 	
		+\frac{1}{3}q\jacprod{q^6,q^{12}}{q^{27}} 
		+\frac{1}{3} q^4\jacprod{q^3}{q^{27}}^2
		\right.\\\nonumber&\left.\quad
		+\frac{1}{3}q^2\jacprod{q^3,q^{12}}{q^{27}} 
		-\frac{2}{3}q^2\jacprod{q^6}{q^{27}}^2
	\right)
	\\\nonumber&\quad
	-\frac{q^{8}}{\aqprod{q^{27}}{q^{27}}{\infty}\jacprod{q^{3}}{q^{27}}}
		\left(\SSeries{9}{-6}{27}-q^{6}\SSeries{12}{6}{27}\right)
	\\&\quad
	+\frac{q^{4}}{\aqprod{q^{27}}{q^{27}}{\infty}\jacprod{q^{12}}{q^{27}}}
		\left(\SSeries{-9}{-84}{27}-q^{84}\SSeries{33}{84}{27}\right)
.
\end{align}

But by Lemmas \ref{LemmaFor6b1} and \ref{LemmaFor6b4}
\begin{align*}
	A
	&=
	\frac{1}{3} + g(q^{21},q^{27})  +g(q^{24},q^{27})
	-q^{2}\frac{\aqprod{q^{27}}{q^{27}}{\infty}^2\jacprod{q^{6}}{q^{27}}}
		{\jacprod{q^{3},q^{9}}{q^{27}}}
	+q^{6}\frac{\aqprod{q^{27}}{q^{27}}{\infty}^2\jacprod{q^{3},q^{6}}{q^{27}}}
		{\jacprod{q^{9},q^{12},q^{12}}{q^{27}}}
	\\&\quad
	-q^{5}\frac{\aqprod{q^{27}}{q^{27}}{\infty}^2\jacprod{q^{3}}{q^{27}}}
		{\jacprod{q^{9},q^{12}}{q^{27}}}
	+q^{3}\frac{\aqprod{q^{27}}{q^{27}}{\infty}^2\jacprod{q^{3},q^{12}}{q^{27}}}
		{\jacprod{q^{6},q^{6},q^{9}}{q^{27}}}
	\\&\quad
	-q^{8}\frac{\aqprod{q}{q}{\infty}}{\aqprod{q^{27}}{q^{27}}{\infty}\jacprod{q^{3}}{q^{27}}}
		\left(\SSeries{9}{-6}{27}-q^{6}\SSeries{12}{6}{27}\right)
	\\&\quad
	+q^{4}\frac{\aqprod{q}{q}{\infty}}{\aqprod{q^{27}}{q^{27}}{\infty}\jacprod{q^{12}}{q^{27}}}
		\left(\SSeries{-9}{-84}{27}-q^{84}\SSeries{33}{84}{27}\right)
.
\end{align*}

By Lemma \ref{CorollaryGProducts} we have
\begin{align*}
	1+3g(q^{24},q^{27})+3g(q^{21},q^{27})
	&=
	6 - 2g(q^3,q^{27}) - 3g(q^{6},q^{27}) + g(q^{24},q^{27})	
	\\
	&=
	3 - 2g(q^3,q^{27}) - g(q^{6},q^{27}) + g(q^{12},q^{27})
		\\&\quad
		+3 - 2g(q^6,q^{27}) - g(q^{12},q^{27}) + g(q^{24},q^{27})	
	\\
	&=
	\frac{\aqprod{q^{27}}{q^{27}}{\infty}^2\jacprod{q^{9}}{q^{27}}^2}
	{\jacprod{q^6}{q^{27}}^2\jacprod{q^{3}}{q^{27}}}
	-
	q^{3}\frac{\aqprod{q^{27}}{q^{27}}{\infty}^2\jacprod{q^9}{q^{27}}^2}
	{\jacprod{q^{12}}{q^{27}}^2\jacprod{q^{6}}{q^{27}}}
.
\end{align*}

Noting the terms involving $\SSeries{9}{-6}{27}-q^{6}\SSeries{12}{6}{27}$
and $\SSeries{-9}{-84}{27}-q^{84}\SSeries{33}{84}{27}$ work out as claimed
in (\ref{EqThingToProve}),
by combining the products we find it only remains to show that
\begin{align}\label{EqPP1Mod3Eq1}
	&\quad
	-q^{2}\frac{\aqprod{q^{27}}{q^{27}}{\infty}^2\jacprod{q^{6}}{q^{27}}}
		{\jacprod{q^{3},q^{9}}{q^{27}}}
	+q^{6}\frac{\aqprod{q^{27}}{q^{27}}{\infty}^2\jacprod{q^{3},q^{6}}{q^{27}}}
		{\jacprod{q^{9},q^{12},q^{12}}{q^{27}}}
	-q^{5}\frac{\aqprod{q^{27}}{q^{27}}{\infty}^2\jacprod{q^{3}}{q^{27}}}
		{\jacprod{q^{9},q^{12}}{q^{27}}}
	\nonumber\\&\quad
	+q^{3}\frac{\aqprod{q^{27}}{q^{27}}{\infty}^2\jacprod{q^{3},q^{12}}{q^{27}}}
		{\jacprod{q^{6},q^{6},q^{9}}{q^{27}}}
	+\frac{1}{3}\frac{\aqprod{q^{27}}{q^{27}}{\infty}^2\jacprod{q^{9}}{q^{27}}^2}
		{\jacprod{q^6}{q^{27}}^2\jacprod{q^{3}}{q^{27}}}
	-
	\frac{1}{3}q^{3}\frac{\aqprod{q^{27}}{q^{27}}{\infty}^2\jacprod{q^9}{q^{27}}^2}
		{\jacprod{q^{12}}{q^{27}}^2\jacprod{q^{6}}{q^{27}}}
	\nonumber\\
	&=
	\frac{\aqprod{q}{q}{\infty}\aqprod{q^{27}}{q^{27}}{\infty}^2}{\aqprod{q^3}{q^3}{\infty}}
	\left(
		\frac{1}{3}\jacprod{q^{12}}{q^{27}}^2 
		+\frac{2}{3}q^3\jacprod{q^3,q^6}{q^{27}} 	
		+\frac{1}{3}q\jacprod{q^6,q^{12}}{q^{27}} 
		+\frac{1}{3} q^4\jacprod{q^3}{q^{27}}^2
		\right.\nonumber\\&\left.\quad
		+\frac{1}{3}q^2\jacprod{q^3,q^{12}}{q^{27}} 
		-\frac{2}{3}q^2\jacprod{q^6}{q^{27}}^2
	\right)
.
\end{align}

If we multiply both sides of (\ref{EqPP1Mod3Eq1}) by 
$q^{-2}\frac{\aqprod{q}{q}{\infty}\jacprod{q^3,q^9}{q^{27}}}{\jacprod{q^6}{q^{27}}}$
we find it is equivalent to
\begin{align*}
	&
	-1
	+\frac{\GEta{27}{3}{\tau}^{2}}
		{\GEta{27}{12}{\tau}^{2}}
	-\frac{\GEta{27}{3}{\tau}^{2}}
		{\GEta{27}{6}{\tau}\GEta{27}{12}{\tau}}
	+\frac{\GEta{27}{3}{\tau}^{2}\GEta{27}{12}{\tau}}
		{\GEta{27}{6}{\tau}^{3}}
	+\frac{1}{3}\frac{\GEta{27}{9}{\tau}^{3}}
		{\GEta{27}{6}{\tau}^{3}}
	-\frac{1}{3}\frac{\GEta{27}{3}{\tau}\GEta{27}{9}{\tau}^{3}}
		{\GEta{27}{6}{\tau}^{2}\GEta{27}{12}{\tau}^{2}}
	\\
	&=
	\frac{1}{3}\frac{\GEta{3}{1}{\tau}\GEta{27}{3}{\tau}\GEta{27}{9}{\tau}\GEta{27}{12}{\tau}^{2}}
		{\GEta{27}{6}{\tau}}
	+\frac{2}{3}\GEta{3}{1}{\tau}\GEta{27}{3}{\tau}^{2}\GEta{27}{9}{\tau}
	+\frac{1}{3}\GEta{3}{1}{\tau}\GEta{27}{3}{\tau}\GEta{27}{9}{\tau}\GEta{27}{12}{\tau}
	\\&\quad
	+\frac{1}{3}\frac{\GEta{3}{1}{\tau}\GEta{27}{3}{\tau}^{3}\GEta{27}{9}{\tau}}
		{\GEta{27}{6}{\tau}}
	+\frac{1}{3}\frac{\GEta{3}{1}{\tau}\GEta{27}{3}{\tau}^{2}\GEta{27}{9}{\tau}\GEta{27}{12}{\tau}}
		{\GEta{27}{6}{\tau}}
	\\&\quad
	-\frac{2}{3}\GEta{3}{1}{\tau}\GEta{27}{3}{\tau}\GEta{27}{6}{\tau}\GEta{27}{9}{\tau}
.
\end{align*}
Each term is a modular function with respect to $\Gamma_1(27)$ and by the 
reasoning explained in Section 2 by valence formula we need only verify the 
identity in the $q$-expansion past $q^{25}$. Thus Theorem \ref{TheoremPP13} holds.


\section{Proof of Theorem \ref{TheoremPP23} }

We set
\begin{align*}
	A &= \frac{1}{3} + V_3^6(1) -qU_3^6(5) -V_3^6(4) +q^2U_2^6(8)
.
\end{align*}
By (\ref{PP2Zeta3}) the left hand side of Theorem \ref{TheoremPP23}
is $\frac{A}{\aqprod{q}{q}{\infty}}$,
so we must show
\begin{align}\label{EqPP2Mod3Eq1}
	\frac{A}{\aqprod{q}{q}{\infty}}
	&=
	\frac{\aqprod{q^{27}}{q^{27}}{\infty}^2}{\aqprod{q^3}{q^3}{\infty}}
	\left(
		-\frac{2}{3}\jacprod{q^{12}}{q^{27}}^2 
		-\frac{1}{3}q^3\jacprod{q^3,q^6}{q^{27}} 	
		-\frac{2}{3}q\jacprod{q^6,q^{12}}{q^{27}} 
		+\frac{1}{3} q^4\jacprod{q^3}{q^{27}}^2
		\right.\nonumber\\&\left.\quad
		+\frac{1}{3}q^2\jacprod{q^3,q^{12}}{q^{27}} 
		+\frac{1}{3}q^2\jacprod{q^6}{q^{27}}^2
	\right)
	\nonumber\\&\quad
	+\frac{q^{-3}}{\aqprod{q^{27}}{q^{27}}{\infty}\jacprod{q^{6}}{q^{27}}}
		\left(\SSeries{-9}{-66}{27}-q^{66}\SSeries{24}{66}{27}\right)
	\nonumber\\&\quad
	+\frac{q^{8}}{\aqprod{q^{27}}{q^{27}}{\infty}\jacprod{q^{3}}{q^{27}}}
		\left(\SSeries{9}{-6}{27}-q^{6}\SSeries{12}{6}{27}\right)
.
\end{align}

By Lemmas \ref{LemmaFor6b1} and \ref{LemmaFor6b4}
\begin{align*}
	A
	&=
	\frac{1}{3} - g(q^{15},q^{27}) - g(q^{21},q^{27})
	+q^{2}\frac{\aqprod{q^{27}}{q^{27}}{\infty}^2\jacprod{q^{12}}{q^{27}}}
		{\jacprod{q^{6},q^{9}}{q^{27}}}
	-\frac{\aqprod{q^{27}}{q^{27}}{\infty}^2\jacprod{q^{6},q^{12}}{q^{27}}}
		{\jacprod{q^{3},q^{3},q^{9}}{q^{27}}}
	\\&\quad
	+q^{2}\frac{\aqprod{q^{27}}{q^{27}}{\infty}^2\jacprod{q^{6}}{q^{27}}}
		{\jacprod{q^{3},q^{9}}{q^{27}}}
	-q^{6}\frac{\aqprod{q^{27}}{q^{27}}{\infty}^2\jacprod{q^{3},q^{6}}{q^{27}}}
		{\jacprod{q^{9},q^{12},q^{12}}{q^{27}}}
	\\&\quad
	+q^{-3}\frac{\aqprod{q}{q}{\infty}}{\aqprod{q^{27}}{q^{27}}{\infty}\jacprod{q^{6}}{q^{27}}}
		\left(\SSeries{-9}{-66}{27}-q^{66}\SSeries{24}{66}{27}\right)
	\\&\quad
	+q^{8}\frac{\aqprod{q}{q}{\infty}}{\aqprod{q^{27}}{q^{27}}{\infty}\jacprod{q^{3}}{q^{27}}}
		\left(\SSeries{9}{-6}{27}-q^{6}\SSeries{12}{6}{27}\right)
.
\end{align*}

By Lemma \ref{CorollaryGProducts} we have
\begin{align*}
	1-3g(q^{24},q^{27})-3g(q^{21},q^{27})
	&=
	-3 + 2g(q^6,q^{27}) + g(q^{12},q^{27}) - g(q^{24},q^{27})
		\\&\quad
		-3 + 2g(q^{12}q^{27}) + g(q^{24},q^{27}) - g(q^{48},q^{27})	
	\\
	&=
	q^{3}\frac{\aqprod{q^{27}}{q^{27}}{\infty}^2\jacprod{q^9}{q^{27}}^2}
	{\jacprod{q^{12}}{q^{27}}^2\jacprod{q^{6}}{q^{27}}}
	+
	\frac{\aqprod{q^{27}}{q^{27}}{\infty}^2\jacprod{q^{9}}{q^{27}}^2}
	{\jacprod{q^3}{q^{27}}^2\jacprod{q^{12}}{q^{27}}}
.
\end{align*}

We now have only an identity to verify between products. Multiplying 
(\ref{EqPP2Mod3Eq1}) by
$q^{-2}\frac{\aqprod{q}{q}{\infty}\jacprod{q^6,q^9}{q^{27}}}
{\aqprod{q^{27}}{q^{27}}{\infty}^2\jacprod{q^{12}}{q^{27}}}
$
we find we must show
\begin{align*}
	&
	1
	-\frac{\GEta{27}{6}{\tau}^{2}}
		{\GEta{27}{3}{\tau}^{2}}
	+\frac{\GEta{27}{6}{\tau}^{2}}
		{\GEta{27}{3}{\tau}\GEta{27}{12}{\tau}}
	-\frac{\GEta{27}{3}{\tau}\GEta{27}{6}{\tau}^{2}}
		{\GEta{27}{12}{\tau}^{3}}
	+\frac{1}{3}\frac{\GEta{27}{9}{\tau}^{3}}
		{\GEta{27}{12}{\tau}^{3}}
	+\frac{1}{3}\frac{\GEta{27}{6}{\tau}\GEta{27}{9}{\tau}^{3}}
		{\GEta{27}{3}{\tau}^{2}\GEta{27}{12}{\tau}^{2}}
	\\
	&=
	-\frac{2}{3}\GEta{3}{1}{\tau}\GEta{27}{6}{\tau}\GEta{27}{9}{\tau}\GEta{27}{12}{\tau}
	-\frac{1}{3}\frac{\GEta{3}{1}{\tau}\GEta{27}{3}{\tau}\GEta{27}{6}{\tau}^{2}\GEta{27}{9}{\tau}}
		{\GEta{27}{12}{\tau}}
	-\frac{2}{3}\GEta{3}{1}{\tau}\GEta{27}{6}{\tau}^{2}\GEta{27}{9}{\tau}
	\\&\quad
	+\frac{1}{3}\frac{\GEta{3}{1}{\tau}\GEta{27}{3}{\tau}^{2}\GEta{27}{6}{\tau}\GEta{27}{9}{\tau}}
		{\GEta{27}{12}{\tau}}
	+\frac{1}{3}\GEta{3}{1}{\tau}\GEta{27}{3}{\tau}\GEta{27}{6}{\tau}\GEta{27}{9}{\tau}
	+\frac{1}{3}\frac{\GEta{3}{1}{\tau}\GEta{27}{6}{\tau}^{3}\GEta{27}{9}{\tau}}
		{\GEta{27}{12}{\tau}}
.
\end{align*}
Again each term is a modular function with respect to $\Gamma_1(27)$ and again
it is sufficient to verify the identity in the $q$-expansion past $q^{25}$.
Thus Theorem \ref{TheoremPP23} holds.


\section{Proof of Theorem \ref{TheoremPP25} }

First we set
\begin{align*}
	A 	&= \frac{3}{5} + V_5^6(1) - qU_5^6(5) - V_5^6(10) - q^4U_5^6(14)
	,\\
	B &= \frac{1}{5} + V_5^6(4) - q^2U_5^6(8) - V_5^6(7) + q^3U_5^7(11)
.
\end{align*}
By (\ref{PP2Zeta5}) the left hand side of Theorem \ref{TheoremPP25}
is $\frac{A+(\zeta_5+\zeta_5^4)B}{\aqprod{q}{q}{\infty}}$,
so we must show
\begin{align}\label{EqPP2Mod5Eq1}
	\frac{A}{\aqprod{q}{q}{\infty}}
	&=
	\aqprod{q^{25}}{q^{25}}{\infty}
	\left(
		\frac{3}{5}\frac{\jacprod{q^{10}}{q^{25}}}{\jacprod{q^5}{q^{25}}^2}
		-\frac{\jacprod{q^{10}}{q^{25}}}{\jacprod{q^{10},q^{15},q^{25}}{q^{75}}\jacprod{q^5}{q^{25}}}
		-\frac{2}{5}q\frac{1}{\jacprod{q^5}{q^{25}}}
		+\frac{1}{5}\frac{q^2}{\jacprod{q^{10}}{q^{25}}}
		\right.\nonumber\\&\quad\left.		
		-\frac{1}{5}q^3\frac{\jacprod{q^{5}}{q^{25}}}{\jacprod{q^{10}}{q^{25}}^2}
		+q^4\frac{1}{\jacprod{q^{15},q^{20},q^{25}}{q^{75}}}
	\right)
	\nonumber\\&\quad
	-\frac{q^{-15}}
		{\aqprod{q^{75}}{q^{75}}{\infty}\jacprod{q^5}{q^{75}}}
		\left(\SSeries{-15}{-140}{75}-q^{140}\SSeries{55}{140}{75}\right)
	\nonumber\\&\quad
	+\frac{q^{14}}
		{\aqprod{q^{75}}{q^{75}}{\infty}\jacprod{q^{20}}{q^{75}}}
	\left(\SSeries{15}{-40}{75}-q^{40}\SSeries{35}{40}{75}\right)
	,\\
	\label{EqPP2Mod5Eq2}
	\frac{B}{\aqprod{q}{q}{\infty}}
	&=
	\aqprod{q^{25}}{q^{25}}{\infty}
	\left(
		\frac{1}{5}\frac{\jacprod{q^{10}}{q^{25}}}{\jacprod{q^5}{q^{25}}^2}
		-q^{10}\frac{\jacprod{q^{5}}{q^{75}}}{\jacprod{q^{15},q^{20},q^{25},q^{25}}{q^{75}}}
		+\frac{1}{5}q\frac{1}{\jacprod{q^5}{q^{25}}}
		-\frac{3}{5}\frac{q^2}{\jacprod{q^{10}}{q^{25}}}
		\right.\nonumber\\&\quad\left.		
		+\frac{3}{5}q^3\frac{\jacprod{q^{5}}{q^{25}}}{\jacprod{q^{10}}{q^{25}}^2}
		-q^{-2}\frac{\jacprod{q^{20},q^{30}}{q^{75}}}{\jacprod{q^5,q^{15},q^{15},q^{25},q^{35}}{q^{75}}}
	\right)
	\nonumber\\&\quad
	+\frac{q^{10}}{\aqprod{q^{75}}{q^{75}}{\infty}\jacprod{q^{35}}{q^{75}}}
		\left(\SSeries{15}{-70}{75}-q^{70}\SSeries{50}{70}{75}\right)
	\nonumber\\&\quad
	+\frac{q^{-7}}{\aqprod{q^{75}}{q^{75}}{\infty}\jacprod{q^{10}}{q^{75}}}
		\left(\SSeries{-15}{-170}{75}-q^{170}\SSeries{70}{170}{75}\right)
.
\end{align}

By Lemmas \ref{LemmaFor6b1} and \ref{LemmaFor6b4}
\begin{align*}
	A 
	&= 
	\frac{3}{5}
	-g(q^{40},q^{75})
	-g(q^{50},q^{75})
	-q^{6}\frac{\aqprod{q^{75}}{q^{75}}{\infty}^2\jacprod{q^{10},q^{25}}{q^{75}}}
		{\jacprod{q^{5},q^{20},q^{30}}{q^{75}}}
	+q^{2}\frac{\aqprod{q^{75}}{q^{75}}{\infty}^2\jacprod{q^{25}}{q^{75}}}
		{\jacprod{q^{10},q^{15}}{q^{75}}}
	\\&\quad
	-q^{5}\frac{\aqprod{q^{75}}{q^{75}}{\infty}^2\jacprod{q^{10},q^{30},q^{35}}{q^{75}}}
		{\jacprod{q^{15},q^{15},q^{20},q^{25}}{q^{75}}}
	-\frac{\aqprod{q^{75}}{q^{75}}{\infty}^2\jacprod{q^{25}}{q^{75}}}
		{\jacprod{q^{5},q^{15}}{q^{75}}}
	-q^{19}\frac{\aqprod{q^{75}}{q^{75}}{\infty}^2\jacprod{q^{5},q^{10}}{q^{75}}}
		{\jacprod{q^{20},q^{30},q^{35}}{q^{75}}}
	\\&\quad
	+q^{4}\frac{\aqprod{q^{75}}{q^{75}}{\infty}^2\jacprod{q^{10}}{q^{75}}}
		{\jacprod{q^{5},q^{15}}{q^{75}}}
	-q^{6}\frac{\aqprod{q^{75}}{q^{75}}{\infty}^2\jacprod{q^{5},q^{25},q^{30}}{q^{75}}}
		{\jacprod{q^{10},q^{15},q^{15},q^{35}}{q^{75}}}
	+q^{15}\frac{\aqprod{q^{75}}{q^{75}}{\infty}^2\jacprod{q^{10},q^{10}}{q^{75}}}
		{\jacprod{q^{15},q^{25},q^{35}}{q^{75}}}
	\nonumber\\&\quad
	-q^{-15}\frac{\aqprod{q}{q}{\infty}}
		{\aqprod{q^{75}}{q^{75}}{\infty}\jacprod{q^5}{q^{75}}}
		\left(\SSeries{-15}{-140}{75}-q^{140}\SSeries{55}{140}{75}\right)
	\nonumber\\&\quad
	+q^{14}\frac{\aqprod{q}{q}{\infty}}
		{\aqprod{q^{75}}{q^{75}}{\infty}\jacprod{q^{20}}{q^{75}}}
	\left(\SSeries{15}{-40}{75}-q^{40}\SSeries{35}{40}{75}\right)
.
\end{align*}

By expanding 
$g(q^{40},q^{75})$ 
into products by (\ref{EqGToProducts}),
replacing $g(q^{50},q^{75})$ with $V_{\chi,1}(25\tau)$,
and then
multiplying by 
{\allowbreak$q^{-6}\frac{\aqprod{q}{q}{\infty}\jacprod{q^5,q^{20},q^{30}}{q^{75}}}
{\aqprod{q^{75}}{q^{75}}{\infty}^2\jacprod{q^{10},q^{15}}{q^{75}}}$},
we find (\ref{EqPP2Mod5Eq1}) to be equivalent to
\begin{align*}
	&\quad
	-1
	+\frac{\GEta{75}{5}{\tau}\GEta{75}{20}{\tau}\GEta{75}{30}{\tau}}
		{\GEta{75}{10}{\tau}^{2}\GEta{75}{15}{\tau}}
	-\frac{\GEta{75}{5}{\tau}\GEta{75}{30}{\tau}^{2}\GEta{75}{35}{\tau}}
		{\GEta{75}{15}{\tau}^{2}\GEta{75}{25}{\tau}^{2}}
	-\frac{\GEta{75}{20}{\tau}\GEta{75}{30}{\tau}}
		{\GEta{75}{10}{\tau}\GEta{75}{15}{\tau}}
	\\&\quad
	-\frac{\GEta{75}{5}{\tau}^{2}}
		{\GEta{75}{25}{\tau}\GEta{75}{35}{\tau}}
	+\frac{\GEta{75}{20}{\tau}\GEta{75}{30}{\tau}}
		{\GEta{75}{15}{\tau}\GEta{75}{25}{\tau}}
	-\frac{\GEta{75}{5}{\tau}^{2}\GEta{75}{20}{\tau}\GEta{75}{30}{\tau}^{2}}
		{\GEta{75}{10}{\tau}^{2}\GEta{75}{15}{\tau}^{2}\GEta{75}{35}{\tau}}
	\\&\quad
	+\frac{\GEta{75}{5}{\tau}\GEta{75}{10}{\tau}\GEta{75}{20}{\tau}\GEta{75}{30}{\tau}}
		{\GEta{75}{15}{\tau}\GEta{75}{25}{\tau}^{2}\GEta{75}{35}{\tau}}
	+\frac{4}{15}\frac{\GEta{75}{5}{\tau}^{2}\GEta{75}{20}{\tau}^{2}\GEta{75}{30}{\tau}}
		{\GEta{75}{10}{\tau}^{3}\GEta{75}{15}{\tau}\GEta{75}{25}{\tau}}
	+\frac{2}{15}\frac{\GEta{75}{5}{\tau}\GEta{75}{35}{\tau}}
		{\GEta{75}{20}{\tau}\GEta{75}{25}{\tau}}
	\\&\quad
	-\frac{1}{15}\frac{\GEta{75}{5}{\tau}^{2}\GEta{75}{20}{\tau}^{2}\GEta{75}{30}{\tau}}
		{\GEta{75}{10}{\tau}\GEta{75}{15}{\tau}\GEta{75}{25}{\tau}\GEta{75}{35}{\tau}^{2}}
	-\frac{1}{30}\frac{\GEta{75}{20}{\tau}\GEta{75}{35}{\tau}}
		{\GEta{75}{5}{\tau}\GEta{75}{25}{\tau}}
	+\frac{4}{15}\frac{\GEta{75}{10}{\tau}^{2}\GEta{75}{20}{\tau}\GEta{75}{30}{\tau}}
		{\GEta{75}{5}{\tau}^{2}\GEta{75}{15}{\tau}\GEta{75}{25}{\tau}}
	\\&\quad
	+\frac{2}{15}\frac{\GEta{75}{5}{\tau}\GEta{75}{20}{\tau}^{4}}
		{\GEta{75}{10}{\tau}^{4}\GEta{75}{25}{\tau}}
	+\frac{1}{15}\frac{\GEta{75}{5}{\tau}\GEta{75}{30}{\tau}\GEta{75}{35}{\tau}^{3}}
		{\GEta{75}{10}{\tau}\GEta{75}{15}{\tau}\GEta{75}{20}{\tau}^{2}\GEta{75}{25}{\tau}}
	+\frac{1}{30}\frac{\GEta{75}{5}{\tau}^{4}\GEta{75}{20}{\tau}}
		{\GEta{75}{10}{\tau}\GEta{75}{25}{\tau}\GEta{75}{35}{\tau}^{3}}
	\\&\quad
	+\frac{\GEta{75}{5}{\tau}\GEta{75}{20}{\tau}\GEta{75}{30}{\tau}}
		{\GEta{75}{10}{\tau}\GEta{75}{25}{\tau}\GEta{75}{0}{\tau}}
		V_{\chi,1}(25\tau)
	\\
	&=
	\frac{3}{5}\frac{\GEta{1}{0}{\tau}^{1/2}\GEta{75}{5}{\tau}\GEta{75}{20}{\tau}\GEta{75}{30}{\tau}\GEta{25}{10}{\tau}}
			{\GEta{75}{0}{\tau}^{1/2}\GEta{75}{10}{\tau}\GEta{25}{5}{\tau}^2}	
	-\frac{\GEta{1}{0}{\tau}^{1/2}\GEta{75}{5}{\tau}\GEta{75}{20}{\tau}\GEta{75}{30}{\tau}\GEta{25}{10}{\tau}}
			{\GEta{75}{0}{\tau}^{1/2}\GEta{75}{10}{\tau}^2\GEta{75}{15}{\tau}\GEta{75}{25}{\tau}\GEta{25}{5}{\tau}}
	\\&\quad
	-\frac{2}{5}\frac{\GEta{1}{0}{\tau}^{1/2}\GEta{75}{5}{\tau}\GEta{75}{20}{\tau}\GEta{75}{30}{\tau}}
			{\GEta{75}{0}{\tau}^{1/2}\GEta{75}{10}{\tau}\GEta{25}{5}{\tau}}
	+\frac{1}{5}\frac{\GEta{1}{0}{\tau}^{1/2}\GEta{75}{5}{\tau}\GEta{75}{20}{\tau}\GEta{75}{30}{\tau}}
			{\GEta{75}{0}{\tau}^{1/2}\GEta{75}{10}{\tau}\GEta{25}{10}{\tau}}
	\\&\quad
	-\frac{1}{5}\frac{\GEta{1}{0}{\tau}^{1/2}\GEta{75}{5}{\tau}\GEta{75}{20}{\tau}\GEta{75}{30}{\tau}\GEta{25}{5}{\tau}}
			{\GEta{75}{0}{\tau}^{1/2}\GEta{75}{10}{\tau}\GEta{25}{10}{\tau}^2}
	+\frac{\GEta{1}{0}{\tau}^{1/2}\GEta{75}{5}{\tau}\GEta{75}{20}{\tau}\GEta{75}{30}{\tau}}
			{\GEta{75}{0}{\tau}^{1/2}\GEta{75}{10}{\tau}\GEta{75}{15}{\tau}\GEta{75}{20}{\tau}\GEta{75}{25}{\tau}}
.
\end{align*}
However each term is a modular function with respect to $\Gamma_1(75)$ and by 
the valence formula it is sufficient to verify the identity in the 
$q$-expansion past $q^{199}$. For the term with $V_{\chi,1}(25\tau)$ we use that
\begin{align*}
	&\frac{\GEta{75}{5}{\tau}\GEta{75}{20}{\tau}\GEta{75}{30}{\tau}}
		{\GEta{75}{10}{\tau}\GEta{75}{25}{\tau}\GEta{75}{0}{\tau}}
	V_{\chi,1}(25\tau)
	\\
	&=
	\frac{\GEta{75}{5}{\tau}\GEta{75}{20}{\tau}\GEta{75}{30}{\tau}\GEta{1}{0}{\tau}^{1/2}\GEta{3}{0}{\tau}^{1/2}}
		{\GEta{75}{10}{\tau}\GEta{75}{25}{\tau}\GEta{5}{0}{\tau}}
	\cdot
	\frac{\eta(5\tau)^2}{\eta(\tau)\eta(3\tau)\eta(75\tau)^2}
	\cdot
	V_{\chi,1}(25\tau)
,
\end{align*}
to see a product of a modular function, a meromorphic modular form of weight 
$-1$, and a holomorphic modular form of weight $1$ all with respect to
$\Gamma_1(75)$. We can compute the order of the eta quotient at the cusps
of $\Gamma_1(75)$ based on the formulas for the order at the cusps of 
$\Gamma_0(75)$ (Theorem 1.65 of \cite{Ono1}) and using
\begin{align*}
	Ord_{\Gamma_1(75)} (f;\zeta)
	&=
	ord(f;\zeta)N(\Gamma_1(75),\zeta) 
	= \frac{ Ord_{\Gamma_0(75)} (f;\zeta ')N(\Gamma_1(75),\zeta)}
		{N(\Gamma_0(75),\zeta ')}
,
\end{align*}
where $\zeta$ is a cusp of $\Gamma_1(75)$ and $\zeta '$ is a cusp of 
$\Gamma_0(75)$ that is $\Gamma_0(75)$-equivalent to $\zeta$. We can ignore the
contribution of $V_{\chi,1}(25\tau)$ since it can only possibly increase the
order at a cusp.

By Lemmas \ref{LemmaFor6b1} and \ref{LemmaFor6b4}
\begin{align*}
	B &=
	\frac{1}{5}
	+ g(q^{65},q^{75})  + g(q^{55},q^{75})
	-q^{17}\frac{\aqprod{q^{75}}{q^{75}}{\infty}^2\jacprod{q^{5},q^{20}}{q^{75}}}
		{\jacprod{q^{25},q^{30},q^{35}}{q^{75}}}
	+q^{11}\frac{\aqprod{q^{75}}{q^{75}}{\infty}^2\jacprod{q^{5}}{q^{75}}}
		{\jacprod{q^{15},q^{20}}{q^{75}}}
	\\&\quad
	-q^{2}\frac{\aqprod{q^{75}}{q^{75}}{\infty}^2\jacprod{q^{10},q^{20},q^{30}}{q^{75}}}
		{\jacprod{q^{5},q^{15},q^{15},q^{25}}{q^{75}}}
	+q^{10}\frac{\aqprod{q^{75}}{q^{75}}{\infty}^2\jacprod{q^{5},q^{35}}{q^{75}}}
		{\jacprod{q^{15},q^{20},q^{25}}{q^{75}}}
	-q^{-2}\frac{\aqprod{q^{75}}{q^{75}}{\infty}^2\jacprod{q^{25},q^{35}}{q^{75}}}
		{\jacprod{q^{5},q^{10},q^{30}}{q^{75}}}
	\\&\quad
	-q^{5}\frac{\aqprod{q^{75}}{q^{75}}{\infty}^2\jacprod{q^{35}}{q^{75}}}
		{\jacprod{q^{15},q^{25}}{q^{75}}}
	+q^{-1}\frac{\aqprod{q^{75}}{q^{75}}{\infty}^2\jacprod{q^{20},q^{25},q^{30}}{q^{75}}}
		{\jacprod{q^{5},q^{15},q^{15},q^{35}}{q^{75}}}
	+\frac{\aqprod{q^{75}}{q^{75}}{\infty}^2\jacprod{q^{25}}{q^{75}}}
		{\jacprod{q^{5},q^{15}}{q^{75}}}
	\\&\quad
	+q^{10}\frac{\aqprod{q}{q}{\infty}}{\aqprod{q^{75}}{q^{75}}{\infty}\jacprod{q^{35}}{q^{75}}}
		\left(\SSeries{15}{-70}{75}-q^{70}\SSeries{50}{70}{75}\right)
	\\&\quad
	+q^{-7}\frac{\aqprod{q}{q}{\infty}}{\aqprod{q^{75}}{q^{75}}{\infty}\jacprod{q^{10}}{q^{75}}}
		\left(\SSeries{-15}{-170}{75}-q^{170}\SSeries{70}{170}{75}\right)
.	
\end{align*}

\sloppy
By expanding 
$g(q^{65},q^{75})$  and $g(q^{55},q^{75})$
into products by (\ref{EqGToProducts}) and then
multiplying by 
$
q^{-17}\frac{\aqprod{q}{q}{\infty}\jacprod{q^{25},q^{30},q^{35}}{q^{75}}}
{\aqprod{q^{75}}{q^{75}}\infty{}^2\jacprod{q^5,q^{20}}{q^{75}}}
$
we find (\ref{EqPP2Mod5Eq2}) to be equivalent to
\begin{align*}
	&\quad
	-1
	+\frac{\GEta{75}{25}{\tau}\GEta{75}{30}{\tau}\GEta{75}{35}{\tau}}
		{\GEta{75}{15}{\tau}\GEta{75}{20}{\tau}^{2}}
	-\frac{\GEta{75}{10}{\tau}\GEta{75}{30}{\tau}^{2}\GEta{75}{35}{\tau}}
		{\GEta{75}{5}{\tau}^{2}\GEta{75}{15}{\tau}^{2}}
	+\frac{\GEta{75}{30}{\tau}\GEta{75}{35}{\tau}^{2}}
		{\GEta{75}{15}{\tau}\GEta{75}{20}{\tau}^{2}}
	\\&\quad
	-\frac{\GEta{75}{25}{\tau}^{2}\GEta{75}{35}{\tau}^{2}}
		{\GEta{75}{5}{\tau}^{2}\GEta{75}{10}{\tau}\GEta{75}{20}{\tau}}
	-\frac{\GEta{75}{30}{\tau}\GEta{75}{35}{\tau}^{2}}
		{\GEta{75}{5}{\tau}\GEta{75}{15}{\tau}\GEta{75}{20}{\tau}}
	+\frac{\GEta{75}{25}{\tau}^{2}\GEta{75}{30}{\tau}^{2}}
		{\GEta{75}{5}{\tau}^{2}\GEta{75}{15}{\tau}^{2}}
	\\&\quad
	+\frac{\GEta{75}{25}{\tau}^{2}\GEta{75}{30}{\tau}\GEta{75}{35}{\tau}}
		{\GEta{75}{5}{\tau}^{2}\GEta{75}{15}{\tau}\GEta{75}{20}{\tau}}
	-\frac{3}{10}\frac{\GEta{75}{25}{\tau}\GEta{75}{30}{\tau}}
		{\GEta{75}{15}{\tau}\GEta{75}{35}{\tau}}
	-\frac{2}{5}\frac{\GEta{75}{10}{\tau}\GEta{75}{25}{\tau}\GEta{75}{35}{\tau}^{2}}
		{\GEta{75}{5}{\tau}^{3}\GEta{75}{20}{\tau}}
	\\&\quad
	+\frac{1}{5}\frac{\GEta{75}{25}{\tau}\GEta{75}{30}{\tau}\GEta{75}{35}{\tau}}
		{\GEta{75}{10}{\tau}^{2}\GEta{75}{15}{\tau}}
	+\frac{1}{10}\frac{\GEta{75}{10}{\tau}\GEta{75}{25}{\tau}\GEta{75}{35}{\tau}^{2}}
		{\GEta{75}{5}{\tau}\GEta{75}{20}{\tau}^{3}}
	+\frac{3}{10}\frac{\GEta{75}{25}{\tau}\GEta{75}{30}{\tau}\GEta{75}{35}{\tau}^{4}}
		{\GEta{75}{5}{\tau}\GEta{75}{15}{\tau}\GEta{75}{20}{\tau}^{4}}
	\\&\quad
	+\frac{2}{5}\frac{\GEta{75}{5}{\tau}^{2}\GEta{75}{25}{\tau}}
		{\GEta{75}{20}{\tau}\GEta{75}{35}{\tau}^{2}}
	+\frac{1}{5}\frac{\GEta{75}{10}{\tau}^{3}\GEta{75}{25}{\tau}\GEta{75}{30}{\tau}\GEta{75}{35}{\tau}}
		{\GEta{75}{5}{\tau}^{4}\GEta{75}{15}{\tau}\GEta{75}{20}{\tau}}
	+\frac{1}{10}\frac{\GEta{75}{20}{\tau}^{2}\GEta{75}{25}{\tau}\GEta{75}{35}{\tau}}
		{\GEta{75}{5}{\tau}\GEta{75}{10}{\tau}^{3}}
	\\
	&=
	\frac{1}{5}\frac{\GEta{1}{0}{\tau}^{1/2}\GEta{75}{25}{\tau}^2\GEta{75}{30}{\tau}\GEta{75}{35}{\tau}\GEta{25}{10}{\tau}}
			{\GEta{75}{0}{\tau}^{1/2}\GEta{75}{5}{\tau}\GEta{75}{20}{\tau}\GEta{25}{5}{\tau}^2}	
	-\frac{\GEta{1}{0}{\tau}^{1/2}\GEta{75}{30}{\tau}\GEta{75}{35}{\tau}}
			{\GEta{75}{0}{\tau}^{1/2}\GEta{75}{15}{\tau}\GEta{75}{20}{\tau}^2}
	\\&\quad
	+\frac{1}{5}\frac{\GEta{1}{0}{\tau}^{1/2}\GEta{75}{25}{\tau}^2\GEta{75}{30}{\tau}\GEta{75}{35}{\tau}}
			{\GEta{75}{0}{\tau}^{1/2}\GEta{75}{5}{\tau}\GEta{75}{20}{\tau}\GEta{25}{5}{\tau}}
	-\frac{3}{5}\frac{\GEta{1}{0}{\tau}^{1/2}\GEta{75}{25}{\tau}^2\GEta{75}{30}{\tau}\GEta{75}{35}{\tau}}
			{\GEta{75}{0}{\tau}^{1/2}\GEta{75}{5}{\tau}\GEta{75}{20}{\tau}\GEta{25}{10}{\tau}}
	\\&\quad
	+\frac{3}{5}\frac{\GEta{1}{0}{\tau}^{1/2}\GEta{75}{25}{\tau}^2\GEta{75}{30}{\tau}\GEta{75}{35}{\tau}\GEta{25}{5}{\tau}}
			{\GEta{75}{0}{\tau}^{1/2}\GEta{75}{5}{\tau}\GEta{75}{20}{\tau}\GEta{25}{10}{\tau}^2}
	-\frac{\GEta{1}{0}{\tau}^{1/2}\GEta{75}{25}{\tau}\GEta{75}{30}{\tau}^2}
			{\GEta{75}{0}{\tau}^{1/2}\GEta{75}{5}{\tau}^2\GEta{75}{15}{\tau}^2}
.
\end{align*}
However each term is a modular function with respect to $\Gamma_1(75)$ and by
the valence formula we find it is sufficient to verify the identity in the
$q$-expansion past $q^{201}$. Thus Theorem \ref{TheoremPP25} holds.
\fussy

\section{Proof of Theorem \ref{TheoremPP35}}

First we set
\begin{align*}
	A &= \frac{3}{5} + V(2) - V(11)-qU(4)+q^4U(13)
	,\\
	B	&=	\frac{1}{5} +V(5) - V(8) - q^2U(7) + q^3U(10)
.
\end{align*}
By (\ref{PP3Zeta5}) the left hand side of Theorem \ref{TheoremPP35}
is $\frac{A+(\zeta_5+\zeta_5^4)B}{\aqprod{q}{q}{\infty}}$,
so we must show
\begin{align}\label{EqPP3Mod5Eq1}
	\frac{A}{\aqprod{q}{q}{\infty}}
	&=
	\aqprod{q^{25}}{q^{25}}{\infty}
	\left(
		\frac{3}{5}\frac{\jacprod{q^{10}}{q^{25}}}{\jacprod{q^5}{q^{25}}^2}
		-q^5\frac{\jacprod{q^{10}}{q^{75}}}{\jacprod{q^5,q^{20},q^{25},q^{30}}{q^{75}}}
		-\frac{2}{5}\frac{q}{\jacprod{q^5}{q^{25}}}
		+\frac{1}{5}\frac{q^2}{\jacprod{q^{10}}{q^{25}}}
		\right.\nonumber\\&\quad\left.		
		-\frac{1}{5}q^3\frac{\jacprod{q^{5}}{q^{25}}}{\jacprod{q^{10}}{q^{25}}^2}
		-q^8\frac{\jacprod{q^{5}}{q^{75}}}{\jacprod{q^{10},q^{15},q^{25},q^{35}}{q^{75}}}
	\right)
	,\\
	\label{EqPP3Mod5Eq2}
	\frac{B}{\aqprod{q}{q}{\infty}}
	&= 
	\aqprod{q^{25}}{q^{25}}{\infty}
	\left(
		\frac{1}{5}\frac{\jacprod{q^{10}}{q^{25}}}{\jacprod{q^5}{q^{25}}^2}
		+\frac{1}{5}q\frac{1}{\jacprod{q^5}{q^{25}}}
		-q^6\frac{1}{\jacprod{q^{15},q^{25},q^{35}}{q^{75}}}
		+\frac{2}{5}\frac{q^2}{\jacprod{q^{10}}{q^{25}}}
		\right.\nonumber\\&\quad\left.		
		-q^2\frac{1}{\jacprod{q^{5},q^{25},q^{30}}{q^{75}}}
		+\frac{3}{5}q^3\frac{\jacprod{q^{5}}{q^{25}}}{\jacprod{q^{10}}{q^{25}}^2}
		+q^8\frac{\jacprod{q^{5}}{q^{75}}}{\jacprod{q^{10},q^{15},q^{25},q^{35}}{q^{75}}}
	\right)
.
\end{align}

By Lemma \ref{LemmaFor3b2}
\begin{align*}
	A
	&=
	\frac{3}{5} + h(q^{65},q^{75}) - h(q^{20},q^{75})
	+q^{5}\frac{\aqprod{q^{75}}{q^{75}}{\infty}^2\jacprod{q^{35}}{q^{75}}}
		{\jacprod{q^{15},q^{25}}{q^{75}}}
	+q^{16}\frac{\aqprod{q^{75}}{q^{75}}{\infty}^2\jacprod{q^{5}}{q^{75}}}
		{\jacprod{q^{30},q^{35}}{q^{75}}}
	\\&\quad
	-q^{8}\frac{\aqprod{q^{75}}{q^{75}}{\infty}^2\jacprod{q^{25}}{q^{75}}}
		{\jacprod{q^{20},q^{30}}{q^{75}}}
	-q\frac{\aqprod{q^{75}}{q^{75}}{\infty}^2\jacprod{q^{20}}{q^{75}}}
		{\jacprod{q^{5},q^{15}}{q^{75}}}
	+q^{4}\frac{\aqprod{q^{75}}{q^{75}}{\infty}^2\jacprod{q^{10}}{q^{75}}}
		{\jacprod{q^{5},q^{15}}{q^{75}}}
	\\&\quad
	-q^{4}\frac{\aqprod{q^{75}}{q^{75}}{\infty}^2\jacprod{q^{35}}{q^{75}}}
		{\jacprod{q^{10},q^{30}}{q^{75}}}
	-q^{15}\frac{\aqprod{q^{75}}{q^{75}}{\infty}^2\jacprod{q^{5}}{q^{75}}}
		{\jacprod{q^{25},q^{30}}{q^{75}}}
	+q^{7}\frac{\aqprod{q^{75}}{q^{75}}{\infty}^2\jacprod{q^{25}}{q^{75}}}
		{\jacprod{q^{15},q^{35}}{q^{75}}}
.
\end{align*}

\sloppy
By expanding 
$h(q^{65},q^{75})$ and $h(q^{20},q^{75})$
into products by (\ref{EqHToProducts}) and then
multiplying (\ref{EqPP3Mod5Eq1}) by  
$q^{-1}\frac{\aqprod{q}{q}{\infty}\jacprod{q^5,q^{15}}{q^{75}}}
{\aqprod{q^{75}}{q^{75}}{\infty}^2\jacprod{q^{20}}{q^{75}}}$,
we find we are to show
\begin{align*} 
	&-1
		+\frac{\GEta{75}{5}{\tau}\GEta{75}{35}{\tau}}
			{\GEta{75}{20}{\tau}\GEta{75}{25}{\tau}}
		-\frac{\GEta{75}{5}{\tau}\GEta{75}{15}{\tau}\GEta{75}{25}{\tau}}
			{\GEta{75}{20}{\tau}^2\GEta{75}{30}{\tau}}
		+\frac{\GEta{75}{5}{\tau}^2\GEta{75}{15}{\tau}}
			{\GEta{75}{20}{\tau}\GEta{75}{30}{\tau}\GEta{75}{35}{\tau}}
		\\&\quad
		+\frac{\GEta{75}{10}{\tau}}
			{\GEta{75}{20}{\tau}}
		-\frac{\GEta{75}{5}{\tau}\GEta{75}{15}{\tau}\GEta{75}{35}{\tau}}
			{\GEta{75}{10}{\tau}\GEta{75}{20}{\tau}\GEta{75}{30}{\tau}}
		+\frac{\GEta{75}{5}{\tau}\GEta{75}{25}{\tau}}
			{\GEta{75}{20}{\tau}\GEta{75}{35}{\tau}}
		-\frac{\GEta{75}{5}{\tau}^2\GEta{75}{15}{\tau}}
			{\GEta{75}{20}{\tau}\GEta{75}{25}{\tau}\GEta{75}{30}{\tau}}
		\\&\quad
		+\frac{3}{10}\frac{\GEta{75}{5}{\tau}\GEta{75}{10}{\tau}\GEta{75}{15}{\tau}\GEta{75}{35}{\tau}}
			{\GEta{75}{20}{\tau}^3\GEta{75}{30}{\tau}}
		+\frac{3}{10}\frac{\GEta{75}{5}{\tau}\GEta{75}{20}{\tau}^2\GEta{75}{15}{\tau}}
			{\GEta{75}{10}{\tau}^3\GEta{75}{30}{\tau}}
		-\frac{2}{5}\frac{\GEta{75}{5}{\tau}^2}
			{\GEta{75}{35}{\tau}^2}
		\\&\quad
		+\frac{2}{5}\frac{\GEta{75}{5}{\tau}\GEta{75}{35}{\tau}^3}
			{\GEta{75}{20}{\tau}^4}
		-\frac{1}{5}\frac{\GEta{75}{10}{\tau}\GEta{75}{15}{\tau}\GEta{75}{35}{\tau}}
			{\GEta{75}{5}{\tau}\GEta{75}{20}{\tau}\GEta{75}{30}{\tau}}
		+\frac{1}{5}\frac{\GEta{75}{5}{\tau}^4\GEta{75}{15}{\tau}}
			{\GEta{75}{20}{\tau}\GEta{75}{35}{\tau}^3\GEta{75}{30}{\tau}}
		\\&\quad
		+\frac{1}{10}\frac{\GEta{75}{5}{\tau}^2}
			{\GEta{75}{10}{\tau}^2}
		+\frac{1}{10}\frac{\GEta{75}{10}{\tau}^3}
			{\GEta{75}{5}{\tau}^2\GEta{75}{20}{\tau}}
	\\
	&=
	\frac{3}{5}\frac{\GEta{1}{0}{\tau}^{1/2}\GEta{75}{5}{\tau}\GEta{75}{15}{\tau}\GEta{75}{25}{\tau}\GEta{25}{10}{\tau}}
			{\GEta{75}{0}{\tau}^{1/2}\GEta{75}{20}{\tau}\GEta{25}{5}{\tau}^2}	
	-\frac{\GEta{1}{0}{\tau}^{1/2}\GEta{75}{10}{\tau}\GEta{75}{15}{\tau}}
			{\GEta{75}{0}{\tau}^{1/2}\GEta{75}{20}{\tau}^2\GEta{75}{30}{\tau}}
	\\&\quad
	-\frac{2}{5}\frac{\GEta{1}{0}{\tau}^{1/2}\GEta{75}{5}{\tau}\GEta{75}{15}{\tau}\GEta{75}{25}{\tau}}
			{\GEta{75}{0}{\tau}^{1/2}\GEta{75}{20}{\tau}\GEta{25}{5}{\tau}}
	+\frac{1}{5}\frac{\GEta{1}{0}{\tau}^{1/2}\GEta{75}{5}{\tau}\GEta{75}{15}{\tau}\GEta{75}{25}{\tau}}
			{\GEta{75}{0}{\tau}^{1/2}\GEta{75}{20}{\tau}\GEta{25}{10}{\tau}}
	\\&\quad
	-\frac{1}{5}\frac{\GEta{1}{0}{\tau}^{1/2}\GEta{75}{5}{\tau}\GEta{75}{15}{\tau}\GEta{75}{25}{\tau}\GEta{25}{5}{\tau}}
			{\GEta{75}{0}{\tau}^{1/2}\GEta{75}{20}{\tau}\GEta{25}{10}{\tau}^2}
	-\frac{\GEta{1}{0}{\tau}^{1/2}\GEta{75}{5}{\tau}^2}
			{\GEta{75}{0}{\tau}^{1/2}\GEta{75}{10}{\tau}\GEta{75}{20}{\tau}\GEta{75}{35}{\tau}}
.
\end{align*}
Each term 
is a modular function with respect to $\Gamma_1(75)$ and by the valence formula
the identity holds if we verify the $q$-expansion past $q^{199}$.
\fussy

By Lemma \ref{LemmaFor3b2}
\begin{align*}
	B
	&=
	\frac{1}{5} + h(q^{50},q^{75}) - h(q^{35},q^{75})
	+q^{8}\frac{\aqprod{q^{75}}{q^{75}}{\infty}^2\jacprod{q^{20}}{q^{75}}}{\jacprod{q^{15},q^{35}}{q^{75}}}
	-q^{12}\frac{\aqprod{q^{75}}{q^{75}}{\infty}^2\jacprod{q^{10}}{q^{75}}}{\jacprod{q^{20},q^{30}}{q^{75}}}
	\\&\quad
	-q^{2}\frac{\aqprod{q^{75}}{q^{75}}{\infty}^2\jacprod{q^{35}}{q^{75}}}{\jacprod{q^{5},q^{30}}{q^{75}}}
	+q^{8}\frac{\aqprod{q^{75}}{q^{75}}{\infty}^2\jacprod{q^{5}}{q^{75}}}{\jacprod{q^{10},q^{15}}{q^{75}}}
	-q^{11}\frac{\aqprod{q^{75}}{q^{75}}{\infty}^2\jacprod{q^{5}}{q^{75}}}{\jacprod{q^{15},q^{20}}{q^{75}}}
	\\&\quad
	+q^{3}\frac{\aqprod{q^{75}}{q^{75}}{\infty}^2\jacprod{q^{25}}{q^{75}}}{\jacprod{q^{5},q^{30}}{q^{75}}}
	-q^{6}\frac{\aqprod{q^{75}}{q^{75}}{\infty}^2\jacprod{q^{20}}{q^{75}}}{\jacprod{q^{10},q^{30}}{q^{75}}}
	+q^{10}\frac{\aqprod{q^{75}}{q^{75}}{\infty}^2\jacprod{q^{10}}{q^{75}}}{\jacprod{q^{15},q^{25}}{q^{75}}}
.
\end{align*}

By expanding 
$h(q^{35},q^{75})$
into products by (\ref{EqHToProducts}),
replacing $h(q^{50},q^{75})$ with $V_{\chi,1}(25\tau)$,
and then
multiplying (\ref{EqPP3Mod5Eq2}) by 
$q^{-2}\frac{\aqprod{q^{75}}{q^{75}}{\infty}^2\jacprod{q^{5},q^{30}}{q^{75}}}
{\aqprod{q}{q}{\infty}\jacprod{q^{35}}{q^{75}}}$
we find we are to show
\begin{align*}
	&-1 
	+\frac{\GEta{75}{25}{\tau}}
		{\GEta{75}{35}{\tau}}
	+\frac{\GEta{75}{5}{\tau}\GEta{75}{20}{\tau}\GEta{75}{30}{\tau}}
		{\GEta{75}{15}{\tau}\GEta{75}{35}{\tau}^2}
	-\frac{\GEta{75}{5}{\tau}^2\GEta{75}{30}{\tau}}
		{\GEta{75}{15}{\tau}\GEta{75}{20}{\tau}\GEta{75}{35}{\tau}}
	\\&
	-\frac{\GEta{75}{5}{\tau}\GEta{75}{10}{\tau}}
		{\GEta{75}{20}{\tau}\GEta{75}{35}{\tau}}
	-\frac{\GEta{75}{5}{\tau}\GEta{75}{20}{\tau}}
		{\GEta{75}{10}{\tau}\GEta{75}{35}{\tau}}
	+\frac{\GEta{75}{5}{\tau}\GEta{75}{10}{\tau}\GEta{75}{30}{\tau}}
		{\GEta{75}{15}{\tau}\GEta{75}{25}{\tau}\GEta{75}{35}{\tau}}
	\\&
	+\frac{\GEta{75}{5}{\tau}^2\GEta{75}{30}{\tau}}
		{\GEta{75}{10}{\tau}\GEta{75}{15}{\tau}\GEta{75}{35}{\tau}}
	+\frac{4}{15}\frac{\GEta{75}{10}{\tau}}
		{\GEta{75}{5}{\tau}}
	-\frac{4}{15}\frac{\GEta{75}{5}{\tau}^4}
		{\GEta{75}{35}{\tau}^4}
	-\frac{2}{15}\frac{\GEta{75}{5}{\tau}^2\GEta{75}{20}{\tau}\GEta{75}{30}{\tau}}
		{\GEta{75}{10}{\tau}^2\GEta{75}{15}{\tau}\GEta{75}{35}{\tau}}
	\\&
	-\frac{2}{15}\frac{\GEta{75}{10}{\tau}^3\GEta{75}{30}{\tau}}
		{\GEta{75}{5}{\tau}^2\GEta{75}{15}{\tau}\GEta{75}{35}{\tau}}
	-\frac{1}{15}\frac{\GEta{75}{5}{\tau}\GEta{75}{10}{\tau}}
		{\GEta{75}{20}{\tau}^2}	
	-\frac{1}{15}\frac{\GEta{75}{5}{\tau}\GEta{75}{20}{\tau}^3}
		{\GEta{75}{10}{\tau}^3\GEta{75}{35}{\tau}}
	\\&
	+\frac{1}{30}\frac{\GEta{75}{5}{\tau}^2\GEta{75}{20}{\tau}\GEta{75}{30}{\tau}}
		{\GEta{75}{35}{\tau}^3\GEta{75}{15}{\tau}}
	-\frac{1}{30}\frac{\GEta{75}{5}{\tau}\GEta{75}{30}{\tau}\GEta{75}{35}{\tau}^2}
		{\GEta{75}{20}{\tau}^3\GEta{75}{15}{\tau}}
	+\frac{\GEta{75}{5}{\tau}\GEta{75}{30}{\tau}}{\GEta{75}{0}{\tau}\GEta{75}{35}{\tau}} V_{\chi,1}(25\tau)
	\\
	=&
	\frac{1}{5}\frac{\GEta{1}{0}{\tau}^{1/2}\GEta{75}{5}{\tau}\GEta{75}{25}{\tau}\GEta{75}{30}{\tau}\GEta{25}{10}{\tau}}
		{\GEta{75}{0}{\tau}^{1/2}\GEta{75}{35}{\tau}\GEta{25}{5}{\tau}^2}
	+\frac{1}{5}\frac{\GEta{1}{0}{\tau}^{1/2}\GEta{75}{5}{\tau}\GEta{75}{25}{\tau}\GEta{75}{30}{\tau}}
		{\GEta{75}{0}{\tau}^{1/2}\GEta{75}{35}{\tau}\GEta{25}{5}{\tau}}
	\\&
	-\frac{\GEta{1}{0}{\tau}^{1/2}\GEta{75}{5}{\tau}\GEta{75}{30}{\tau}}
		{\GEta{75}{0}{\tau}^{1/2}\GEta{75}{15}{\tau}\GEta{75}{35}{\tau}^2}
	+\frac{2}{5}\frac{\GEta{1}{0}{\tau}^{1/2}\GEta{75}{5}{\tau}\GEta{75}{25}{\tau}\GEta{75}{30}{\tau}}
		{\GEta{75}{0}{\tau}^{1/2}\GEta{75}{35}{\tau}\GEta{25}{10}{\tau}}
	-\frac{\GEta{1}{0}{\tau}^{1/2}}
		{\GEta{75}{0}{\tau}^{1/2}\GEta{75}{35}{\tau}}
	\\&
	+\frac{2}{5}\frac{\GEta{1}{0}{\tau}^{1/2}\GEta{75}{5}{\tau}\GEta{75}{25}{\tau}\GEta{75}{30}{\tau}\GEta{25}{5}{\tau}}
		{\GEta{75}{0}{\tau}^{1/2}\GEta{75}{35}{\tau}\GEta{25}{10}{\tau}^2}
	+\frac{\GEta{1}{0}{\tau}^{1/2}\GEta{75}{5}{\tau}^2\GEta{75}{30}{\tau}}
		{\GEta{75}{0}{\tau}^{1/2}\GEta{75}{10}{\tau}\GEta{75}{15}{\tau}\GEta{75}{35}{\tau}^2}
.
\end{align*}
Each term is a modular functions with respect to $\Gamma_1(75)$ and
by the valence formula we need only verify the equality holds in the $q$-expansion
past $q^{198}$. Here we handle the term involving $V_{\chi,1}(25\tau)$
as in the proof of Theorem \ref{TheoremPP25}. Thus Theorem \ref{TheoremPP35} holds.


\section{Proof of Theorem \ref{TheoremPP45}}
We note that $V_3^5(4) = - V_3^5(11)$, $V_3^5(7) = - V_3^5(8)$, and
$V_3^5(10) = - V_3^5(5)$ so 
we set
\begin{align*}
	A &= \frac{3}{5} + V_3^5(1)+V_3^5(5) -q^2U_3^5(7)+q^5U_3^5(16)
	,\\
	B &= \frac{1}{5} - V_3^5(11) + V_3^5(8) - q^3U_3^5(10) + q^4U_3^5(13)
.
\end{align*}
By (\ref{PP4Zeta5}) the left hand side of Theorem \ref{TheoremPP45}
is $\frac{A+(\zeta_5+\zeta_5^4)B}{\aqprod{q}{q}{\infty}}$,
so we must show
\begin{align}\label{EqPP4Mod5Eq1}
	\frac{A}{\aqprod{q}{q}{\infty}}
	&=
	\aqprod{q^{25}}{q^{25}}{\infty}
	\left(
		\frac{3}{5}\frac{\jacprod{q^{10}}{q^{25}}}{\jacprod{q^5}{q^{25}}^2}
		-q^5\frac{\jacprod{q^{10}}{q^{75}}}{\jacprod{q^{5},q^{20},q^{25},q^{30}}{q^{75}}}
		-\frac{2}{5}q\frac{1}{\jacprod{q^5}{q^{25}}}
		+q\frac{1}{\jacprod{q^{10},q^{15},q^{25}}{q^{75}}}
		\right.\nonumber\\&\quad\left.		
		+\frac{1}{5}\frac{q^2}{\jacprod{q^{10}}{q^{25}}}
		-q^2\frac{1}{\jacprod{q^{5},q^{25},q^{30}}{q^{75}}}
		-\frac{1}{5}q^3\frac{\jacprod{q^{5}}{q^{25}}}{\jacprod{q^{10}}{q^{25}}^2}
	\right)
	,\\
	\label{EqPP4Mod5Eq2}
	\frac{B}{\aqprod{q}{q}{\infty}}
	&=
	\aqprod{q^{25}}{q^{25}}{\infty}
	\left(
		\frac{1}{5}\frac{\jacprod{q^{10}}{q^{25}}}{\jacprod{q^5}{q^{25}}^2}
		-q^5\frac{\jacprod{q^{10}}{q^{75}}}{\jacprod{q^{5},q^{20},q^{25},q^{30}}{q^{75}}}
		+\frac{1}{5}q\frac{1}{\jacprod{q^5}{q^{25}}}
		+\frac{2}{5}\frac{q^2}{\jacprod{q^{10}}{q^{25}}}
		\right.\nonumber\\&\quad\left.
		-\frac{2}{5}q^3\frac{\jacprod{q^{5}}{q^{25}}}{\jacprod{q^{10}}{q^{25}}^2}
		-q^8\frac{\jacprod{q^5}{q^{75}}}{\jacprod{q^{10},q^{15},q^{25},q^{35}}{q^{75}}}
	\right)
.
\end{align}

By Lemma \ref{LemmaFor3b2}
\begin{align*}
	A 
	&=
	\frac{3}{5}+h(q^{50},q^{75}) - h(q^5,q^{75})
	+q^{8}\frac{\aqprod{q^{75}}{q^{75}}{\infty}^2\jacprod{q^{20}}{q^{75}}}
		{\jacprod{q^{15},q^{35}}{q^{75}}}
	-q^{12}\frac{\aqprod{q^{75}}{q^{75}}{\infty}^2\jacprod{q^{10}}{q^{75}}}
		{\jacprod{q^{20},q^{30}}{q^{75}}}
	\\&\quad
	-q^{2}\frac{\aqprod{q^{75}}{q^{75}}{\infty}^2\jacprod{q^{35}}{q^{75}}}
		{\jacprod{q^{5},q^{30}}{q^{75}}}
	+q^{8}\frac{\aqprod{q^{75}}{q^{75}}{\infty}^2\jacprod{q^{5}}{q^{75}}}
		{\jacprod{q^{10},q^{15}}{q^{75}}}
	-q^{2}\frac{\aqprod{q^{75}}{q^{75}}{\infty}^2\jacprod{q^{25}}{q^{75}}}
		{\jacprod{q^{10},q^{15}}{q^{75}}}
	\\&\quad
	-q^{10}\frac{\aqprod{q^{75}}{q^{75}}{\infty}^2\jacprod{q^{20}}{q^{75}}}
		{\jacprod{q^{25},q^{30}}{q^{75}}}
	+q^{14}\frac{\aqprod{q^{75}}{q^{75}}{\infty}^2\jacprod{q^{10}}{q^{75}}}
		{\jacprod{q^{30},q^{35}}{q^{75}}}
	+q^{4}\frac{\aqprod{q^{75}}{q^{75}}{\infty}^2\jacprod{q^{35}}{q^{75}}}
		{\jacprod{q^{15},q^{20}}{q^{75}}}
\end{align*}
and with this, after expanding 	
$h(q^5,q^{75})$ into products and replacing 
$h(q^{50},q^{75})$ with $V_{\chi,1}(\tau)$,
we find (\ref{EqPP4Mod5Eq1}) is equivalent to
\begin{align*}
	&\quad
	1
	-\frac{\GEta{75}{10}{\tau}\GEta{75}{15}{\tau}\GEta{75}{35}{\tau}}
		{\GEta{75}{20}{\tau}^{2}\GEta{75}{30}{\tau}}
	-\frac{\GEta{75}{15}{\tau}\GEta{75}{35}{\tau}^{2}}
		{\GEta{75}{5}{\tau}\GEta{75}{20}{\tau}\GEta{75}{30}{\tau}}
	+\frac{\GEta{75}{5}{\tau}\GEta{75}{35}{\tau}}
		{\GEta{75}{10}{\tau}\GEta{75}{20}{\tau}}
	\\&\quad
	-\frac{\GEta{75}{25}{\tau}\GEta{75}{35}{\tau}}
		{\GEta{75}{10}{\tau}\GEta{75}{20}{\tau}}
	-\frac{\GEta{75}{15}{\tau}\GEta{75}{35}{\tau}}
		{\GEta{75}{25}{\tau}\GEta{75}{30}{\tau}}
	+\frac{\GEta{75}{10}{\tau}\GEta{75}{15}{\tau}}
		{\GEta{75}{20}{\tau}\GEta{75}{30}{\tau}}
	\\&\quad
	+\frac{\GEta{75}{35}{\tau}^{2}}
		{\GEta{75}{20}{\tau}^{2}}
	+\frac{4}{15}q^{-3}\frac{\GEta{75}{5}{\tau}\GEta{75}{35}{\tau}}
		{\GEta{75}{10}{\tau}^{2}}
	+\frac{4}{15}\frac{\GEta{75}{10}{\tau}^{3}\GEta{75}{35}{\tau}}
		{\GEta{75}{5}{\tau}^{3}\GEta{75}{20}{\tau}}
	\\&\quad
	+\frac{2}{15}\frac{\GEta{75}{10}{\tau}\GEta{75}{15}{\tau}\GEta{75}{35}{\tau}^{2}}
		{\GEta{75}{20}{\tau}^{3}\GEta{75}{30}{\tau}}
	+\frac{2}{15}\frac{\GEta{75}{15}{\tau}\GEta{75}{20}{\tau}^{2}\GEta{75}{35}{\tau}}
		{\GEta{75}{10}{\tau}^{3}\GEta{75}{30}{\tau}}
	-\frac{1}{15}\frac{\GEta{75}{5}{\tau}}
		{\GEta{75}{35}{\tau}}
	\\&\quad
	+\frac{1}{15}\frac{\GEta{75}{35}{\tau}^{4}}
		{\GEta{75}{20}{\tau}^{4}}
	-\frac{1}{30}\frac{\GEta{75}{10}{\tau}\GEta{75}{15}{\tau}\GEta{75}{35}{\tau}^{2}}
		{\GEta{75}{5}{\tau}^{2}\GEta{75}{20}{\tau}\GEta{75}{30}{\tau}}
	+\frac{1}{30}q^{22}\frac{\GEta{75}{5}{\tau}^{3}\GEta{75}{15}{\tau}}
		{\GEta{75}{20}{\tau}\GEta{75}{30}{\tau}\GEta{75}{35}{\tau}^{2}}
	\\&\quad
	+\frac{\GEta{75}{15}{\tau}\GEta{75}{35}{\tau}\GEta{1}{0}{\tau}^{1/2}\GEta{3}{0}{\tau}^{1/2}}
		{\GEta{75}{30}{\tau}\GEta{5}{0}{\tau}}
		\frac{\eta(5\tau)^2}{\eta(\tau)\eta(3\tau)\eta(75\tau)^2}
		V_{\chi,1}(25\tau)
	\\
	&=
	\frac{3}{5}\frac{\GEta{1}{0}{\tau}^{1/2}\GEta{25}{10}{\tau}\GEta{75}{15}{\tau}\GEta{75}{25}{\tau}\GEta{75}{35}{\tau}}
		{\GEta{75}{0}{\tau}^{1/2}\GEta{25}{5}{\tau}^2\GEta{75}{20}{\tau}}
	-\frac{\GEta{1}{0}{\tau}^{1/2}\GEta{75}{10}{\tau}\GEta{75}{15}{\tau}\GEta{75}{35}{\tau}}
		{\GEta{75}{0}{\tau}^{1/2}\GEta{75}{5}{\tau}\GEta{75}{20}{\tau}^2\GEta{75}{30}{\tau}}
	\\&\quad
	-\frac{2}{5}\frac{\GEta{1}{0}{\tau}^{1/2}\GEta{75}{15}{\tau}\GEta{75}{25}{\tau}\GEta{75}{35}{\tau}}
		{\GEta{75}{0}{\tau}^{1/2}\GEta{25}{5}{\tau}\GEta{75}{20}{\tau}}
	+\frac{\GEta{1}{0}{\tau}^{1/2}\GEta{75}{35}{\tau}}
		{\GEta{75}{0}{\tau}^{1/2}\GEta{75}{10}{\tau}\GEta{75}{20}{\tau}}
	\\&\quad
	+\frac{1}{5}\frac{\GEta{1}{0}{\tau}^{1/2}\GEta{75}{15}{\tau}\GEta{75}{25}{\tau}\GEta{75}{35}{\tau}}
		{\GEta{75}{0}{\tau}^{1/2}\GEta{25}{10}{\tau}\GEta{75}{20}{\tau}}
	-\frac{\GEta{1}{0}{\tau}^{1/2}\GEta{75}{15}{\tau}\GEta{75}{35}{\tau}}
		{\GEta{75}{0}{\tau}^{1/2}\GEta{75}{5}{\tau}\GEta{75}{20}{\tau}\GEta{75}{30}{\tau}}
	\\&\quad
	-\frac{1}{5}\frac{\GEta{1}{0}{\tau}^{1/2}\GEta{25}{5}{\tau}\GEta{75}{15}{\tau}\GEta{75}{25}{\tau}\GEta{75}{35}{\tau}}
		{\GEta{75}{0}{\tau}^{1/2}\GEta{25}{10}{\tau}^2\GEta{75}{20}{\tau}}
.	
\end{align*}
However each term is a modular function with respect to $\Gamma_0(75)$, 
where we have treated the term with $V_{\chi,1}(25\tau)$ as before. This time
by the valence formula it suffices to verify the identity in the 
$q$-expansion past $q^{192}$.

Next we have by Lemma \ref{LemmaFor3b2}
\begin{align*}
	B
	&=
	\frac{1}{5} + h(q^{35},q^{75}) - h(q^{20},q^{75})
	+q^{11}\frac{\aqprod{q^{75}}{q^{75}}{\infty}^2\jacprod{q^{5}}{q^{75}}}
		{\jacprod{q^{15},q^{20}}{q^{75}}}
	-q^{3}\frac{\aqprod{q^{75}}{q^{75}}{\infty}^2\jacprod{q^{25}}{q^{75}}}
		{\jacprod{q^{5},q^{30}}{q^{75}}}
	\\&\quad
	+q^{6}\frac{\aqprod{q^{75}}{q^{75}}{\infty}^2\jacprod{q^{20}}{q^{75}}}
		{\jacprod{q^{10},q^{30}}{q^{75}}}
	-q^{10}\frac{\aqprod{q^{75}}{q^{75}}{\infty}^2\jacprod{q^{10}}{q^{75}}}
		{\jacprod{q^{15},q^{25}}{q^{75}}}
	+q^{4}\frac{\aqprod{q^{75}}{q^{75}}{\infty}^2\jacprod{q^{10}}{q^{75}}}
		{\jacprod{q^{5},q^{15}}{q^{75}}}
	\\&\quad
	-q^{4}\frac{\aqprod{q^{75}}{q^{75}}{\infty}^2\jacprod{q^{35}}{q^{75}}}
		{\jacprod{q^{10},q^{30}}{q^{75}}}
	-q^{15}\frac{\aqprod{q^{75}}{q^{75}}{\infty}^2\jacprod{q^{5}}{q^{75}}}
		{\jacprod{q^{25},q^{30}}{q^{75}}}
	+q^{7}\frac{\aqprod{q^{75}}{q^{75}}{\infty}^2\jacprod{q^{25}}{q^{75}}}
		{\jacprod{q^{15},q^{35}}{q^{75}}}
\end{align*}
and with this,
after expanding $h(q^{35},q^{75})$ and $h(q^{20},q^{75})$
into products,
we find (\ref{EqPP4Mod5Eq2})
to be equivalent to
\begin{align*}
	&\quad
	1
	-\frac{\GEta{75}{15}{\tau}\GEta{75}{20}{\tau}\GEta{75}{25}{\tau}}
		{\GEta{75}{5}{\tau}^{2}\GEta{75}{30}{\tau}}
	+\frac{\GEta{75}{15}{\tau}\GEta{75}{20}{\tau}^{2}}
		{\GEta{75}{5}{\tau}\GEta{75}{10}{\tau}\GEta{75}{30}{\tau}}
	-\frac{\GEta{75}{10}{\tau}\GEta{75}{20}{\tau}}
		{\GEta{75}{5}{\tau}\GEta{75}{25}{\tau}}
	\\&\quad
	+\frac{\GEta{75}{10}{\tau}\GEta{75}{20}{\tau}}
		{\GEta{75}{5}{\tau}^{2}}
	-\frac{\GEta{75}{15}{\tau}\GEta{75}{20}{\tau}\GEta{75}{35}{\tau}}
		{\GEta{75}{5}{\tau}\GEta{75}{10}{\tau}\GEta{75}{30}{\tau}}
	-\frac{\GEta{75}{15}{\tau}\GEta{75}{20}{\tau}}
		{\GEta{75}{25}{\tau}\GEta{75}{30}{\tau}}
	+\frac{\GEta{75}{20}{\tau}\GEta{75}{25}{\tau}}
		{\GEta{75}{5}{\tau}\GEta{75}{35}{\tau}}
	\\&\quad
	-\frac{3}{10}\frac{\GEta{75}{20}{\tau}^{2}}
		{\GEta{75}{35}{\tau}^{2}}
	+\frac{3}{10}\frac{\GEta{75}{35}{\tau}^{3}}
		{\GEta{75}{5}{\tau}\GEta{75}{20}{\tau}^{2}}
	-\frac{2}{5}\frac{\GEta{75}{10}{\tau}\GEta{75}{15}{\tau}\GEta{75}{20}{\tau}\GEta{75}{35}{\tau}}
		{\GEta{75}{5}{\tau}^{3}\GEta{75}{30}{\tau}}
	\\&\quad
	+\frac{2}{5}\frac{\GEta{75}{5}{\tau}^{2}\GEta{75}{15}{\tau}\GEta{75}{20}{\tau}}
		{\GEta{75}{30}{\tau}\GEta{75}{35}{\tau}^{3}}
	+\frac{1}{5}\frac{\GEta{75}{20}{\tau}^{2}}
		{\GEta{75}{10}{\tau}^{2}}
	+\frac{1}{5}\frac{\GEta{75}{10}{\tau}^{3}\GEta{75}{20}{\tau}}
		{\GEta{75}{5}{\tau}^{4}}
	\\&\quad
	+\frac{1}{10}\frac{\GEta{75}{10}{\tau}\GEta{75}{15}{\tau}\GEta{75}{35}{\tau}}
		{\GEta{75}{5}{\tau}\GEta{75}{20}{\tau}\GEta{75}{30}{\tau}}
	+\frac{1}{10}\frac{\GEta{75}{15}{\tau}\GEta{75}{20}{\tau}^{4}}
		{\GEta{75}{5}{\tau}\GEta{75}{10}{\tau}^{3}\GEta{75}{30}{\tau}}
	\\
	&=	
	\frac{1}{5}\frac{\GEta{1}{0}{\tau}^{1/2}\GEta{25}{10}{\tau}\GEta{75}{15}{\tau}\GEta{75}{25}{\tau}\GEta{75}{20}{\tau}}
		{\GEta{75}{0}{\tau}^{1/2}\GEta{25}{5}{\tau}^2\GEta{75}{5}{\tau}}
	-\frac{\GEta{1}{0}{\tau}^{1/2}\GEta{75}{10}{\tau}\GEta{75}{15}{\tau}}
		{\GEta{75}{0}{\tau}^{1/2}\GEta{75}{5}{\tau}^2\GEta{75}{30}{\tau}}
	\\&\quad
	+\frac{1}{5}\frac{\GEta{1}{0}{\tau}^{1/2}\GEta{75}{15}{\tau}\GEta{75}{25}{\tau}\GEta{75}{20}{\tau}}
		{\GEta{75}{0}{\tau}^{1/2}\GEta{25}{5}{\tau}\GEta{75}{5}{\tau}}
	+\frac{2}{5}\frac{\GEta{1}{0}{\tau}^{1/2}\GEta{75}{15}{\tau}\GEta{75}{25}{\tau}\GEta{75}{20}{\tau}}
		{\GEta{75}{0}{\tau}^{1/2}\GEta{25}{10}{\tau}\GEta{75}{5}{\tau}}
	\\&\quad
	-\frac{2}{5}\frac{\GEta{1}{0}{\tau}^{1/2}\GEta{25}{5}{\tau}\GEta{75}{15}{\tau}\GEta{75}{25}{\tau}\GEta{75}{20}{\tau}}
		{\GEta{75}{0}{\tau}^{1/2}\GEta{25}{10}{\tau}^2\GEta{75}{5}{\tau}}
	-\frac{\GEta{1}{0}{\tau}^{1/2}\GEta{75}{20}{\tau}}
		{\GEta{75}{0}{\tau}^{1/2}\GEta{75}{10}{\tau}\GEta{75}{35}{\tau}}
.	
\end{align*}
Again each term is a modular function with respect to $\Gamma_1(75)$
and we need only verify the $q$-expansion past $q^{189}$.
Thus Theorem \ref{TheoremPP45} holds.

\section{Proof of Theorem \ref{TheoremPP37} }

First we set
\begin{align*}
	A &=\frac{5}{7} + V_7^3(2)- V_7^3(17) -qU_7^3(4)+q^6U_7^3(19)
	,\\
	B &=\frac{3}{7} + V_7^3(5)- V_7^3(14) -q^2U_7^3(7)+q^5U_7^3(16)
	,\\
	C &=\frac{1}{7} + V_7^3(8)- V_7^3(11) -q^3U_7^3(10)+q^4U_7^3(13)
.
\end{align*}
By (\ref{PP3Zeta7}) the left hand side of Theorem \ref{TheoremPP37} is 
$\frac{A+(\zeta_7+\zeta_7^6)B+(1+\zeta_7^2+\zeta_7^5)C}{\aqprod{q}{q}{\infty}}$,
so we must show
\begin{align}\label{EqPP3Mod7Eq1}
	\frac{A}{\aqprod{q}{q}{\infty}}
	&=
	\aqprod{q^{49}}{q^{49}}{\infty}
	\left(
		\frac{5}{7}\frac{\jacprod{q^{21}}{q^{49}}}{\jacprod{q^7,q^{14}}{q^{49}}}
		-q^{14}\frac{1}{\jacprod{q^{42},q^{49},q^{56}}{q^{147}}}
		-\frac{2}{7}q\frac{1}{\jacprod{q^7}{q^{49}}}
		+\frac{3}{7}q^2\frac{\jacprod{q^{14}}{q^{49}}}{\jacprod{q^7,q^{21}}{q^{49}}}
		\right.\nonumber\\&\quad\left.		
		+\frac{1}{7}q^3\frac{1}{\jacprod{q^{14}}{q^{49}}}
		-\frac{3}{7}q^4\frac{1}{\jacprod{q^{21}}{q^{49}}}
		-\frac{1}{7}q^6\frac{\jacprod{q^{7}}{q^{49}}}{\jacprod{q^{14},q^{21}}{q^{49}}}
		+q^{6}\frac{1}{\jacprod{q^{14},q^{49},q^{63}}{q^{147}}}
	\right)
	,\\
	\label{EqPP3Mod7Eq2}
	\frac{B}{\aqprod{q}{q}{\infty}}
	&=
	\aqprod{q^{49}}{q^{49}}{\infty}
	\left(
		\frac{3}{7}\frac{\jacprod{q^{21}}{q^{49}}}{\jacprod{q^7,q^{14}}{q^{49}}}
		-q^{14}\frac{1}{\jacprod{q^{42},q^{49},q^{56}}{q^{147}}}
		+\frac{3}{7}q\frac{1}{\jacprod{q^7}{q^{49}}}
		-\frac{1}{7}q^2\frac{\jacprod{q^{14}}{q^{49}}}{\jacprod{q^7,q^{21}}{q^{49}}}
		\right.\nonumber\\&\quad\left.		
		+\frac{2}{7}q^3\frac{1}{\jacprod{q^{14}}{q^{49}}}
		+\frac{1}{7}q^4\frac{1}{\jacprod{q^{21}}{q^{49}}}
		+q^5\frac{\jacprod{q^{35}}{q^{147}}}{\jacprod{q^{21},q^{28},q^{49},q^{49}}{q^{147}}}
		+q^{19}\frac{\jacprod{q^{14}}{q^{147}}}{\jacprod{q^{21},q^{49},q^{49},q^{70}}{q^{147}}}
		\right.\nonumber\\&\quad\left.
		-\frac{2}{7}q^6\frac{\jacprod{q^{7}}{q^{49}}}{\jacprod{q^{14},q^{21}}{q^{49}}}
	\right)
	,\\
	\label{EqPP3Mod7Eq3}
	\frac{C}{\aqprod{q}{q}{\infty}}
	&=
	\aqprod{q^{49}}{q^{49}}{\infty}
	\left(
		\frac{1}{7}\frac{\jacprod{q^{21}}{q^{49}}}{\jacprod{q^7,q^{14}}{q^{49}}}
		+\frac{1}{7}q\frac{1}{\jacprod{q^7}{q^{49}}}
		+\frac{2}{7}q^2\frac{\jacprod{q^{14}}{q^{49}}}{\jacprod{q^7,q^{21}}{q^{49}}}
		-q^{9}\frac{1}{\jacprod{q^{21},q^{49},q^{70}}{q^{147}}}
		\right.\nonumber\\&\quad\left.		
		-\frac{4}{7}q^3\frac{1}{\jacprod{q^{14}}{q^{49}}}
		+\frac{5}{7}q^4\frac{1}{\jacprod{q^{21}}{q^{49}}}
		-\frac{3}{7}q^6\frac{\jacprod{q^{7}}{q^{49}}}{\jacprod{q^{14},q^{21}}{q^{49}}}
		+q^{6}\frac{1}{\jacprod{q^{14},q^{49},q^{63}}{q^{147}}}
	\right)
.
\end{align}

We have
\begin{align*}
	A 
	&=
	\frac{5}{7} + h(q^{133},q^{147}) - h(q^{28},q^{147})
	+q^{5}\frac{\aqprod{q^{147}}{q^{147}}{\infty}^2\jacprod{q^{56}}{q^{147}}}{\jacprod{q^{21},q^{35}}{q^{147}}}
	+q^{16}\frac{\aqprod{q^{147}}{q^{147}}{\infty}^2\jacprod{q^{49}}{q^{147}}}{\jacprod{q^{42},q^{56}}{q^{147}}}
	\\&\quad
	+q^{33}\frac{\aqprod{q^{147}}{q^{147}}{\infty}^2\jacprod{q^{7}}{q^{147}}}{\jacprod{q^{63},q^{70}}{q^{147}}}
	-q^{21}\frac{\aqprod{q^{147}}{q^{147}}{\infty}^2\jacprod{q^{35}}{q^{147}}}{\jacprod{q^{49},q^{63}}{q^{147}}}
	-q^{8}\frac{\aqprod{q^{147}}{q^{147}}{\infty}^2\jacprod{q^{70}}{q^{147}}}{\jacprod{q^{28},q^{42}}{q^{147}}}
	\\&\quad
	-q\frac{\aqprod{q^{147}}{q^{147}}{\infty}^2\jacprod{q^{28}}{q^{147}}}{\jacprod{q^{7},q^{21}}{q^{147}}}
	+q^{6}\frac{\aqprod{q^{147}}{q^{147}}{\infty}^2\jacprod{q^{14}}{q^{147}}}{\jacprod{q^{7},q^{21}}{q^{147}}}
	-q^{4}\frac{\aqprod{q^{147}}{q^{147}}{\infty}^2\jacprod{q^{56}}{q^{147}}}{\jacprod{q^{14},q^{42}}{q^{147}}}
	\\&\quad
	-q^{15}\frac{\aqprod{q^{147}}{q^{147}}{\infty}^2\jacprod{q^{49}}{q^{147}}}{\jacprod{q^{35},q^{63}}{q^{147}}}
	-q^{32}\frac{\aqprod{q^{147}}{q^{147}}{\infty}^2\jacprod{q^{7}}{q^{147}}}{\jacprod{q^{56},q^{63}}{q^{147}}}
	+q^{20}\frac{\aqprod{q^{147}}{q^{147}}{\infty}^2\jacprod{q^{35}}{q^{147}}}{\jacprod{q^{42},q^{70}}{q^{147}}}
	\\&\quad
	+q^{7}\frac{\aqprod{q^{147}}{q^{147}}{\infty}^2\jacprod{q^{70}}{q^{147}}}{\jacprod{q^{21},q^{49}}{q^{147}}}
	,\\
	B
	&=
	\frac{3}{7} + h(q^{112},q^{147}) - h(q^{49},q^{147})
	+q^{8}\frac{\aqprod{q^{147}}{q^{147}}{\infty}^2\jacprod{q^{70}}{q^{147}}}{\jacprod{q^{21},q^{56}}{q^{147}}}
	+q^{22}\frac{\aqprod{q^{147}}{q^{147}}{\infty}^2\jacprod{q^{28}}{q^{147}}}{\jacprod{q^{42},q^{70}}{q^{147}}}
	\\&\quad
	-q^{28}\frac{\aqprod{q^{147}}{q^{147}}{\infty}^2\jacprod{q^{14}}{q^{147}}}{\jacprod{q^{49},q^{63}}{q^{147}}}
	-q^{12}\frac{\aqprod{q^{147}}{q^{147}}{\infty}^2\jacprod{q^{56}}{q^{147}}}{\jacprod{q^{28},q^{63}}{q^{147}}}
	-q^{2}\frac{\aqprod{q^{147}}{q^{147}}{\infty}^2\jacprod{q^{49}}{q^{147}}}{\jacprod{q^{7},q^{42}}{q^{147}}}
	\\&\quad
	+q^{12}\frac{\aqprod{q^{147}}{q^{147}}{\infty}^2\jacprod{q^{7}}{q^{147}}}{\jacprod{q^{14},q^{21}}{q^{147}}}
	-q^{17}\frac{\aqprod{q^{147}}{q^{147}}{\infty}^2\jacprod{q^{7}}{q^{147}}}{\jacprod{q^{21},q^{28}}{q^{147}}}
	+q^{5}\frac{\aqprod{q^{147}}{q^{147}}{\infty}^2\jacprod{q^{35}}{q^{147}}}{\jacprod{q^{7},q^{42}}{q^{147}}}	
	\\&\quad
	-q^{6}\frac{\aqprod{q^{147}}{q^{147}}{\infty}^2\jacprod{q^{70}}{q^{147}}}{\jacprod{q^{14},q^{63}}{q^{147}}}
	-q^{20}\frac{\aqprod{q^{147}}{q^{147}}{\infty}^2\jacprod{q^{28}}{q^{147}}}{\jacprod{q^{35},q^{63}}{q^{147}}}
	+q^{26}\frac{\aqprod{q^{147}}{q^{147}}{\infty}^2\jacprod{q^{14}}{q^{147}}}{\jacprod{q^{42},q^{56}}{q^{147}}}
	\\&\quad
	+q^{10}\frac{\aqprod{q^{147}}{q^{147}}{\infty}^2\jacprod{q^{56}}{q^{147}}}{\jacprod{q^{21},q^{70}}{q^{147}}}
	,\\
	C
	&=
	\frac{1}{7} + h(q^{91},q^{147}) - h(q^{70},q^{147})
	+q^{11}\frac{\aqprod{q^{147}}{q^{147}}{\infty}^2\jacprod{q^{49}}{q^{147}}}{\jacprod{q^{21},q^{70}}{q^{147}}}
	+q^{28}\frac{\aqprod{q^{147}}{q^{147}}{\infty}^2\jacprod{q^{7}}{q^{147}}}{\jacprod{q^{42},q^{49}}{q^{147}}}
	\\&\quad
	-q^{16}\frac{\aqprod{q^{147}}{q^{147}}{\infty}^2\jacprod{q^{35}}{q^{147}}}{\jacprod{q^{28},q^{63}}{q^{147}}}
	-q^{3}\frac{\aqprod{q^{147}}{q^{147}}{\infty}^2\jacprod{q^{70}}{q^{147}}}{\jacprod{q^{7},q^{63}}{q^{147}}}
	+q^{10}\frac{\aqprod{q^{147}}{q^{147}}{\infty}^2\jacprod{q^{28}}{q^{147}}}{\jacprod{q^{14},q^{42}}{q^{147}}}
	\\&\quad
	-q^{16}\frac{\aqprod{q^{147}}{q^{147}}{\infty}^2\jacprod{q^{14}}{q^{147}}}{\jacprod{q^{21},q^{35}}{q^{147}}}
	-q^{14}\frac{\aqprod{q^{147}}{q^{147}}{\infty}^2\jacprod{q^{28}}{q^{147}}}{\jacprod{q^{21},q^{49}}{q^{147}}}
	+q^{20}\frac{\aqprod{q^{147}}{q^{147}}{\infty}^2\jacprod{q^{14}}{q^{147}}}{\jacprod{q^{28},q^{42}}{q^{147}}}
	\\&\quad
	+q^{4}\frac{\aqprod{q^{147}}{q^{147}}{\infty}^2\jacprod{q^{56}}{q^{147}}}{\jacprod{q^{7},q^{63}}{q^{147}}}
	-q^{8}\frac{\aqprod{q^{147}}{q^{147}}{\infty}^2\jacprod{q^{49}}{q^{147}}}{\jacprod{q^{14},q^{63}}{q^{147}}}
	-q^{25}\frac{\aqprod{q^{147}}{q^{147}}{\infty}^2\jacprod{q^{7}}{q^{147}}}{\jacprod{q^{35},q^{42}}{q^{147}}}
	\\&\quad
	+q^{13}\frac{\aqprod{q^{147}}{q^{147}}{\infty}^2\jacprod{q^{35}}{q^{147}}}{\jacprod{q^{21},q^{56}}{q^{147}}}
.
\end{align*}

By expanding
$h(q^{133},q^{147})$ and $h(q^{28},q^{147})$
into products with (\ref{EqHToProducts}) and
then
multiplying both sides of (\ref{EqPP3Mod7Eq1}) by 
$\frac{q^{-5}\jacprod{q^{21},q^{35}}{q^{147}}}{\jacprod{56}{q^{147}}}$
we find it is equivalent to proving
\begin{align*}
	&\quad 1
	+\frac{\GEta{147}{21}{\tau}\GEta{147}{35}{\tau}\GEta{147}{49}{\tau}}
		{\GEta{147}{42}{\tau}\GEta{147}{56}{\tau}^{2}}
	+\frac{\GEta{147}{7}{\tau}\GEta{147}{21}{\tau}\GEta{147}{35}{\tau}}
		{\GEta{147}{56}{\tau}\GEta{147}{63}{\tau}\GEta{147}{70}{\tau}}
	-\frac{\GEta{147}{21}{\tau}\GEta{147}{35}{\tau}^{2}}
		{\GEta{147}{49}{\tau}\GEta{147}{56}{\tau}\GEta{147}{63}{\tau}}
	\\&\quad
	-\frac{\GEta{147}{21}{\tau}\GEta{147}{35}{\tau}\GEta{147}{70}{\tau}}
		{\GEta{147}{28}{\tau}\GEta{147}{42}{\tau}\GEta{147}{56}{\tau}}
	-\frac{\GEta{147}{28}{\tau}\GEta{147}{35}{\tau}}
		{\GEta{147}{7}{\tau}\GEta{147}{56}{\tau}}
	+\frac{\GEta{147}{14}{\tau}\GEta{147}{35}{\tau}}
		{\GEta{147}{7}{\tau}\GEta{147}{56}{\tau}}
	\\&\quad	
	-\frac{\GEta{147}{21}{\tau}\GEta{147}{35}{\tau}}
		{\GEta{147}{14}{\tau}\GEta{147}{42}{\tau}}
	-\frac{\GEta{147}{21}{\tau}\GEta{147}{49}{\tau}}
		{\GEta{147}{56}{\tau}\GEta{147}{63}{\tau}}
	-\frac{\GEta{147}{7}{\tau}\GEta{147}{21}{\tau}\GEta{147}{35}{\tau}}
		{\GEta{147}{56}{\tau}^{2}\GEta{147}{63}{\tau}}
	\\&\quad
	+\frac{\GEta{147}{21}{\tau}\GEta{147}{35}{\tau}^{2}}
		{\GEta{147}{42}{\tau}\GEta{147}{56}{\tau}\GEta{147}{70}{\tau}}
	+\frac{\GEta{147}{35}{\tau}\GEta{147}{70}{\tau}}
		{\GEta{147}{49}{\tau}\GEta{147}{56}{\tau}}
	\\&\quad
	+\frac{11}{42}\frac{\GEta{147}{14}{\tau}\GEta{147}{21}{\tau}\GEta{147}{35}{\tau}}
		{\GEta{147}{28}{\tau}^{2}\GEta{147}{42}{\tau}}
	+\frac{8}{21}\frac{\GEta{147}{21}{\tau}\GEta{147}{28}{\tau}\GEta{147}{35}{\tau}^{2}}
		{\GEta{147}{56}{\tau}^{3}\GEta{147}{63}{\tau}}
	+\frac{4}{21}\frac{\GEta{147}{70}{\tau}}
		{\GEta{147}{35}{\tau}}
	\\&\quad
	-\frac{2}{21}\frac{\GEta{147}{7}{\tau}\GEta{147}{21}{\tau}\GEta{147}{35}{\tau}^{2}}
		{\GEta{147}{42}{\tau}\GEta{147}{56}{\tau}\GEta{147}{70}{\tau}^{2}}
	-\frac{1}{21}\frac{\GEta{147}{14}{\tau}\GEta{147}{21}{\tau}\GEta{147}{35}{\tau}\GEta{147}{70}{\tau}}
		{\GEta{147}{7}{\tau}^{2}\GEta{147}{56}{\tau}\GEta{147}{63}{\tau}}
	\\&\quad
	+\frac{1}{42}\frac{\GEta{147}{7}{\tau}\GEta{147}{28}{\tau}\GEta{147}{35}{\tau}}
		{\GEta{147}{14}{\tau}^{2}\GEta{147}{56}{\tau}}
	+\frac{11}{42}\frac{\GEta{147}{21}{\tau}\GEta{147}{28}{\tau}^{3}\GEta{147}{35}{\tau}}
		{\GEta{147}{14}{\tau}^{3}\GEta{147}{42}{\tau}\GEta{147}{56}{\tau}}
	\\&\quad
	+\frac{8}{21}\frac{\GEta{147}{21}{\tau}\GEta{147}{35}{\tau}\GEta{147}{56}{\tau}^{2}}
		{\GEta{147}{28}{\tau}^{3}\GEta{147}{63}{\tau}}
	-\frac{4}{21}\frac{\GEta{147}{35}{\tau}^{4}}
		{\GEta{147}{56}{\tau}^{4}}
	-\frac{2}{21}\frac{\GEta{147}{21}{\tau}\GEta{147}{70}{\tau}^{3}}
		{\GEta{147}{35}{\tau}^{2}\GEta{147}{42}{\tau}\GEta{147}{56}{\tau}}
	\\&\quad
	+\frac{1}{21}\frac{\GEta{147}{7}{\tau}^{3}\GEta{147}{21}{\tau}\GEta{147}{35}{\tau}}
		{\GEta{147}{56}{\tau}\GEta{147}{63}{\tau}\GEta{147}{70}{\tau}^{3}}
	+\frac{1}{42}\frac{\GEta{147}{14}{\tau}^{3}\GEta{147}{35}{\tau}}
		{\GEta{147}{7}{\tau}^{3}\GEta{147}{56}{\tau}}
	\\
	&=
	\frac{5}{7}
	\frac{\GEta{1}{0}{\tau}^{1/2}\GEta{147}{21}{\tau}\GEta{147}{35}{\tau}\GEta{147}{49}{\tau}\GEta{49}{21}{\tau}}
		{\GEta{147}{0}{\tau}^{1/2}\GEta{147}{56}{\tau}\GEta{49}{7}{\tau}\GEta{49}{14}{\tau}}
	-\frac{\GEta{1}{0}{\tau}^{1/2}\GEta{147}{21}{\tau}\GEta{147}{35}{\tau}}
		{\GEta{147}{0}{\tau}^{1/2}\GEta{147}{56}{\tau}\GEta{147}{42}{\tau}\GEta{147}{56}{\tau}}
	\\&\quad
	-\frac{2}{7}
	\frac{\GEta{1}{0}{\tau}^{1/2}\GEta{147}{21}{\tau}\GEta{147}{35}{\tau}\GEta{147}{49}{\tau}}
		{\GEta{147}{0}{\tau}^{1/2}\GEta{147}{56}{\tau}\GEta{49}{7}{\tau}}
	+\frac{3}{7}
	\frac{\GEta{1}{0}{\tau}^{1/2}\GEta{147}{21}{\tau}\GEta{147}{35}{\tau}\GEta{147}{49}{\tau}\GEta{49}{14}{\tau}}
		{\GEta{147}{0}{\tau}^{1/2}\GEta{147}{56}{\tau}\GEta{49}{7}{\tau}\GEta{49}{21}{\tau}}
	\\&\quad
	+\frac{1}{7}
	\frac{\GEta{1}{0}{\tau}^{1/2}\GEta{147}{21}{\tau}\GEta{147}{35}{\tau}\GEta{147}{49}{\tau}}
		{\GEta{147}{0}{\tau}^{1/2}\GEta{147}{56}{\tau}\GEta{49}{14}{\tau}}
	-\frac{3}{7}
	\frac{\GEta{1}{0}{\tau}^{1/2}\GEta{147}{21}{\tau}\GEta{147}{35}{\tau}\GEta{147}{49}{\tau}}
		{\GEta{147}{0}{\tau}^{1/2}\GEta{147}{56}{\tau}\GEta{49}{21}{\tau}}
	\\&\quad
	-\frac{1}{7}
	\frac{\GEta{1}{0}{\tau}^{1/2}\GEta{147}{21}{\tau}\GEta{147}{35}{\tau}\GEta{147}{49}{\tau}\GEta{49}{7}{\tau}}
		{\GEta{147}{0}{\tau}^{1/2}\GEta{147}{56}{\tau}\GEta{49}{14}{\tau}\GEta{49}{21}{\tau}}
	+\frac{\GEta{1}{0}{\tau}^{1/2}\GEta{147}{21}{\tau}\GEta{147}{35}{\tau}}
		{\GEta{147}{0}{\tau}^{1/2}\GEta{147}{56}{\tau}\GEta{147}{14}{\tau}\GEta{147}{63}{\tau}}
.
\end{align*}
Each term is a modular function with respect to $\Gamma_1(147)$ and by the 
valence formula it suffices to verify the identity in the $q$-expansion past
$q^{779}$.

By expanding
$h(q^{112},q^{147})$ into products and
replacing $h(q^{49},q^{147})$ with $V_{\chi,1}(49\tau)$
and then
multiplying both sides by 
$q^{-8}\frac{\jacprod{q^{21},q^{56}}{q^{147}}}{\aqprod{q^{147}}{q^{147}}{\infty}^2\jacprod{q^{70}}{q^{147}}}$
, we find (\ref{EqPP3Mod7Eq2}) is equivalent to
\begin{align*}
	&\quad 1
	+\frac{\GEta{147}{21}{\tau}\GEta{147}{28}{\tau}\GEta{147}{56}{\tau}}
		{\GEta{147}{42}{\tau}\GEta{147}{70}{\tau}^{2}}
	-\frac{\GEta{147}{14}{\tau}\GEta{147}{21}{\tau}\GEta{147}{56}{\tau}}
		{\GEta{147}{49}{\tau}\GEta{147}{63}{\tau}\GEta{147}{70}{\tau}}
	-\frac{\GEta{147}{21}{\tau}\GEta{147}{56}{\tau}^{2}}
		{\GEta{147}{28}{\tau}\GEta{147}{63}{\tau}\GEta{147}{70}{\tau}}
	\\&\quad
	-\frac{\GEta{147}{21}{\tau}\GEta{147}{49}{\tau}\GEta{147}{56}{\tau}}
		{\GEta{147}{7}{\tau}\GEta{147}{42}{\tau}\GEta{147}{70}{\tau}}
	+\frac{\GEta{147}{7}{\tau}\GEta{147}{56}{\tau}}
		{\GEta{147}{14}{\tau}\GEta{147}{70}{\tau}}
	-\frac{\GEta{147}{7}{\tau}\GEta{147}{56}{\tau}}
		{\GEta{147}{28}{\tau}\GEta{147}{70}{\tau}}
	\\&\quad
	+\frac{\GEta{147}{21}{\tau}\GEta{147}{35}{\tau}\GEta{147}{56}{\tau}}
		{\GEta{147}{7}{\tau}\GEta{147}{42}{\tau}\GEta{147}{70}{\tau}}
	-\frac{\GEta{147}{21}{\tau}\GEta{147}{56}{\tau}}
		{\GEta{147}{14}{\tau}\GEta{147}{63}{\tau}}
	-\frac{\GEta{147}{21}{\tau}\GEta{147}{28}{\tau}\GEta{147}{56}{\tau}}
		{\GEta{147}{35}{\tau}\GEta{147}{63}{\tau}\GEta{147}{70}{\tau}}
	\\&\quad
	+\frac{\GEta{147}{14}{\tau}\GEta{147}{21}{\tau}}
		{\GEta{147}{42}{\tau}\GEta{147}{70}{\tau}}
	+\frac{\GEta{147}{56}{\tau}^{2}}
		{\GEta{147}{70}{\tau}^{2}}
	+\frac{16}{63}\frac{\GEta{147}{7}{\tau}\GEta{147}{21}{\tau}\GEta{147}{35}{\tau}\GEta{147}{56}{\tau}}
		{\GEta{147}{42}{\tau}\GEta{147}{70}{\tau}^{3}}
	\\&\quad
	+\frac{8}{63}\frac{\GEta{147}{14}{\tau}\GEta{147}{21}{\tau}\GEta{147}{56}{\tau}}
		{\GEta{147}{7}{\tau}^{2}\GEta{147}{63}{\tau}}
	-\frac{4}{63}\frac{\GEta{147}{7}{\tau}\GEta{147}{28}{\tau}\GEta{147}{56}{\tau}}
		{\GEta{147}{14}{\tau}^{2}\GEta{147}{70}{\tau}}
	\\&\quad
	-\frac{2}{63}\frac{\GEta{147}{14}{\tau}\GEta{147}{21}{\tau}\GEta{147}{56}{\tau}^{2}}
		{\GEta{147}{28}{\tau}^{2}\GEta{147}{42}{\tau}\GEta{147}{70}{\tau}}
	-\frac{1}{63}\frac{\GEta{147}{21}{\tau}\GEta{147}{28}{\tau}\GEta{147}{35}{\tau}}
		{\GEta{147}{56}{\tau}\GEta{147}{63}{\tau}\GEta{147}{70}{\tau}}
	-\frac{1}{126}\frac{\GEta{147}{56}{\tau}^{2}}
		{\GEta{147}{35}{\tau}^{2}}
	\\&\quad
	+\frac{16}{63}\frac{\GEta{147}{21}{\tau}\GEta{147}{56}{\tau}\GEta{147}{70}{\tau}^{2}}
		{\GEta{147}{35}{\tau}^{3}\GEta{147}{42}{\tau}}
	-\frac{8}{63}\frac{\GEta{147}{7}{\tau}^{3}\GEta{147}{21}{\tau}\GEta{147}{56}{\tau}}
		{\GEta{147}{63}{\tau}\GEta{147}{70}{\tau}^{4}}
	-\frac{4}{63}\frac{\GEta{147}{14}{\tau}^{3}\GEta{147}{56}{\tau}}
		{\GEta{147}{7}{\tau}^{3}\GEta{147}{70}{\tau}}
	\\&\quad
	-\frac{2}{63}\frac{\GEta{147}{21}{\tau}\GEta{147}{28}{\tau}^{3}\GEta{147}{56}{\tau}}
		{\GEta{147}{14}{\tau}^{3}\GEta{147}{42}{\tau}\GEta{147}{70}{\tau}}
	-\frac{1}{63}\frac{\GEta{147}{21}{\tau}\GEta{147}{56}{\tau}^{4}}
		{\GEta{147}{28}{\tau}^{3}\GEta{147}{63}{\tau}\GEta{147}{70}{\tau}}
	+\frac{1}{126}\frac{\GEta{147}{35}{\tau}^{3}}
		{\GEta{147}{56}{\tau}^{2}\GEta{147}{70}{\tau}}
	\\&\quad
	+
	\frac{\GEta{147}{21}{\tau}\GEta{147}{56}{\tau}}{\GEta{147}{0}{\tau}\GEta{147}{70}{\tau}}	
		V_{\chi,1}(49\tau)
	\\
	&=
	\frac{3}{7}
	\frac{\GEta{1}{0}{\tau}^{1/2}\GEta{147}{21}{\tau}\GEta{147}{56}{\tau}\GEta{147}{49}{\tau}\GEta{49}{21}{\tau}}
		{\GEta{147}{0}{\tau}^{1/2}\GEta{147}{70}{\tau}\GEta{49}{7}{\tau}\GEta{49}{14}{\tau}}
	-\frac{\GEta{1}{0}{\tau}^{1/2}\GEta{147}{21}{\tau}\GEta{147}{56}{\tau}}
		{\GEta{147}{0}{\tau}^{1/2}\GEta{147}{70}{\tau}\GEta{147}{42}{\tau}\GEta{147}{56}{\tau}}
	\\&\quad
	+\frac{3}{7}
	\frac{\GEta{1}{0}{\tau}^{1/2}\GEta{147}{21}{\tau}\GEta{147}{56}{\tau}\GEta{147}{49}{\tau}}
		{\GEta{147}{0}{\tau}^{1/2}\GEta{147}{70}{\tau}\GEta{49}{7}{\tau}}
	-\frac{1}{7}
	\frac{\GEta{1}{0}{\tau}^{1/2}\GEta{147}{21}{\tau}\GEta{147}{56}{\tau}\GEta{147}{49}{\tau}\GEta{49}{14}{\tau}}
		{\GEta{147}{0}{\tau}^{1/2}\GEta{147}{70}{\tau}\GEta{49}{7}{\tau}\GEta{49}{21}{\tau}}
	\\&\quad
	+\frac{2}{7}
	\frac{\GEta{1}{0}{\tau}^{1/2}\GEta{147}{21}{\tau}\GEta{147}{56}{\tau}\GEta{147}{49}{\tau}}
		{\GEta{147}{0}{\tau}^{1/2}\GEta{147}{70}{\tau}\GEta{49}{14}{\tau}}
	+\frac{1}{7}
	\frac{\GEta{1}{0}{\tau}^{1/2}\GEta{147}{21}{\tau}\GEta{147}{56}{\tau}\GEta{147}{49}{\tau}}
		{\GEta{147}{0}{\tau}^{1/2}\GEta{147}{70}{\tau}\GEta{49}{21}{\tau}}
	\\&\quad
	+\frac{\GEta{1}{0}{\tau}^{1/2}\GEta{147}{21}{\tau}\GEta{147}{56}{\tau}}
		{\GEta{147}{0}{\tau}^{1/2}\GEta{147}{70}{\tau}\GEta{147}{21}{\tau}\GEta{147}{28}{\tau}\GEta{147}{49}{\tau}}
	+\frac{\GEta{1}{0}{\tau}^{1/2}\GEta{147}{21}{\tau}\GEta{147}{56}{\tau}}
		{\GEta{147}{0}{\tau}^{1/2}\GEta{147}{70}{\tau}^2\GEta{147}{21}{\tau}\GEta{147}{49}{\tau}}
	\\&\quad
	-\frac{2}{7}
	\frac{\GEta{1}{0}{\tau}^{1/2}\GEta{147}{21}{\tau}\GEta{147}{56}{\tau}\GEta{147}{49}{\tau}\GEta{49}{7}{\tau}}
		{\GEta{147}{0}{\tau}^{1/2}\GEta{147}{70}{\tau}\GEta{49}{14}{\tau}\GEta{49}{21}{\tau}}
.
\end{align*}
While it is true that the term involving $V_{\chi,1}(49\tau)$ is a modular function with
respect $\Gamma_1(147)$, it is not immediately apparent as in the 
$5$-dissections. However, we can write
\begin{align*}
	\frac{\GEta{147}{21}{\tau}\GEta{147}{56}{\tau}}
		{\GEta{147}{70}{\tau}\GEta{147}{21}{\tau}\GEta{147}{0}{\tau}}
	&=
	\frac{\GEta{147}{21}{\tau}\GEta{147}{56}{\tau}\GEta{1}{0}{\tau}^{1/2}}
		{\GEta{147}{70}{\tau}\GEta{147}{21}{\tau}\GEta{7}{0}{\tau}^{1/2}}
	\cdot
	\frac{\eta(7\tau)}{\eta(\tau)\eta(147)^2}
,
\end{align*}
to see a modular function and meromorphic modular form of weight $-1$
with respect to $\Gamma_1(447)$. Instead working with this larger congruence
subgroup, we must verify the identity in the $q$-expansion past
$q^{7804}$.

By expanding 
$h(q^{91},q^{147})$ and $h(q^{70},q^{147})$
into products and then
multiplying by 
$\frac{ q^{-11}\jacprod{q^{21},q^{70}}{q^{147}} }
{\aqprod{q^{147}}{q^{147}}{\infty}^2\jacprod{q^{49}}{q^{147}}}$,
we find (\ref{EqPP3Mod7Eq3}) is equivalent to 
\begin{align*}
	&\quad 1
	+\frac{\GEta{147}{7}{\tau}\GEta{147}{21}{\tau}\GEta{147}{70}{\tau}}
		{\GEta{147}{42}{\tau}\GEta{147}{49}{\tau}^{2}}
	-\frac{\GEta{147}{21}{\tau}\GEta{147}{35}{\tau}\GEta{147}{70}{\tau}}
		{\GEta{147}{28}{\tau}\GEta{147}{49}{\tau}\GEta{147}{63}{\tau}}
	-\frac{\GEta{147}{21}{\tau}\GEta{147}{70}{\tau}^{2}}
		{\GEta{147}{7}{\tau}\GEta{147}{49}{\tau}\GEta{147}{63}{\tau}}
	\\&\quad
	+\frac{\GEta{147}{21}{\tau}\GEta{147}{28}{\tau}\GEta{147}{70}{\tau}}
		{\GEta{147}{14}{\tau}\GEta{147}{42}{\tau}\GEta{147}{49}{\tau}}
	-\frac{\GEta{147}{14}{\tau}\GEta{147}{70}{\tau}}
		{\GEta{147}{35}{\tau}\GEta{147}{49}{\tau}}
	-\frac{\GEta{147}{28}{\tau}\GEta{147}{70}{\tau}}
		{\GEta{147}{49}{\tau}^{2}}
	\\&\quad
	+\frac{\GEta{147}{14}{\tau}\GEta{147}{21}{\tau}\GEta{147}{70}{\tau}}
		{\GEta{147}{28}{\tau}\GEta{147}{42}{\tau}\GEta{147}{49}{\tau}}
	+\frac{\GEta{147}{21}{\tau}\GEta{147}{56}{\tau}\GEta{147}{70}{\tau}}
		{\GEta{147}{7}{\tau}\GEta{147}{49}{\tau}\GEta{147}{63}{\tau}}
	-\frac{\GEta{147}{21}{\tau}\GEta{147}{70}{\tau}}
		{\GEta{147}{14}{\tau}\GEta{147}{63}{\tau}}
	\\&\quad
	-\frac{\GEta{147}{7}{\tau}\GEta{147}{21}{\tau}\GEta{147}{70}{\tau}}
		{\GEta{147}{35}{\tau}\GEta{147}{42}{\tau}\GEta{147}{49}{\tau}}
	+\frac{\GEta{147}{35}{\tau}\GEta{147}{70}{\tau}}
		{\GEta{147}{49}{\tau}\GEta{147}{56}{\tau}}
	+\frac{5}{21}\frac{\GEta{147}{56}{\tau}\GEta{147}{70}{\tau}^{2}}
		{\GEta{147}{35}{\tau}^{2}\GEta{147}{49}{\tau}}
	\\&\quad
	-\frac{5}{42}\frac{\GEta{147}{7}{\tau}\GEta{147}{21}{\tau}\GEta{147}{35}{\tau}}
		{\GEta{147}{42}{\tau}\GEta{147}{49}{\tau}\GEta{147}{70}{\tau}}
	+\frac{4}{21}\frac{\GEta{147}{14}{\tau}\GEta{147}{21}{\tau}\GEta{147}{70}{\tau}^{2}}
		{\GEta{147}{7}{\tau}^{2}\GEta{147}{49}{\tau}\GEta{147}{63}{\tau}}
	\\&\quad
	-\frac{2}{21}\frac{\GEta{147}{7}{\tau}\GEta{147}{28}{\tau}\GEta{147}{70}{\tau}}
		{\GEta{147}{14}{\tau}^{2}\GEta{147}{49}{\tau}}
	-\frac{1}{21}\frac{\GEta{147}{14}{\tau}\GEta{147}{21}{\tau}\GEta{147}{56}{\tau}\GEta{147}{70}{\tau}}
		{\GEta{147}{28}{\tau}^{2}\GEta{147}{42}{\tau}\GEta{147}{49}{\tau}}
	\\&\quad
	-\frac{1}{42}\frac{\GEta{147}{21}{\tau}\GEta{147}{28}{\tau}\GEta{147}{35}{\tau}\GEta{147}{70}{\tau}}
		{\GEta{147}{49}{\tau}\GEta{147}{56}{\tau}^{2}\GEta{147}{63}{\tau}}
	-\frac{5}{21}\frac{\GEta{147}{35}{\tau}^{3}\GEta{147}{70}{\tau}}
		{\GEta{147}{49}{\tau}\GEta{147}{56}{\tau}^{3}}
	\\&\quad
	-\frac{5}{42}\frac{\GEta{147}{21}{\tau}\GEta{147}{70}{\tau}^{4}}
		{\GEta{147}{35}{\tau}^{3}\GEta{147}{42}{\tau}\GEta{147}{49}{\tau}}
	-\frac{4}{21}\frac{\GEta{147}{7}{\tau}^{3}\GEta{147}{21}{\tau}}
		{\GEta{147}{49}{\tau}\GEta{147}{63}{\tau}\GEta{147}{70}{\tau}^{2}}
	-\frac{2}{21}\frac{\GEta{147}{14}{\tau}^{3}\GEta{147}{70}{\tau}}
		{\GEta{147}{7}{\tau}^{3}\GEta{147}{49}{\tau}}
	\\&\quad
	-\frac{1}{21}\frac{\GEta{147}{21}{\tau}\GEta{147}{28}{\tau}^{3}\GEta{147}{70}{\tau}}
		{\GEta{147}{14}{\tau}^{3}\GEta{147}{42}{\tau}\GEta{147}{49}{\tau}}
	-\frac{1}{42}\frac{\GEta{147}{21}{\tau}\GEta{147}{56}{\tau}^{3}\GEta{147}{70}{\tau}}
		{\GEta{147}{28}{\tau}^{3}\GEta{147}{49}{\tau}\GEta{147}{63}{\tau}}
	\\
	&=
	\frac{1}{7}
	\frac{\GEta{1}{0}{\tau}^{1/2}\GEta{147}{21}{\tau}\GEta{147}{70}{\tau}\GEta{49}{21}{\tau}}
		{\GEta{147}{0}{\tau}^{1/2}\GEta{49}{7}{\tau}\GEta{49}{14}{\tau}}
	+\frac{1}{7}
	\frac{\GEta{1}{0}{\tau}^{1/2}\GEta{147}{21}{\tau}\GEta{147}{70}{\tau}}
		{\GEta{147}{0}{\tau}^{1/2}\GEta{49}{7}{\tau}}
	\\&\quad
	+\frac{2}{7}
	\frac{\GEta{1}{0}{\tau}^{1/2}\GEta{147}{21}{\tau}\GEta{147}{70}{\tau}{\tau}\GEta{49}{14}{\tau}}
		{\GEta{147}{0}{\tau}^{1/2}\GEta{49}{7}{\tau}\GEta{49}{21}{\tau}}
	-\frac{\GEta{1}{0}{\tau}^{1/2}\GEta{147}{21}{\tau}\GEta{147}{70}{\tau}}
		{\GEta{147}{0}{\tau}^{1/2}\GEta{147}{21}{\tau}\GEta{147}{49}{\tau}\GEta{147}{70}{\tau}}
	\\&\quad
	-\frac{4}{7}
	\frac{\GEta{1}{0}{\tau}^{1/2}\GEta{147}{21}{\tau}\GEta{147}{70}{\tau}}
		{\GEta{147}{0}{\tau}^{1/2}\GEta{49}{14}{\tau}}
	+\frac{5}{7}
	\frac{\GEta{1}{0}{\tau}^{1/2}\GEta{147}{21}{\tau}\GEta{147}{70}{\tau}}
		{\GEta{147}{0}{\tau}^{1/2}\GEta{49}{21}{\tau}}
	\\&\quad
	-\frac{3}{7}
	\frac{\GEta{1}{0}{\tau}^{1/2}\GEta{147}{21}{\tau}\GEta{147}{70}{\tau}\GEta{49}{7}{\tau}}
		{\GEta{147}{0}{\tau}^{1/2}\GEta{49}{14}{\tau}\GEta{49}{21}{\tau}}
	+\frac{\GEta{1}{0}{\tau}^{1/2}\GEta{147}{21}{\tau}\GEta{147}{70}{\tau}}
		{\GEta{147}{0}{\tau}^{1/2}\GEta{147}{14}{\tau}\GEta{147}{49}{\tau}\GEta{147}{63}{\tau}}
.
\end{align*}
However each term is a modular function with respect to $\Gamma_1(147)$ and by
the valence formula it is sufficient to verify the identity in the 
$q$-expansion past $q^{773}$. Thus Theorem \ref{TheoremPP37} holds.

\section{Partition Pair Cranks}

We use $\ell(\pi)$ for the largest part of a partition,
$s(\pi)$ for the smallest part of a partition,
$\#(\pi)$ for the number of parts of a partition,
and $spt(\pi)$ for the number of occurrences of the smallest part of a partition.
For a partition pair $(\pi_1,\pi_2)$
we let $k(\pi_1,\pi_2)$ denote the number of parts of $\pi_1$
that are larger than $s(\pi_1)+\#(\pi_2)$. We note when $\pi_2$ is empty that
$k(\pi_1,\pi_2)$ is the number of parts of $\pi_1$ larger than the smallest part,
so that $spt(\pi_1)+k(\pi_1,\pi_2)=\#(\pi_1)$.
We define the following cranks on partition pairs,
\begin{align*}
	\mbox{paircrank}_1((\pi_1,\pi_2))
	&=	
		 spt(\pi_1) - 1 + k(\pi_1,\pi_2) - \#(\pi_2) 		
	,\\
	\mbox{paircrank}_2((\pi_1,\pi_2))
	&=
		 spt(\pi_1) - 2 +k(\pi_1,\pi_2) - \#(\pi_2) 		
	,\\
	\mbox{paircrank}_3((\pi_1,\pi_2))
	&=
		 spt(\pi_1) - s(\pi_1) - 1 +k(\pi_1,\pi_2) - \#(\pi_2) 		
	,\\
	\mbox{paircrank}_4((\pi_1,\pi_2))
	&=
		 spt(\pi_1) - s(\pi_1) +k(\pi_1,\pi_2) - \#(\pi_2) 		
.
\end{align*}
We let $PP_i$ denote the set of partition pairs counted by $pp_i$. That is,
we first let $PP_1$ denote the set of partition pairs $(\pi_1,\pi_2)$ where
$\pi_1$ is non-empty and if $\pi_2$ is non-empty then $s(\pi_1)\le s(\pi_2)$
and $\ell(\pi_2)\le 2s(\pi_1)$. Next $PP_2$ is the subset of $PP_1$
where $spt(\pi_1)\ge 2$, $PP_3$ is the subset of $PP_1$ where
$spt(\pi_1)\ge s(\pi_1)+1$, and $PP_4$ is the subset of $PP_1$ where
$spt(\pi_1)\ge s(\pi_1)$.
We claim $M_i(m,n)$ is the number of partition pairs of $n$ from $PP_i$ with
$\mbox{paircrank}_i$ equal to m. We prove this for $i=1$ and see the other 
three cases follow similarly. The only rearrangement of $q$-series we need is
the $q$-binomial theorem. This is in a similar fashion to the rearrangements 
for the original crank as in \cite{AndrewsGarvan} and for the
$\overline{\mbox{crank}}$ from Section 3.2 of \cite{GarvanJennings}.

We have
\begin{align*}
	PP_1(z,q)
	&=		
	\sum_{n=1}^\infty \frac{q^n\aqprod{q^{2n+1}}{q}{\infty}}
		{\aqprod{zq^n,z^{-1}q^n}{q}{\infty}}
	\\
	&=
	\sum_{n=1}^\infty \frac{q^n}{\aqprod{zq^n}{q}{\infty}}
	\sum_{k=0}^\infty \frac{\aqprod{zq^{n+1}}{q}{k}z^{-k}q^{nk}}{\aqprod{q}{q}{k}}
	\\
	&=
	\sum_{n=1}^\infty \frac{q^n}{\aqprod{zq^n}{q}{\infty}}
	+
	\sum_{n=1}^\infty\sum_{k=1}^\infty
		\frac{z^{-k}q^{n+kn}}{(1-zq^n)\aqprod{zq^{n+k+1}}{q}{\infty}\aqprod{q}{q}{k}}
	\\
	&=
	\sum_{n=1}^\infty \frac{q^n}{\aqprod{zq^n}{q}{\infty}}
	+
	\sum_{n=1}^\infty\sum_{k=1}^\infty
		\frac{q^{n}}{(1-zq^n)\aqprod{q^{n+1}}{q}{k} \aqprod{zq^{n+k+1}}{q}{\infty}}
		\cdot
		\frac{z^{-k}q^{kn}\aqprod{q}{q}{n+k}}{\aqprod{q}{q}{k}\aqprod{q}{q}{n}}
.
\end{align*}

The first series is the generating function for partitions where the power of 
$q$ counts the number being partitioned and the power of $z$ counts the 
one less than number of parts of the partition. This corresponds to 
$\mbox{paircrank}_1(\pi_1,\pi_2)$ when $\pi_2$ is empty.
For the second series, we interpret the summands as follows.
We have
$\frac{q^{n}}{(1-zq^n)\aqprod{q^{n+1}}{q}{k} \aqprod{zq^{n+k+1}}{q}{\infty}}$
is the generating function for partitions $\pi_1$ with smallest part $n$
with the power of $q$ counting the number being partitioned by $\pi_1$ and
the power of $z$ counting the number of parts of $\pi_1$ that are
either the smallest part (past the first occurrence of the smallest part)
or are at least $n+k+1$ in size. Since
$\frac{\aqprod{q}{q}{n+k}}{\aqprod{q}{q}{k}\aqprod{q}{q}{n}}$ is well known
to be the generating function for partitions into at most $k$ parts with largest
part at most $n$, we have
$\frac{z^{-k}q^{kn}\aqprod{q}{q}{n+k}}{\aqprod{q}{q}{k}\aqprod{q}{q}{n}}$
is the generating function for partitions $\pi_2$ into exactly $k$ parts
with smallest
part at least $n$ and largest part no more than $2n$ with the power of $q$
counting the number being partitioned by $\pi_2$ and the power of $z$ counting
the negative of the number of parts of $\pi_2$. Thus the second series 
corresponds to $\mbox{paircrank}_1(\pi_1,\pi_2)$ when $\pi_2$ is 
non-empty.

\section{Remarks}

We see this method is rather powerful, as from a single Bailey pair we get
a partition type function, congruences for that function, and a 
combinatorial refinement of those congruences. There is still the question of
how these functions behave from a modular perspective. It is not clear
if they also naturally
arise from considering certain modular or weak harmonic mass forms as has been
seen for the spt functions as in \cite{Bringmann} and \cite{BLO1}.  In another 
direction there is also the question of whether or not the series we have
dissected also arise as ranks for some types of partitions.

In a coming paper we find that these functions $PP_i(z,q)$ have representations
as Hecke-Rogers type double series, as was done in \cite{Garvan2} by 
Garvan for other spt cranks. These double series can also be used to prove
most of the congruences of this paper. However, they can not be used to prove
all of the congruences, so the methods of this paper are still of merit. Also,
while the dissections of this paper take some time to develop, the general 
identities can be reused for proving dissections of series of a similar form. 
Also in the coming paper we prove the corresponding results
for the Bailey pairs in groups C and E of Slater \cite{Slater}.
In papers after that, we will handle groups B, F, and G.

\bibliographystyle{abbrv}
\bibliography{BaileyGroupARef}

\end{document}